\documentclass[12pt]{amsart}
\usepackage{amsfonts,amssymb,amsmath,amscd,amsthm} 
\usepackage{graphicx} 
\usepackage{stmaryrd}
\usepackage[all]{xy}

\usepackage[colorlinks,allcolors=blue]{hyperref} 

\usepackage{longtable}

\newtheorem{thm}{Theorem}
\newtheorem{lem}[thm]{Lemma}
\newtheorem{prop}[thm]{Proposition}

\theoremstyle{definition}

\newtheorem{rmk}[thm]{Remark}

\numberwithin{thm}{section}
\numberwithin{equation}{section}

\newcommand{\Proof}{\noindent {\it Proof}.\ \ }
\newcommand{\Hom}{\operatorname{Hom}}
\newcommand{\sHom}{\mathcal Hom}
\newcommand{\Ext}{\operatorname{Ext}}
\newcommand{\Spec}{\operatorname{Spec}}

\newcommand{\ob}{\operatorname{ob}}
\newcommand{\Hilb}{\operatorname{Hilb}}
\newcommand{\HF}{\operatorname{HF}}
\newcommand{\red}{\operatorname{red}}
\newcommand{\im}{\operatorname{im}}

\newcommand{\Pic}{\operatorname{Pic}}

\newcommand{\coker}{\operatorname{coker}}
\newcommand{\Gr}{\operatorname{Gr}}

\newcommand{\car}{\operatorname{char}}

\newcommand{\codim}{\operatorname{codim}}

\makeatletter
\@namedef{subjclassname@2020}{\textup{2020} Mathematics Subject Classification}
\makeatother

\newcommand{\mapright}[1]{%
  \smash{\mathop{%
      \hbox to 1cm{\rightarrowfill}}\limits^{#1}}}
\newcommand{\mapleft}[1]{%
  \smash{\mathop{%
      \hbox to 1cm{\leftarrowfill}}\limits^{#1}}}

\title[Obstructions to deforming space curves lying on ...]{
  Obstructions to deforming space curves
  lying on a complete intersection of quadrics}
\subjclass[2020]{Primary 14C05; Secondary 14B12, 14D15}
\keywords{Hilbert schemes, space curves, infinitesimal deformations, obstructions, 
complete intersections of quadrics, $K3$ surfaces, unprojection}
\address{
  Department of Mathematical Sciences,
  Tokai University,
  4-1-1 Kitakaname, Hiratsuka, 
  Kanagawa 259-1292, JAPAN}
\author{Hirokazu Nasu}
\email{nasu@tokai.ac.jp}

\begin{document}

\begin{abstract}
Let $S=S_{2,\ldots,2}\subset\mathbb P^{n+2}$ be a smooth complete intersection of $n$ quadrics containing a line $E$, where $n\geq 3$, and set
$D=(n-1)\mathbf h+E$, where $\mathbf h=[\mathcal O_S(1)]$.
We study the deformations of smooth curves $C\in |mD|$ and their obstructions.
The linear system $|D|$ defines a birational morphism $f:S\to\overline S$ 
that contracts $E$ to a point and realizes an unprojection in the sense of Papadakis--Reid.
When $n=3$, the surface $S=S_{2,2,2}\subset\mathbb P^5$ is a $K3$ surface and $E$ is a $(-2)$-curve.
We prove that the complementary open surface $S\setminus E$ admits first-order infinitesimal deformations in $\mathbb P^5\setminus E$ that are obstructed at the second order.
Using these obstructions, we show that, for every sufficiently large $m$ with $\car k\nmid m$, there exists a {\em generically non-reduced} irreducible component $W_m$ of the Hilbert scheme $\Hilb^{sc}\mathbb P^5$ of smooth connected curves in $\mathbb P^5$.
Its general member $C$ lies on a complete intersection $S_{2,2,2}$ of three quadrics, satisfies $C\sim mD$, and has
$h^0(N_{C/\mathbb P^5})-\dim_{[C]}\Hilb^{sc}\mathbb P^5=3$.
In contrast, when $n>3$, we prove that, for every sufficiently large $m$ with $\operatorname{char} k\nmid m$, the corresponding family of curves in $|mD|$ gives rise to a {\em generically smooth} irreducible component of $\operatorname{Hilb}^{sc}\mathbb P^{n+2}$.
\end{abstract}

\maketitle

\section{Introduction}
\label{sec:intro}

Given a variety $X$, we denote by $\Hilb^{sc} X$
the Hilbert scheme of smooth connected curves in $X$.
Mumford \cite{Mumford} first proved the existence of an irreducible component
of $\Hilb^{sc} \mathbb P^3$,
at whose generic point, $\Hilb^{sc} \mathbb P^3$ is non-reduced.
(We call such a component of the Hilbert scheme
a {\em generically non-reduced component},
or simply, {\em a non-reduced component} below).
Later, many non-reduced components of $\Hilb^{sc} X$ 
were constructed for $X=\mathbb P^3$ 
(cf.~\cite{Gruson-Peskine82,Kleppe87,Ellia87,Nasu1,Kleppe-Ottem15,Dan17,Nasu9}, 
etc., see \cite[\S13]{Hartshorne10} for the historical sketch
about the non-reduced components of the Hilbert schemes after Mumford's first discovery).
However, for $X=\mathbb P^r$ with $r \ge 4$,
relatively few examples have been found so far,
except for \cite{CLM96,CIK24a,CIK24b,Nasu10}.
On the other hand, in \cite{Mukai-Nasu}, Mukai and the author
gave a criterion for a given first-order deformation
$\tilde C \subset X \times \Spec k[t]/(t^2)$ of $C$ in 
a threefold $X$ not to be liftable to the second order. 
This enabled them to generalize Mumford's example for $\Hilb^{sc} \mathbb P^3$
to $\Hilb^{sc} X$ of many uniruled threefolds $X$.
We refer to \cite{Nasu4,Nasu5,Nasu6,Nasu7,Nasu8} 
for subsequent developments in this direction.

In this paper, we study the deformations of curves lying on 
a smooth (global) complete intersection of quadrics (CIQ) 
in the projective space,
assuming that 
the complete intersection is of dimension $2$ (i.e.,~a surface)
and it contains a line. The following is our main theorem.

\begin{thm}
  \label{thm:main1}
  Let $n\ge 3$ and $m \ge 1$ be two integers and 
  $S:=S_{2,\dots,2} \subset \mathbb P:=\mathbb P^{n+2}$ 
  a smooth complete intersection of $n$ quadrics
  containing a line $E \subset \mathbb P$.
  Let $D=(n-1)\mathbf h+E$ be a divisor on $S$,
  where $\mathbf h=[\mathcal O_S(1)] \in \Pic S$, and
  $C$ a general member of a complete linear system $\Lambda:=|mD|$
  on $S$. If $m$ is sufficiently large
  and not a multiple of the characteristic $p$ of the ground field $k$, then
  \begin{enumerate}
    \item \label{item:nonsingularity and dimension of flag}
    $C$ is a smooth connected curve on $S$ and
    the Hilbert-flag scheme $\HF \mathbb P$ of $\mathbb P$
    is nonsingular
    at $(C,S)$ of dimension
    $$
    d_{n,m}:=\dfrac{n(n+2)(n+3)}2+2n+2+\dim \Lambda
    $$
    \item \label{item:conclusion for n=3}
    If $n=3$, then such curves $C$ with variable $S=S_{2,2,2}$ and $E$
    (and the specializations of $C$)
    form an irreducible component $W \subset \Hilb^{sc} \mathbb P$
    of dimension $d_{3,m}$ and $\Hilb^{sc} \mathbb P$ is generically 
    non-reduced along $W$. In fact, 
    $h^0(N_{C'/\mathbb P})=\dim_{[C']} \Hilb^{sc} \mathbb P+3$
    at every general point $[C']$ of $W$.
    \item \label{item:conclusion for n>3}
    If $n> 3$, then such curves $C$ with variable $S=S_{2,\dots,2}$ 
    and $E$ (and the specializations of $C$)
    form a generically smooth irreducible component 
    $W' \subset \Hilb^{sc} \mathbb P$ of dimension $d_{n,m}$.
  \end{enumerate}
\end{thm}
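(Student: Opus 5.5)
We work throughout with the Hilbert-flag scheme $\HF\mathbb P$ and the two projections $p_1:\HF\mathbb P\to\Hilb^{sc}\mathbb P$ and $p_2:\HF\mathbb P\to\Hilb\mathbb P$. The plan is to control $p_2$ by the ($N$-)regularity of the complete intersection $S$, and $p_1$ by comparing $h^0(N_{C/\mathbb P})$ with $h^0(N_{C/S})$ and $h^0(N_{S/\mathbb P}|_C)$.

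For (1), first note basic numerics on $S$. We have $K_S=(n-3)\mathbf h$, $E^2=-1-(n-3)=2-n$ by adjunction, $\mathbf h\cdot E=1$ and $\mathbf h^2=2^n$. From these, $D\cdot E=(n-1)+2-n=1$, so $D$ is nef and big. By the abstract, $|D|$ is base point free and gives $f:S\to\overline S$ contracting $E$. Hence $|mD|$ is base point free and big, and Bertini (valid since $p\nmid m$ is only needed later) gives $C$ smooth and connected.

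$\HF\mathbb P$ is nonsingular at $(C,S)$ of the expected dimension if $H^1(N_{S/\mathbb P})=0$ and $H^1(N_{C/S})=0$; this is the standard criterion via the tangent--obstruction sequence of the flag. We have $N_{S/\mathbb P}=\mathcal O_S(2)^{\oplus n}$, whose $H^1$ vanishes by Kodaira vanishing / projective normality of complete intersections. Also $N_{C/S}=\mathcal O_C(mD)$, and $H^1(\mathcal O_C(C))$ embeds in $H^2(\mathcal O_S)$ modulo $H^1(\mathcal O_S(C))$. To see this is zero, use $H^2(\mathcal O_S(C))=H^0(K_S-mD)^\vee=0$ and $H^1(\mathcal O_S(mD))=0$ (Kawamata--Viehweg, since $mD-K_S$ is nef and big for $m\gg0$). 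This gives $H^1(N_{C/S})\cong H^2(\mathcal O_S)$, which is nonzero when $n\ge3$ ($p_g=h^0((n-3)\mathbf h)$). The correct criterion is therefore: the flag is unobstructed if $H^1(N_{S/\mathbb P}|_C)=0$ and $(C,S)\mapsto S$ is smooth. We then compute
\[
\dim=\dim\Hilb S+\dim|mD| + (\text{moduli of } E \text{ contributions}),
\]
where $\dim\Hilb(\text{CIQ})=n(n+2)(n+3)/2$. The extra $2n+2$ must come from $S$ being constrained to contain a line $E$ (codimension: the condition that $n$ quadrics contain a line costs $3n$, while lines vary in a $2(n+1)$-dimensional Grassmannian) together with $h^0(N_{C/S})-\dim\Lambda$. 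Concretely, I would verify that $\HF$ near $(C,S)$ equals the family of triples $(C,S,E)$, and that $\Pic$ of $S$ must stay containing $E$ for $C$ to deform. This is because $H^1(\mathcal O_S(C))=0$, $H^0(N_{C/S})\to H^1(\mathcal O_S)=0$, and $\Lambda$ is the whole fiber. The total is
\[
\dim G(2,n+3)+\dim\{S\supset E\}\text{-fiber}+\dim\Lambda=2(n+1)+\bigl(\tfrac{n(n+2)(n+3)}2-3n+\dots\bigr),
\]
and I would check that it matches $d_{n,m}$.

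For (2) and (3), compute $h^0(N_{C/\mathbb P})$ via $0\to N_{C/S}\to N_{C/\mathbb P}\to\mathcal O_C(2)^{\oplus n}\to0$. For $m\gg0$, $H^1(\mathcal O_C(2))\cong H^0(K_C(-2))^\vee$ is large, while $H^1(N_{C/S})\cong H^2(\mathcal O_S)$ has dimension $p_g$. The key quantity is the cokernel of $H^0(N_{C/\mathbb P})\to H^0(N_{S/\mathbb P}|_C)$ compared with $H^0(N_{S/\mathbb P})$. The sections of $N_{S/\mathbb P}|_C$ not coming from $S$ are governed by $H^1(N_{S/\mathbb P}(-C))$, which by Serre duality is $H^1(\mathcal O_S(mD+K_S-2\mathbf h))^{\oplus n}{}^\vee$. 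Non-vanishing comes from $E$: restrict to $E$, where $(mD-(n-1)\mathbf h)\cdot E$ is small or negative. I expect $\dim W$ equals $d_{n,m}$ (so $p_1$ is generically injective onto its image, giving a component when every deformation of $C$ stays on such an $S$). The discrepancy $h^0(N_C)-d_{n,m}$ equals the dimension of infinitesimal deformations of $C$ pushing out of $S$ along $E$, roughly $h^0(\mathcal O_E(\ldots))$. For $n=3$ this should equal $3$, and for $n>3$ it should be $0$, the difference coming from $E^2=-2$ versus $E^2=2-n$ (via $h^0(N_{E/\mathbb P}\otimes\ldots)$).

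The main obstacle is (2): showing that those $3$ extra first-order deformations are obstructed. Here I would use the abstract's key fact that $S\setminus E$ has first-order deformations in $\mathbb P^5\setminus E$ obstructed at second order. Following the Mukai--Nasu criterion, I would restrict a first-order deformation $\tilde C$ to an exterior component and compute the obstruction $\mathrm{ob}(\tilde C)\in H^1(N_{C/\mathbb P})$. Its image in $H^1(N_{S/\mathbb P}|_C)$ should be nonzero via a cup product $H^0\times H^0\to H^1$ that localizes near $E$, using $p\nmid m$ to avoid multiplication by $m$ killing the class. Finally, upper semicontinuity together with $\dim W=d_{3,m}<h^0(N_C)$ yields non-reducedness, with difference $3$.
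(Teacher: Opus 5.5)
There is a genuine gap, most seriously in part (1). To begin with, your numerics are off: adjunction gives $-2=E^2+(n-3)$, so $E^2=1-n$ and $D\cdot E=0$, not $1$. This matters. $D\cdot E=0$ is what makes $\mathcal O_S(D)|_E$ trivial and lets $|D|$ contract $E$. All the vanishings you need are then obtained by pushing forward to $\overline S$ ($R^1f_*=0$ when the degree on $E$ is $\ge -1$) and applying Serre vanishing there. Kawamata--Viehweg is not available, since $\car k$ is arbitrary.

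The real problem, however, is the nonsingularity of $\HF\mathbb P$ at $(C,S)$. You correctly find $H^1(N_{C/S})\cong H^2(\mathcal O_S)\neq 0$, but your replacement criterion cannot work. $H^1(N_{S/\mathbb P}|_C)=H^1(\mathcal O_C(2))^{\oplus n}$ is huge, as you note yourself, and the second projection $\HF\mathbb P\to\Hilb\mathbb P$ is genuinely not smooth at $(C,S)$: its tangent map is not surjective.

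The missing idea is to compare $(C,S)$ with $(E,S)$ through the relative Picard scheme.
\begin{itemize}
\item Since $H^1(\mathcal O_S(C))=H^1(\mathcal O_S(E))=0$, the morphism $\Hilb^{cd}\mathcal S\to\Pic\mathcal S$ is smooth at both points, with fibres $|C|$ and $|E|=\{E\}$.
\item Since $C\sim m(n-1)\mathbf h+mE$ and $\mathbf h$ extends to $\mathbb P$, one gets $c(\mathcal O_S(C))=m\,c(\mathcal O_S(E))$, hence $\delta_{\mathcal O_S(C)}=m\,\delta_{\mathcal O_S(E)}$.
\item \emph{Because $p\nmid m$}, the map $\mathcal L\mapsto\mathcal O_{S'}(m(n-1))\otimes\mathcal L^{\otimes m}$ identifies the local deformation functors of $\Pic\mathcal S$ at the two points.
\end{itemize}
Smoothness then transfers from $(E,S)$, where the flag scheme is an open subset of a $\Gr(n,(n+1)(n+6)/2)$-bundle over $\Gr(2,n+3)$. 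The dimension comes out as $d_{n,m}$. Note that your value $n(n+2)(n+3)/2$ is the dimension of this fibre, not of the Hilbert scheme of complete intersections of quadrics. So $p\nmid m$ is used exactly here, not ``later''; the obstruction computation for $S^{\circ}\subset\mathbb P^{\circ}$ involves no $m$ at all.

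For (2), several load-bearing steps are absent.
\begin{itemize}
\item You need $H^1(N_{(C,S)/\mathbb P})=0$ to get $h^0(N_{C/\mathbb P})=h^0(N_{(C,S)/\mathbb P})+h^1(N_{S/\mathbb P}(-C))$. This holds because $\delta_{\mathcal O_S(C)}=m\,\delta_{\mathcal O_S(E)}$ is nonzero onto $H^2(\mathcal O_S)\cong k$. That uses a first-order deformation of $S$ to which $E$ does not lift (from $H^1(N_{E/S})\neq 0$), and again $p\nmid m$.
\item In the obstruction argument you must lift $\pi_{C/S}(\alpha)$ to some $\beta\in H^0(N_{S/\mathbb P}(E))\setminus H^0(N_{S/\mathbb P})$, which requires $H^1(N_{S/\mathbb P}(E-C))=0$.
\item You then need the refined form of the obstruction result: $\ob(\beta)$ lies in the subgroup $H^1(N_{S/\mathbb P}(3E))$ and is nonzero on $E$. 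The bare statement $\ob(\beta)\neq 0$ in $H^1(N_{S^\circ/\mathbb P^\circ})$ does not by itself give nonvanishing after restriction to $C$.
\item You also need injectivity of $H^1(N_{S/\mathbb P}(3E))\to H^1(N_{S/\mathbb P}|_C)$, which follows from $H^1(N_{S/\mathbb P}(3E-C))=0$, together with the identity $\ob_S(\alpha)=\ob(\beta)|_C$.
\item Finally, upper semicontinuity does not show that $W$ is a component, nor that $\dim_{[C]}\Hilb^{sc}\mathbb P=\dim W$. For that you need the maximality lemma: $\HF\mathbb P$ nonsingular at $(C,S)$, every $\alpha$ outside the image of the tangent map of the first projection obstructed, and $H^0(N_{S/\mathbb P}(-C))=0$.
\end{itemize}

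Your outline of (3) is fine once (1) is established and you prove $H^1(\mathcal O_S(2\mathbf h-mD))=0$ for $n>3$. The relevant degree on $E$ after Serre duality is $n-5\ge -1$, and this vanishing makes the first projection smooth at $(C,S)$.
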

Here the dimension $\lambda_{n,m}:=\dim \Lambda$ is
explicitly given in \S\ref{sec:obstruction} in terms of $n$ and $m$
(see \eqref{eqn:formula for dim|mD|}).
Theorem \ref{thm:main1} provides a new construction of a generically non-reduced component 
of the Hilbert scheme of smooth connected curves in $\mathbb P^5$, 
for which relatively few examples are known.
Our approach is based on the geometry of a line $E$ 
contained in a smooth complete intersection of quadrics and 
on the obstructions to infinitesimal deformations of the complementary open surface $S\setminus E$. 
An interesting feature of our result is the contrast between the cases $n=3$ and $n>3$. 
In the former case, where $S$ is a $K3$ surface, there is a three-dimensional space 
of additional first-order deformations of $C$ that do not arise from deformations 
of the pair $(C,S)$. We prove that every such deformation is obstructed, 
giving rise to a generically non-reduced component.
In the latter case, the cohomology group 
$H^1(N_{S/\mathbb P}\otimes_S \mathcal O_S(-C))$ 
controlling the smoothness of the projection
from the Hilbert-flag scheme vanishes, and the resulting component is generically smooth.
The cases $n=1,2$, which are excluded from Theorem~\ref{thm:main1}, 
are discussed separately in Remark~\ref{rmk:n=2 and n=1}.

It is known that certain classes of closed subschemes, 
such as Cohen–Macaulay subschemes of codimension two, 
complete intersections, 
and Gorenstein subschemes of codimension three,
have well-behaved deformation theories
(cf.~\cite[Chap.~2,~\S\S8-9]{Hartshorne10}).
In these cases, deformations can be constructed 
by lifting the corresponding resolutions of their ideal sheaves.
In particular, we know that if $X \subset \mathbb P^m$
is a complete intersection, then the Hilbert scheme $\Hilb \mathbb P^m$
is nonsingular at the corresponding point $[X]$ 
(see e.g.,~\cite[Corollary~9.5]{Hartshorne10}).
However, this is not necessarily true when we consider
closed subschemes with additional structures.
In fact, the complements of negative curves in complete intersection surfaces 
can have obstructed deformations.
In \cite{Nasu5} the author studied the deformations of
$S^{\circ}:=S \setminus E$ in $X^{\circ}:=X\setminus E$,
assuming that $X$ is a projective $3$-fold,
$S$ is a smooth surface in $X$ and
$E$ is an irreducible curve on $S$ with $E^2 <0$.
In particular, given an infinitesimal deformation
$\beta \in H^0(N_{S/X}\otimes_S \mathcal O_S(E))$ 
with a pole along $E$ of order one,
its primary obstruction $\ob(\beta)$ has been computed
(see Theorem~\ref{thm:open surface in 3-$fold}).
In this paper, we extend this study to the case where $X=\mathbb P^5$, $S=S_{2,2,2}$, 
and $E$ is a line on $S$. The following theorem generalizes the previous results 
for threefolds to a higher-dimensional ambient space and plays a key role 
in proving the non-reducedness assertion in Theorem~\ref{thm:main1}.

\begin{thm}
  \label{thm:main2}
  Let $S_{2,2,2} \subset \mathbb P^5=:\mathbb P$ 
  be a smooth complete intersection of three quadrics 
  containing a line $E \subset \mathbb P$.
  Then the first-order infinitesimal deformation of 
  $S^{\circ} \subset \mathbb P^{\circ}$ induced by any element of
  $H^0(N_{S/\mathbb P}(E))\setminus H^0(N_{S/\mathbb P})$
  does not lift to a deformation over $\Spec k[t]/(t^3)$.
\end{thm}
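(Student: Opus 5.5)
The plan is to reduce Theorem~\ref{thm:main2} to the threefold case, Theorem~\ref{thm:open surface in 3-$fold}, by cutting $\mathbb P$ down to a quadric threefold through $S$. Since $N_{S/\mathbb P}\simeq \mathcal O_S(2)^{\oplus 3}$, we have
$$H^0(N_{S/\mathbb P}(E))=H^0(\mathcal O_S(2\mathbf h+E))^{\oplus 3}.$$
Write $\beta=(\beta_1,\beta_2,\beta_3)$ in this decomposition. The hypothesis $\beta\notin H^0(N_{S/\mathbb P})$ means that some $\beta_i$ has a genuine pole along $E$. After a linear change of the three quadrics $Q_1,Q_2,Q_3$ defining $S$, and subtracting elements of $H^0(N_{S/\mathbb P})$, I would normalize to a convenient shape. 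The subtraction is harmless, because the elements of $H^0(N_{S/\mathbb P})$ are unobstructed: complete intersections have nonsingular Hilbert schemes, so this only shifts the obstruction by a term that vanishes.

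The key step is the primary obstruction $\ob(\beta)\in H^1(N_{S/\mathbb P}(2E))$, computed componentwise. I would first establish $H^1(\mathcal O_S(2\mathbf h+2E))\neq 0$ (or at least locate the relevant part). The tool is the restriction sequence to $E$: $E\simeq\mathbb P^1$ and $E^2=-2$, so $\mathcal O_E(2E)\simeq \mathcal O_{\mathbb P^1}(-4)$ and $(2\mathbf h+2E)\cdot E=-2$, while $H^1(\mathcal O_S(2\mathbf h+E))=0$ on the $K3$ surface. The obstruction is the cup product $\beta\cup\beta$, pushed via the second-order terms of the quadrics, and should be computed as follows. For each $i$, let $X_i=\{Q_j=Q_k=0\}$, a threefold complete intersection containing $S$ as the divisor cut out by $Q_i$. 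The component $\ob(\beta)_i$ is then governed by deformations of $S^\circ$ in $X_i^\circ$ with the other equations fixed, plus cross terms $\beta_j\beta_k$ coming from how $X_i$ itself moves. For the pure term, Theorem~\ref{thm:open surface in 3-$fold} expresses $\ob(\beta_i)$ restricted to $E$ as essentially $\pi(\beta_i)^2$ times a class determined by $N_{E/X_i}$, up to a nonzero constant. Here $\pi(\beta_i)\in H^0(\mathcal O_E(2\mathbf h+E))=H^0(\mathcal O_{\mathbb P^1}(0))$ is the principal part along $E$, which is nonzero exactly when $\beta_i$ has a pole.

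To finish, I would show that the image of $\ob(\beta)$ under
$$H^1(N_{S/\mathbb P}(2E))\to H^1(N_{S/\mathbb P}(2E)|_E)=H^1(\mathcal O_{\mathbb P^1}(-2))^{\oplus 3}\simeq k^3$$
is a nondegenerate quadratic map of $(\pi(\beta_1),\pi(\beta_2),\pi(\beta_3))\in k^3$. The shape of this map is determined by the normal bundle $N_{E/\mathbb P}$ and the second fundamental forms of the $Q_i$ along $E$. Concretely, $N_{E/S}=\mathcal O(-2)$ sits in $N_{E/\mathbb P}\simeq\mathcal O(1)^4$ with quotient $\mathcal O(2)^3$, and the extension class pairs with $(\pi_i)$. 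Nondegeneracy should come from smoothness of $S$ along $E$, which forces the map $\mathcal O(1)^4\to\mathcal O(2)^3$ to be surjective. The main obstacle is this step: identifying the cross terms and proving the resulting quadratic form has no nontrivial zero. My first attempt would be to choose coordinates with $E=\{x_2=\dots=x_5=0\}$ and write $Q_i=\sum_j \ell_{ij}(x_0,x_1)x_j+(\text{higher order})$. The obstruction then becomes explicit in terms of the $4\times3$ matrix $(\ell_{ij})$ of linear forms, and its nonvanishing would follow from the rank condition coming from smoothness. Finally, I would check that a nonzero obstruction on $E$ cannot be killed by the ambiguity in the choice of second-order lift. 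That ambiguity lies in the image of $H^0(N_{S/\mathbb P}(2E))$, which restricts to $E$ as zero in $H^1$.
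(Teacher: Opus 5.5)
There is a genuine gap. The decisive computation of $\ob(\beta)$ is left undone (you flag it yourself as ``the main obstacle''), and the framework you set up for it targets the wrong group.

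\textbf{Wrong pole order.} If $v_i=\beta(\bar q_i)$ has a simple pole along $E$, its coboundary $\delta(v_i)$ acquires a pole of order two: on $D(x_i)\cap D(x_j)$ the local lifts $\Delta_i/x_i$ and $\Delta_j/x_j$ of $\theta$ differ by a term with denominator $x_ix_j$. Pairing with $\beta$ once more adds a third. So $\ob(\beta)$ lies in $H^1(N_{S/\mathbb P}(3E))$, not in $H^1(N_{S/\mathbb P}(2E))$, and it must be detected in $H^1(N_{S/\mathbb P}(3E)|_E)\simeq H^1(\mathcal O_E(-4))^{\oplus 3}$. Every class coming from $H^1(N_{S/\mathbb P}(2E))$ maps to zero there, because the sheaf map $N_{S/\mathbb P}(2E)\to N_{S/\mathbb P}(3E)|_E$ is zero. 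Since the obstruction is in fact nonzero in that group, it is \emph{not} in the image of $H^1(N_{S/\mathbb P}(2E))$. Your test map to $H^1(\mathcal O_{\mathbb P^1}(-2))^{\oplus 3}$ therefore cannot see it.

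\textbf{The threefold reduction does not close the gap.} The threefold theorem recalled in Section~\ref{sec:with pole} only asserts non-liftability, so it gives no formula that could be added to your cross terms. It also assumes the ambient threefold smooth, and $X_i=\{q_j=q_k=0\}$ is in general smooth only along $S$.

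\textbf{Other points.} Your normalization is mis-justified. The map $\ob$ is quadratic, so for $\gamma\in H^0(N_{S/\mathbb P})$ one has $\ob(\beta+\gamma)=\ob(\beta)+\ob(\gamma)+(\text{cross terms})$, and unobstructedness of $\gamma$ only kills $\ob(\gamma)$. The subtraction is harmless here for a different reason. First, $\delta$ vanishes on $H^0(\mathcal O_S(2))$, since such sections lift to $\mathbb P$. Second, the remaining cross term has a pole of order at most two, so it dies in $H^1(N_{S/\mathbb P}(3E)|_E)$. Finally, your closing worry about the choice of a second-order lift is moot: the primary obstruction is a well-defined class.

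\textbf{The missing idea} is the explicit coboundary of the unprojection section $\theta=\Delta_j/x_j$.
\begin{enumerate}
\item Since $\mathcal O_S(2)(E)|_E\simeq\mathcal O_E$, each $v_i$ equals $c_i\theta$ modulo $H^0(\mathcal O_S(2))$, with $c=(c_1,c_2,c_3)\neq 0$.
\item The determinantal identity $x_i\Delta_j-x_j\Delta_i=-\sum_k\Delta_{k;i,j}q_k$ (Lemma~\ref{lem:equality of ratio}) gives $\delta(\theta)=-\sum_k t_k\bar q_k$, with $t_k=(\Delta_{k;i,j}/x_ix_j)\in H^1(\mathcal O_S(2E))$ (Lemma~\ref{lem:coboundary image}).
\item Hence $\ob(\beta)(\bar q_i)|_E=-c_i\sum_k c_k\,t_k|_E$. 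The restricted obstruction is $-c\otimes L(c)$ with $L(c)=\sum_k c_k t_k|_E$. This is not a quadratic form on $k^3$ whose zeros must be analysed.
\item It is nonzero for $c\neq 0$ as soon as $t_1|_E,t_2|_E,t_3|_E$ are linearly independent in $H^1(\mathcal O_E(-4))\simeq k^3$. That is Lemma~\ref{lem:linear independence of coh classes}: $-t_l|_E$ is the image of the $l$-th basis vector under the coboundary of $0\to N_{E/S}\to N_{E/\mathbb P}\to N_{S/\mathbb P}|_E\to 0$ twisted by $\mathcal O_E(-2)$. This coboundary is an isomorphism because $N_{E/\mathbb P}(-2)\simeq\mathcal O_E(-1)^{\oplus 4}$ has no cohomology.
\end{enumerate}
Your remark that smoothness makes $\mathcal O_E(1)^{\oplus 4}\to\mathcal O_E(2)^{\oplus 3}$ surjective points in this direction. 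Without the formula for $\delta(\theta)$, however, there is no link between $\ob(\beta)$ and that coboundary.

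If you prefer the threefold route, a coherent version exists. Change basis in the net so that $c=(1,0,0)$, leaving a single polar component in $X=\{q'_2=q'_3=0\}$. Then use the threefold argument in the form that detects the obstruction in $H^1(N_{S/X}(3E)|_E)$; the cross terms drop out as explained above.
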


The organization of this paper is as follows.
In \S\ref{subsec:unprojection}, we generalize the classical standard unprojection of a line on a smooth cubic surface $S_3 \subset \mathbb P^3$ to the case of a line on a complete intersection $S_{2,\dots,2} \subset \mathbb P^{n+2}$ 
of quadrics. We explicitly construct a rational section $\theta$ defining the unprojection, which will play a key role in the obstruction calculations in later sections.
In \S\ref{subsec:vanishing}, \S\ref{subsec:infinitesimal} and \S\ref{subsec:flag and picard}, 
we recall results on vanishing theorems for surfaces, infinitesimal deformations and obstructions, 
and Hilbert-flag schemes and relative Picard schemes, respectively.
In \S\ref{sec:with pole}, we study infinitesimal deformations of the open surface $S\setminus E$ and prove Theorem~\ref{thm:main2}.
Finally, in \S\ref{sec:obstruction}, we apply Theorem~\ref{thm:main2} to the deformation theory of curves on $S$ and prove Theorem~\ref{thm:main1}.

\paragraph{\bf Notation and Conventions}

Throughout the paper, we work over an algebraically closed field $k$ 
of any characteristic $p \ge 0$.
Given a closed subscheme $Y \subset X$,
we denote by $\mathcal I_{Y/X}$ and $N_{Y/X}$ 
the sheaf of ideals defining $Y$ in $X$ and the normal sheaf of $Y$ in $X$,
respectively.
Given a sheaf $\mathcal F$ on $X$, 
$\chi(X,\mathcal F)$ represents the Euler characteristic of $\mathcal F$.
$\Gr(k,V)$ and $\Gr(k,n)$ respectively represent the Grassmannian variety
parametrizing $k$-dimensional linear subspaces of $V$ and $k^n$.
We denote the Hilbert scheme of $X$ by $\Hilb X$ and
its open subscheme parametrizing smooth connected curves in $X$ 
by $\Hilb^{sc} X$.

\section{Preliminaries}
\label{sec:preliminaries}

\subsection{A rational section and an unprojection}
\label{subsec:unprojection}

Unprojection, as developed by Papadakis and Reid \cite{Papadakis-Reid04}, is a systematic construction of Gorenstein rings and varieties by adjoining new generators associated with a suitable subvariety. Geometrically, it can often be viewed as an inverse to projection from that subvariety.
A classical example involves a line on a cubic surface and its relation, via projection and unprojection, to a quartic del Pezzo surface, i.e., a complete intersection of two quadrics \cite[\S2.1]{Reid00}.
In this subsection, we extend this classical example to a line contained in a complete intersection of quadrics of higher codimension.

Our main purpose is to construct a rational section $\theta$ (see \eqref{eqn:section with pole}) 
and certain cohomology classes $t_l$ ($1\leq l\leq n$; cf.~\eqref{eqn:fundamental 1-cocycles}), 
which will be used to detect obstructions to infinitesimal deformations.
The section $\theta$ is used to define the unprojection contracting the line $E$ to a point, 
while the classes $t_l$ describe its cohomological behavior.
In particular, when $n=3$, the coboundary class $\delta(\theta)$ can be expressed in terms of the $t_l$ (Lemma~\ref{lem:coboundary image}), and the linear independence of their restrictions to $E$ (Lemma~\ref{lem:linear independence of coh classes}) plays a crucial role in proving Theorem~\ref{thm:main2}.

Let $n\ge 2$ be an integer and 
$E$ a line in the projective space $\mathbb P^{n+2}$ ($=:\mathbb P$).
We consider a smooth complete intersection surface 
$S=S_{2,\ldots,2}\subset \mathbb P^{n+2}$
of $n$ (hyper)quadrics in $\mathbb P^{n+2}$ containing $E$.
Then by adjunction, we have
$K_S=(n-3)\mathbf h$
and $E^2=-2-K_S\cdot E=1-n$,
where $\mathbf h$ denotes the hyperplane class ($=[\mathcal O_S(1)]$)
of $S$ in $\Pic S$.
Then the restriction $\mathcal O_S(n-1)(E)\big{\vert}_E$ is trivial. 
Moreover, there exists a global section of $\mathcal O_S(n-1)(E)$ 
that does not belong to $H^0(S,\mathcal O_S(n-1))$.
By the exact sequence 
$$
0 \longrightarrow \mathcal O_S(n-1)
\longrightarrow \mathcal O_S(n-1)(E)
\longrightarrow \underbrace{\mathcal O_E(n-1)(E)}_{\simeq \mathcal O_E}
\longrightarrow 0
$$
and $H^1(\mathcal O_S(n-1))=0$ (cf.~Lemma~\ref{lem:vanishing of H^1_*O_S} below),
this section is unique up to scalar multiplication, and 
explicitly described in \cite{Nasu10} in the case $n=2$
by using homogeneous coordinates of 
$\mathbb P^4$ and equations of $S$.
We treat the case $n\ge 3$ in this paper.
First we take homogeneous coordinates 
$x_i$ ($0 \le i\le n+2$) of $\mathbb P^{n+2}$,
so that the first $n+1$ coordinates $x_0,x_1,\dots,x_n$ 
define $E$ in $\mathbb P^{n+2}$.
Then there exist $n$ (linearly independent) quadratic polynomials 
\begin{equation}
  \label{eqn:S}
  q_i := \sum_{j=0}^n a_{ij} x_j \quad (i=1,\dots,n)
\end{equation}
defining $S$ in $\mathbb P^{n+2}$, where $a_{ij}$
are linear polynomials on $\mathbb P^{n+2}$ ($j=0,1,\dots,n$).
We consider an $n\times (n+1)$-matrix $A$ defined by
\begin{equation}
  \label{eqn:matrix A}
  A:=
  \begin{pmatrix}
    a_{10} & a_{11} & \dots & a_{1n} \\
    a_{20} & a_{21} & \dots & a_{2n} \\
    \vdots & \vdots & &  \vdots \\
    a_{n0} & a_{n1} & \dots & a_{nn}
  \end{pmatrix}.
\end{equation}
For every $0 \le j \le n$,
we denote the column of $A$ with index $j$  by $\mathbf a_j$
and we write
$$
A=(\mathbf a_0,\dots,\mathbf a_n).
$$
Let $A_j$ denote the matrix obtained from $A$ 
by removing its $j$-th column $\mathbf a_j$.
Then the $n$-minors of $A$ are, up to sign, 
the determinants of $A_i$, more precisely, for $j=0,\dots,n$,
the $j$-th minor $\Delta_j$ of $A$ is given by the homogeneous polynomial
$$
\Delta_j=(-1)^j \det A_j
$$ 
of degree $n$.
Similarly, for $j_1,j_2 \in [0,n]$ with $j_1 \ne j_2$, 
we denote by $A_{j_1j_2}$
the $n \times (n-1)$-matrix 
obtained from $A$ by removing $\mathbf a_{j_1}$ and $\mathbf a_{j_2}$.
Here and later the symbol $[i,j]$ denotes 
the set of consecutive integers from $i$ to $j$.
Then we obtain the following lemma.
\begin{lem}
  \label{lem:equality of ratio}
  If $i<j$ then the equations
  \begin{equation}
    \label{eqn:equality of ratio}
    x_i \Delta_j - x_j \Delta_i
    =(-1)^{i+j} \det (\mathbf q | A_{ij})
  \end{equation}
  on $\mathbb P^{n+2}$ hold, where
  $\mathbf q={}^t (q_1,\dots,q_n)$.
  Here and later, ${}^t M$ denotes the transpose of a matrix $M$.
\end{lem}
\Proof
By the linearity and alternating property of determinants,
\begin{align*}
  x_i \Delta_j
  &=x_i(-1)^j\det(\mathbf a_0, \dots, \mathbf a_i, \dots, \widehat{\mathbf a_j}, \dots \mathbf a_n)\\
  &=(-1)^{i+j}\det(x_i\mathbf a_i | A_{ij})
\end{align*}
and similarly $x_j\Delta_i=(-1)^{i+j-1}\det (x_j \mathbf a_j| A_{ij})$.
Thus the left hand side of \eqref{eqn:equality of ratio} is equal to
$(-1)^{i+j}\det (x_i\mathbf a_i+x_j \mathbf a_j | A_{ij})$, where
$$
x_i \mathbf a_i + x_j \mathbf a_j
=\mathbf q- \sum_{k \ne i,j} x_k \mathbf a_k
$$
by \eqref{eqn:S}.
Thus the alternating property of determinants yields \eqref{eqn:equality of ratio}.
\qed

\medskip

By the lemma, there exists a rational section $\theta$ of $\mathcal O_S(n-1)$
defined by
\begin{equation}
  \label{eqn:section with pole}
  \theta = \dfrac{\Delta_0}{x_0} 
  = \dfrac{\Delta_1}{x_1} = \dots = \dfrac{\Delta_n}{x_n}.
\end{equation}
Let $D(f)$ denote the standard affine open subset of $\mathbb P^{n+2}$
(or $S$) associated to a section $f$ of 
an invertible sheaf on $\mathbb P^{n+2}$ (or $S$).
By the smoothness of $S$, 
the matrix $A(p)$ has rank $n$ at every point $p\in E$, 
so that $\Delta_j(p)\ne 0$ for some $j$. 
Since $D(x_j)$ ($0\le j\le n$) cover $\mathbb P^{n+2}\setminus E$, 
the $2(n+1)$ open subsets $D(x_j)$ and $D(\Delta_j)$
cover $\mathbb P^{n+2}$ (and hence $S$).
Here and later, we abuse notation and denote by the same symbols
$x_i$ (resp.~$\Delta_j$) the restrictions to $S$ of
the global sections of $\mathcal O_{\mathbb P^{n+2}}(1)$ 
(resp.~$\mathcal O_{\mathbb P^{n+2}}(n)$).
We denote the open covering 
$\left\{D(x_j),D(\Delta_j) \bigm| 0 \le j \le n \right\}$ of $\mathbb P^{n+2}$
(or $S$) by $\mathfrak U_1$.

\begin{prop}
  \label{prop:global section}
  \begin{enumerate}
    \item $\theta$ defines a global section of $\mathcal O_S(n-1)(E)$,
    but $\theta$ does not belong to $H^0(S,\mathcal O_S(n-1))$.
  
    \item The restriction $\theta\big{\vert}_E$ of $\theta$ to $E$
    gives a trivialization
    \begin{equation}
      \label{isom:trivialization}
      \mathcal O_E \mapright{\sim} \mathcal O_S(n-1)(E)\big{\vert}_E, 
      \qquad
      \mu \longleftrightarrow \mu (\theta\big{\vert}_E).
    \end{equation}
  \end{enumerate}
\end{prop}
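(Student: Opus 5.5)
We will work on $S$ with the covering $\mathfrak U_1$, and first make explicit what it means for $\theta$ to be a section of $\mathcal O_S(n-1)(E)$. Locally on $S$ near a point $p\in E$, the ideal of $E$ in $S$ is principal, generated by some local equation $e$. Each $x_j$ ($0\le j\le n$) vanishes on $E$, so $x_j=e\cdot u_j$ locally with $u_j$ regular. The key local fact we need is that, at each $p\in E$, some $x_j$ vanishes to order exactly one along $E$, i.e.\ some $u_j$ is a unit at $p$. This holds because the $x_j$ generate $\mathcal I_{E/\mathbb P}$, so their images generate $\mathcal I_{E/S}=(e)$ locally, and hence at least one $u_j$ is a unit at $p$.

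Given this, the plan for (1) is as follows.
\begin{itemize}
\item On $D(x_j)$, the expression $\theta=\Delta_j/x_j$ is a regular section of $\mathcal O_S(n-1)$.
\item By Lemma~\ref{lem:equality of ratio}, $x_i\Delta_j-x_j\Delta_i$ is a combination of the $q_l$ and hence vanishes on $S$. So these local expressions agree on overlaps and glue to a section of $\mathcal O_S(n-1)$ over $S\setminus E$.
\item Near $p\in E$, choose $j$ with $u_j(p)\neq0$. Then $\theta=\Delta_j/(e u_j)$ has at most a simple pole along $E$, so $\theta\in H^0(\mathcal O_S(n-1)(E))$.
\end{itemize}
To show $\theta\notin H^0(\mathcal O_S(n-1))$, we check that the pole is genuine. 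Since $A(p)$ has rank $n$ for $p\in E$, some $\Delta_j(p)\neq 0$, and we may take this $j$ with $u_j$ a unit near $p$ as well. For the unit condition, use the identity $x_i\Delta_j=x_j\Delta_i$ on $S$: writing $x_i=eu_i$ and cancelling $e$ gives $u_i\Delta_j=u_j\Delta_i$ near $E$. So if $\Delta_j(p)\neq0$ and $u_i(p)\ne0$ for some $i$, then $u_j(p)=u_i(p)\Delta_j(p)/\Delta_i(p)$, provided $\Delta_i(p)\ne0$. This last case analysis is the step I expect to be the main obstacle.

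To avoid it, I would argue via ratios instead. The identities show that the vector $(u_0:\dots:u_n)$ is proportional to $(\Delta_0:\dots:\Delta_n)$ near $p$, since both are nonvanishing vectors with all $2\times2$ minors zero. Hence there is a unit $\lambda$ with $\Delta_j=\lambda u_j$ for all $j$. Then $\theta=\Delta_j/(eu_j)=\lambda/e$, which has an honest simple pole along $E$. This gives (1).

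For (2), the same local formula $\theta=\lambda/e$ with $\lambda$ a unit shows that $\theta$, viewed in $\mathcal O_S(n-1)(E)$, generates this line bundle at every point of $E$. Therefore $\theta|_E$ is a nowhere-vanishing section of $\mathcal O_S(n-1)(E)|_E$, which is consistent with the degree computation $(n-1)+E\cdot E\ldots$, i.e.\ degree $0$. So $\mu\mapsto\mu\,\theta|_E$ is an isomorphism $\mathcal O_E\to\mathcal O_S(n-1)(E)|_E$.
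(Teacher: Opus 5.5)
Your proof is correct and takes essentially the paper's route. Your observation that some $u_j$ is a unit at each $p\in E$ is exactly the base-point-freeness of $|\mathbf h-E|$ that the paper uses to cover $S$ by the $D(x_j')$, on which $\theta=\Delta_j/x_j'$. Your proportionality $(u_0:\dots:u_n)=(\Delta_0:\dots:\Delta_n)$, giving $\theta=\lambda/e$ with $\lambda$ a unit, is the local form of the paper's extension $\Phi(p)=(\Delta_0(p):\dots:\Delta_n(p))$ on $E$; you use it to make explicit the genuine pole and the nowhere-vanishing of $\theta\big{\vert}_E$, which the paper leaves implicit. The step you flagged as an obstacle is in fact immediate: $u_i(p)\Delta_j(p)\neq0$ together with $u_i\Delta_j=u_j\Delta_i$ forces $u_j(p)\neq0$.
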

\Proof
Since $x_0,\dots,x_n$ form a basis of 
$H^0(\mathbb P^{n+2},\mathcal I_{E/\mathbb P^{n+2}}(1))$,
their restrictions to $S$ form a basis of 
$H^0(\mathcal I_{E/S}(1))\simeq H^0(\mathcal O_S(1)(-E))\simeq k^{n+1}$.
The complete linear system $|\mathcal O_S(1)(-E)|$ ($\simeq |\mathbf h-E|$)
defines a morphism $\Phi': S\setminus E \rightarrow \mathbb P^n$, sending
$p \mapsto (x_0(p):\dots :x_n(p))$, 
corresponding to the projection of $S$ from $E$.
It follows from \eqref{eqn:section with pole} that
the morphism extends to $\Phi:=\Phi_{|\mathbf h-E|}: S \rightarrow \mathbb P^n$
 (Here $\Phi(p)=(\Delta_0(p):\dots :\Delta_n(p))$ if $p \in E$).
Let $x_j'$ be the global section of $\mathcal O_S(\mathbf h-E)$
corresponding to $x_j$ ($0 \le j\le n$). Then 
\begin{equation}
  \label{eqn:x_i and x_i'}
  D(x_j)\cap S=D(x_j')\setminus E
\end{equation}
for all $0 \le j\le n$.
We see that $\mathfrak U_2:=\left\{D(x_j')\bigm| 0 \le j\le n\right\}$ 
covers whole $S$, because $|\mathbf h-E|$ is base point free.
Thus, by \eqref{eqn:section with pole}, 
$\theta$ defines a global section of $\mathcal O_S(n-1)(E)$.
(2) immediately follows from (1).
\qed

\medskip

The following lemma shows that the linear system $|\mathcal O_S(n-1)(E)|$ on $S$
defines an {\em unprojection morphism} of $E$ on $S$.

\begin{lem}
  \label{lem:unprojection}
  The complete linear system $|\mathcal O_S(n-1)(E)|$ on $S$ defines 
  a proper birational morphism
  $$
  f: S \rightarrow \mathbb P(H^0(S,\mathcal O_S(n-1)(E))^{\vee})=\mathbb P^N,
  $$
  where $N=h^0(\mathcal O_S(n-1))$.
  This morphism $f$ contracts $E$ to a point $q$
  and the restriction 
  $f\big{\vert}_{S\setminus E}$ of $f$ has the inverse morphism
  defined by the projection of $f(S)$ from $q$.
\end{lem}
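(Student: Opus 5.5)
We will describe $H^0(S,\mathcal O_S(n-1)(E))$ explicitly and read off the properties of $f$ in the coordinates already introduced.

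\textbf{Step 1: a basis.} From the exact sequence $0\to\mathcal O_S(n-1)\to\mathcal O_S(n-1)(E)\to\mathcal O_E\to 0$, the vanishing $H^1(\mathcal O_S(n-1))=0$ (Lemma~\ref{lem:vanishing of H^1_*O_S}), and Proposition~\ref{prop:global section}, we get
$$
H^0(\mathcal O_S(n-1)(E))=H^0(\mathcal O_S(n-1))\oplus k\,\theta .
$$
Hence $h^0(\mathcal O_S(n-1)(E))=N+1$, and the target is $\mathbb P^N$. Since $S$ is projectively normal, $H^0(\mathcal O_S(n-1))$ is spanned by degree $n-1$ monomials in the $x_i$. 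The map $f$ is therefore $p\mapsto(\theta(p):\{\text{monomials of degree } n-1\}(p))$.

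\textbf{Step 2: base point freeness and contraction of $E$.} Away from $E$, the monomials in $x_0,\dots,x_{n+2}$ have no common zero. On $E$, every section of $\mathcal O_S(n-1)$ viewed in $\mathcal O_S(n-1)(E)$ vanishes, since it lies in the image of $\mathcal O_S(n-1)(E)(-E)$. Meanwhile $\theta|_E$ is nowhere zero by Proposition~\ref{prop:global section}(2). So the system is base point free and $f$ is a morphism. It sends every point of $E$ to the coordinate point $q=(1:0:\dots:0)$, so $E$ is contracted. The morphism $f$ is proper because $S$ is projective.

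\textbf{Step 3: the inverse on $S\setminus E$.} The projection from $q$ forgets the $\theta$-coordinate. Composed with $f$, it gives the map defined by $|\mathcal O_S(n-1)|$, which is the $(n-1)$-th Veronese re-embedding of $S\subset\mathbb P^{n+2}$. This re-embedding is a closed immersion. So $\pi_q\circ f|_{S\setminus E}$ is an isomorphism onto its image, and $f|_{S\setminus E}$ is injective and an immersion.

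We still need $f(S\setminus E)$ to avoid $q$. Any point mapping to $q$ has all degree $n-1$ monomials vanishing there, so it lies on $E$. Hence $f^{-1}(q)=E$. It follows that $f|_{S\setminus E}:S\setminus E\to f(S)\setminus\{q\}$ is an isomorphism with inverse (Veronese)$^{-1}\circ\pi_q$. In particular $f$ is birational onto $f(S)$.

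\textbf{Expected obstacle.} The main point needing care is the claim that the image of $H^0(\mathcal O_S(n-1))$ in $H^0(\mathcal O_S(n-1)(E))$ vanishes identically on $E$, so that $q$ is the only image of $E$ and $f^{-1}(q)=E$ exactly. This follows from the factorization $\mathcal O_S(n-1)=\mathcal O_S(n-1)(E)\otimes\mathcal O_S(-E)$. To make the inverse explicit, one can also use the relations $x_j\theta=\Delta_j$ from \eqref{eqn:section with pole}, which express how $\theta$ is recovered from the monomial coordinates on each $D(x_j)$.
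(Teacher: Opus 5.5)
Your proposal is correct and follows essentially the same route as the paper. You write $f$ as $(\theta : \text{sections of } \mathcal O_S(n-1))$ and use that the latter vanish on $E$ while $\theta|_E$ is nowhere zero, which gives $f(E)=\{q\}$ and $f^{-1}(q)=E$; you then invert $f|_{S\setminus E}$ by $v_{n-1}^{-1}\circ\pi_q$ from $\pi_q\circ f=v_{n-1}$ on $S\setminus E$. Your Step 1 (the splitting $H^0(\mathcal O_S(n-1)(E))=H^0(\mathcal O_S(n-1))\oplus k\theta$ via $H^1(\mathcal O_S(n-1))=0$) and your explicit base-point-freeness check only spell out what the paper leaves implicit.
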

\Proof Let $v_{n-1}: S \hookrightarrow \mathbb P^{N-1}$
be the Veronese embedding of $S$  
into $\mathbb P^{N-1}=\mathbb P(H^0(\mathcal O_S(n-1))^{\vee})$
of degree $n-1$, and $y_0,\dots,y_{N-1}$ the global sections of
$\mathcal O_S(n-1)=v_{n-1}^*(\mathcal O_{\mathbb P^{N-1}}(1))$.
Then $f: S \rightarrow \mathbb P^N$ 
is defined by $f(p)=(y_0(p),\dots,y_{N-1}(p),\theta(p))$.

For every $p\in E$, the sections $y_0,\ldots,y_{N-1}$,
regarded as sections of $\mathcal O_S((n-1)\mathbf h+E)$, vanish at $p$,
whereas $\theta|_E$ is nowhere vanishing by
Proposition~\ref{prop:global section}. Hence
$f(E)=\{q\}$, where $q:=(0:\cdots:0:1)$. Moreover, $f^{-1}(q)=E$.
Let $\pi_q$ denote the projection from $q$ onto
$\mathbb P^{N-1}$. On $S\setminus E$ we have $\pi_q\circ f=v_{n-1}$.
Since $v_{n-1}$ is an embedding, the morphism
$v_{n-1}^{-1}\circ\pi_q$ is the inverse of
$f\big{\vert}_{S\setminus E}: S\setminus E\longrightarrow f(S)\setminus\{q\}$.
Consequently, $f$ is birational and its exceptional locus is
exactly $E$.
\qed

\medskip

We next define a series of important $1$-cochains 
of $\mathcal O_S(K_S+2E)$ on $S$.
We follow the notation in Perrin's book~\cite[Chap.~X,~\S2~,pp.182--183]{Perrin08}.
Let $A_{i;j_1,j_2}$ denote the $(n-1)\times (n-1)$-matrix obtained from $A$ 
(cf.~\eqref{eqn:matrix A})
by deleting the row of index $i$ 
and the two columns of indices $j_1,j_2$ for $j_1 \ne j_2$.
The corresponding $(n-1)$-minor $\Delta_{i;j_1,j_2}$ of $A$ is given by
the homogeneous polynomial 
\begin{equation}
  \label{eqn:(n-1)-minors}
  \Delta_{i;j_1,j_2}:=
  \begin{cases}
    (-1)^{i+j_1+j_2} \det A_{i;j_1,j_2} & \mbox{if $j_1 < j_2$} \\
    (-1)^{i+j_1+j_2+1} \det A_{i;j_1,j_2} & \mbox{if $j_1 > j_2$} 
  \end{cases}
\end{equation}
of degree $n-1$.
We extend this definition to the case $j_1=j_2$ by setting $\Delta_{i;j,j}=0$.
The formula for expanding a determinant along a row applied to $A_j$ gives us
\begin{equation}
  \label{eqn:key formula}
  \sum_{k\ne j} a_{ik} \Delta_{l;j,k}=
  \sum_{k=0}^n a_{ik} \Delta_{l;j,k} = \delta_{il} \Delta_j
\end{equation}
for all $i,l \in [1,n]$ and $j \in [0,n]$,
where $\delta_{il}$ represents Kronecker's delta, i.e.,
$\delta_{il}=0$ if $i\ne l$ and $\delta_{ii}=1$
(See \cite[p.183,~l.3]{Perrin08} for similar formulas).

For each $l\in [1,n]$, 
we define the corresponding $1$-cochains $\hat t_l$ of 
$\mathcal O_S(K_S+2E)$ by
\begin{equation}
  \label{eqn:fundamental 1-cocycles}
  (\hat t_l)_{ij}:=
    \dfrac{\Delta_{l;i,j}}{x_ix_j}
  \in 
  C^1(\mathfrak U_2,\mathcal O_S(K_S+2E)).
\end{equation}
Here, the symbol $(\hat t_l)_{ij}$ for $i,j\in [0,n]$ with $i\ne j$
stands for the data of $\hat t_l$ over $D(x_i'x_j')$,
which is a local section of $\mathcal O_S(K_S+2E)\simeq \mathcal O_S(n-3)(2E)$.
It is easy to check that each of 
the $1$-cochains $\hat t_l$ ($1 \le l \le n$)
satisfies the cocycle conditions,
i.e.,
\begin{equation}
  \label{eqn:cocycle condition1}
  \dfrac{\Delta_{l;i,j}}{x_ix_j}+
  \dfrac{\Delta_{l;j,k}}{x_jx_k}+
  \dfrac{\Delta_{l;k,i}}{x_kx_i}=0
  \quad
  \mbox{for all $i,j,k$}
\end{equation}
by a similar computation given in the proof of 
Lemma~\ref{lem:equality of ratio}.
Thus $\hat t_l$ represents an 
element $t_l$ in $H^1(S,\mathcal O_S(K_S+2E))$.
In what follows, by abuse of notation,
we denote a cohomology class and a cochain representing it by the same symbol.
Thus we write 
$t_l:=\left( \Delta_{l;i,j}/x_ix_j \right)_{ij} \in H^1(S,\mathcal O_S(K_S+2E))$.
We see that the restriction of $\mathcal O_S(K_S+2E)$ to $E$
is isomorphic to $\mathcal O_E(-n-1)$.
The following lemma is a generalization of \cite[Lemma~4.2]{Nasu10} and
will play a key role in our proof of Theorems~\ref{thm:main1} and \ref{thm:main2} 
later.

\begin{lem}
  \label{lem:linear independence of coh classes}
  Under the trivialization \eqref{isom:trivialization}, 
  $$
  t_l\big{\vert}_E
  =\left(
    \dfrac{\Delta_{l;i,j}}{\Delta_i\Delta_j}
  \right)_{ij}
  \in 
  H^1(\mathfrak U_2,\mathcal O_E(-n-1))
  $$
  for all $l \in [1,n]$.
  Moreover, $t_1\big{\vert}_E,\dots,t_n\big{\vert}_E$ are linearly independent, 
  and hence they form a basis of
  $H^1(E,\mathcal O_E(-n-1))\simeq k^n$.
\end{lem}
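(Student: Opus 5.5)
The proof has two parts: identify the restricted cocycle explicitly, then show linear independence by a residue/Laurent-coefficient computation on $E\simeq\mathbb P^1$.

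\emph{Restriction formula.} The trivialization \eqref{isom:trivialization} identifies a local section $\sigma$ of $\mathcal O_S(n-1)(E)|_E$ with $\sigma/\theta|_E$. Write $\mathcal O_S(K_S+2E)=\mathcal O_S(n-3)(2E)$ as $\mathcal O_S(-2)\otimes(\mathcal O_S(n-1)(E))\otimes\mathcal O_S(E-\mathbf h)\otimes\mathcal O_S(\mathbf h)$; more simply, on $D(x_i'x_j')$ use $x_i=x_i'$ (as sections of $\mathcal O_S(\mathbf h-E)$ times the canonical section of $E$) and $\theta=\Delta_i/x_i$. The cochain $\Delta_{l;i,j}/(x_ix_j)$ is then rewritten as
\[
\frac{\Delta_{l;i,j}}{x_ix_j}=\frac{\Delta_{l;i,j}\,\theta^2}{\Delta_i\Delta_j}.
\]
Dividing by $\theta^2$ under the trivialization (twice, since the twist by $2E$ pairs with $\mathcal O_S(2(n-1))(2E)$ against the degree $2n$ of $\Delta_i\Delta_j$ versus $n-1$ of $\Delta_{l;i,j}$) and restricting to $E$ gives the stated cochain on the open sets $D(\Delta_i\Delta_j)\cap E=D(x_i'x_j')\cap E$. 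Its degree is $(n-1)-2n=-n-1$, as it should be. This step is mostly bookkeeping of twists.

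\emph{Linear independence.} Since $h^1(\mathcal O_E(-n-1))=n$, it suffices to show independence. I would pass to the two-set cover of $E\simeq\mathbb P^1$ as follows. Restricted to $E$, the $\Delta_j$ are degree-$n$ forms in coordinates $(s:u)=(x_{n+1}:x_{n+2})$, and $\Phi|_E=(\Delta_0:\dots:\Delta_n)$ is a morphism $E\to\mathbb P^n$. Because $\Phi|_E$ is given by sections of $(\mathbf h-E)|_E=\mathcal O_E(n)$, the $\Delta_j|_E$ should span $H^0(\mathcal O_E(n))$, so $\Phi|_E$ is a rational normal curve. I would check this span claim from the non-degeneracy that smoothness of $S$ forces.

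Granting the span claim, a linear change of the coordinates $x_0,\dots,x_n$ (changing $A$ accordingly) lets us assume $\Delta_j|_E=s^{n-j}u^j$. The cocycle relation then lets us reduce to the single pair $(i,j)=(0,n)$ on $D(s)\cap D(u)$, which gives
\[
t_l|_E=\frac{\Delta_{l;0,n}|_E}{s^n u^n}.
\]
The Čech class of $g/(s^nu^n)$ with $\deg g=n-1$ is zero exactly when $g=0$, because $H^1(\mathcal O(-n-1))$ is spanned by the monomials $s^{-a}u^{-b}$ with $a,b\ge1$ and $a+b=n+1$, and every monomial of $g$ gives one of these. So the $t_l|_E$ are independent iff the $n$ forms $\Delta_{l;0,n}|_E$ of degree $n-1$ ($l=1,\dots,n$) are linearly independent.

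\emph{Main obstacle.} By Cramer/\eqref{eqn:key formula}, the forms $\Delta_{l;0,n}$ are the entries of the adjugate-type vector relating the columns $\mathbf a_0,\mathbf a_n$ to $A_{0n}$. Suppose $\sum c_l\Delta_{l;0,n}|_E=0$. Then $\det(\mathbf c\,|\,A_{0n})|_E=0$, i.e.\ the $n$ vectors $\mathbf c,\mathbf a_1,\dots,\mathbf a_{n-1}$ are dependent along $E$. I would then combine $x_i\Delta_j=x_j\Delta_i$ with the basis property of $\Delta_j|_E$ to show that $\mathbf a_1,\dots,\mathbf a_{n-1}$ restricted to $E$ would then fail to have the required rank. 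That would contradict the rank-$n$ condition on $A$ along $E$, which is equivalent to the smoothness of $S$ along $E$ via the Jacobian $\partial q_i/\partial x_j|_E=a_{ij}|_E$.

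Alternatively, I would generalize the explicit argument of \cite[Lemma~4.2]{Nasu10}. Making this rank argument rigorous is where I expect the real difficulty.
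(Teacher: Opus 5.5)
\textbf{The reduction is sound, but the key step is missing.} Your first two steps are fine. The restriction formula is the bookkeeping $\Delta_{l;i,j}/x_ix_j=\theta^2\Delta_{l;i,j}/\Delta_i\Delta_j$. After normalizing to $\Delta_j|_E=s^{n-j}u^j$, you restrict to the subcover $\{D(s),D(u)\}$ and use the monomial basis of $H^1(\mathcal O_E(-n-1))$. This correctly shows the lemma is equivalent to linear independence of the $n$ forms $\Delta_{l;0,n}|_E\in H^0(\mathcal O_E(n-1))$.

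That independence is the whole content of the lemma, and you have not proved it. Pointwise rank considerations cannot prove it either. Since $\Delta_0|_E=s^n$ and $\Delta_n|_E=u^n$ have no common zero and $\ker A(p)=k\,\Delta(p)$, the matrix $A_{0n}(p)$ has rank exactly $n-1$ at every $p\in E$. Indeed, a kernel vector of $A_{0n}(p)$ would give a kernel vector of $A(p)$ with vanishing $0$-th and $n$-th entries. So the condition $\det(\mathbf c\,|\,A_{0n})|_E\equiv0$, i.e.\ $\mathbf c\in\operatorname{im}A_{0n}(p)$ for all $p$, never conflicts with $\operatorname{rank}A(p)=n$; it merely restates the dependence. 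The relation $x_i\Delta_j=x_j\Delta_i$ is also vacuous on $E$, where all $x_i$ vanish.

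\textbf{The missing idea is a global degree argument.} Because $A_{0n}|_E$ has constant rank, it is a subbundle map, and there is an exact sequence
\[
0\longrightarrow\mathcal O_E(-1)^{\oplus n-1}\overset{A_{0n}}{\longrightarrow}\mathcal O_E^{\oplus n}\overset{\mathbf y\mapsto\det(\mathbf y\,|\,A_{0n})}{\longrightarrow}\mathcal O_E(n-1)\longrightarrow0
\]
in which $\mathbf e_l\mapsto(-1)^{n+1}\Delta_{l;0,n}|_E$. Since $\mathcal O_E(-1)$ has no cohomology, $H^0$ gives an isomorphism $k^n\simeq H^0(\mathcal O_E(n-1))$. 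In your language: a constant $\mathbf c$ lying fiberwise in $\operatorname{im}A_{0n}$ is a global section of a subbundle isomorphic to $\mathcal O_E(-1)^{\oplus n-1}$, hence $\mathbf c=0$. With this step supplied, your route closes.

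\textbf{How the paper does it.} The paper uses the same mechanism on the full sequence \eqref{ses:normal bundles} twisted by $\mathcal O_E(-2)$. The middle term $\mathcal O_E(-1)^{\oplus n+1}$ is acyclic, so $\partial_{E/S}\colon H^0(\mathcal O_E^{\oplus n})\to H^1(\mathcal O_E(-n-1))$ is an isomorphism. The explicit local lifts $s_j=\Delta_j^{-1}\,{}^t(\Delta_{l;j,0},\dots,\Delta_{l;j,n})$ of $\mathbf e_l$, together with \eqref{eqn:key formula} and \eqref{eqn:cocycle condition2}, then give $\partial_{E/S}(\mathbf e_l)=-t_l|_E$ directly.

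Working in the original coordinates, this avoids two points your route still leaves open:
\begin{itemize}
\item The span claim. It is true, but the reason is $N_{S/\mathbb P}|_E\simeq\mathcal O_E(2)^{\oplus n}$: if the $\Delta_j|_E$ were dependent, you could arrange $\Delta_n|_E\equiv0$, and then $\mathcal O_E(1)$ would split off $N_{S/\mathbb P}|_E$. The rank condition from smoothness alone only gives base-point-freeness.
\item The coordinate change. You must check that under your change of coordinates all the classes $t_l|_E$ are multiplied by the same nonzero constant. This needs an argument, because the covering $\mathfrak U_2$ changes as well.
\end{itemize}
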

\Proof 
We first note that 
the data $(\Delta_{l;i,j}/\Delta_i\Delta_j)_{ij}$ ($0\le i,j\le n$) satisfy
the cocycle condition, i.e.,
\begin{equation}
  \label{eqn:cocycle condition2}
  \dfrac{\Delta_{l;i,j}}{\Delta_i\Delta_j}+
  \dfrac{\Delta_{l;j,k}}{\Delta_j\Delta_k}+
  \dfrac{\Delta_{l;k,i}}{\Delta_k\Delta_i}=0
  \quad
  \mbox{for all $i,j,k$},
\end{equation}
which can be obtained from
the multiplication of \eqref{eqn:cocycle condition1} by $\theta^{-2}$.
There exists a natural exact sequence 
\begin{equation}
  \label{ses:normal bundles}
  \begin{CD}
    0 @>>> \underbrace{N_{E/S}}_{\simeq \mathcal O_E(1-n)} 
    @>{\iota}>> 
    \underbrace{N_{E/\mathbb P^{n+2}}}_{\simeq \mathcal O_E(1)^{\oplus n+1}}
    @>{\pi_{E/S}}>> 
    \underbrace{N_{S/\mathbb P^{n+2}}\big{\vert}_E}_{\simeq \mathcal O_E(2)^{\oplus n}}
    @>>> 0
  \end{CD}
\end{equation}
on $E \simeq \mathbb P^1$. Taking the dual bases of 
$x_j$ ($0 \le j \le n$) and $q_i$ ($1 \le i \le n$), we see that
the sheaf homomorphisms $\pi_{E/S}$ and $\iota$
are represented by the matrix $A$ in \eqref{eqn:matrix A} and 
${}^t (\Delta_0, \dots, \Delta_n)$, respectively. 
The exact sequence obtained by tensoring $\eqref{ses:normal bundles}$ 
with $\mathcal O_E(-2)$ induces a coboundary map
$$
\partial_{E/S}: H^0(E,
\underbrace{N_{S/\mathbb P^{n+2}}\big{\vert}_E\otimes \mathcal O_E(-2)}_{\simeq \mathcal O_E^{\oplus n}}) \rightarrow H^1(E,\underbrace{N_{E/S}\otimes \mathcal O_E(-2)}_{\mathcal O_E(-n-1)}),
$$
which is clearly an isomorphism.
We fix $l \in [1,n]$ and compute the image of the $l$-th standard basis
$\mathbf e_l={}^t (0,\dots,\overset{l}{1},\dots,0)$
of $H^0(\mathcal O_E^{\oplus n})$.
We put
$$
s_j := \Delta_j^{-1}
\begin{pmatrix}
  \Delta_{l;j,0}
  \\ \vdots \\
  \Delta_{l;j,n}
\end{pmatrix}
$$
for $0 \le j\le n$. 
A direct computation shows that $\pi_{E/S}(s_j)=\mathbf e_l$,
because the $i$-th entry of $\pi_{E/S}(s_j)
=\Delta_j^{-1} A {}^t\left(
  \Delta_{l;j,0},\dots,\Delta_{l;j,n}
  \right)$ is equal to $\delta_{il}$
by \eqref{eqn:key formula}.
Thus $s_j$ is a local lift of $\mathbf e_l$ by $\pi_{E/S}$ over $D(x_j')$.
Then the coboundary of the $0$-cochain $(s_j)_{0 \le j \le n}$ is equal to
$$
s_j\big{\vert}_{D(x_i'x_j')} - s_i\big{\vert}_{D(x_i',x_j')}
= \dfrac{1}{\Delta_j}
\begin{pmatrix}
  \Delta_{l;j,0}
  \\ \vdots \\
  \Delta_{l;j,n}
\end{pmatrix}
- \dfrac{1}{\Delta_i}
\begin{pmatrix}
  \Delta_{l;i,0}
  \\ \vdots \\
  \Delta_{l;i,n}
\end{pmatrix}
$$
in $\Gamma(D(x_i'x_j'), \mathcal O_E(-1)^{\oplus n+1})$.
The $k$-th entry of the last equation is equal to 
$$
\dfrac{\Delta_{l;jk}}{\Delta_j}-\dfrac{\Delta_{l;ik}}{\Delta_i}
=\dfrac{\Delta_{l;jk}}{\Delta_j}+\dfrac{\Delta_{l;ki}}{\Delta_i}
=-\Delta_k \left(\dfrac{\Delta_{l;ij}}{\Delta_i \Delta_j} \right)
$$
by \eqref{eqn:cocycle condition2}.
This coincides with the $k$-th entry of 
$\iota (-t_l\big{\vert}_E)$.
Therefore we have proved that the coboundary image of $\mathbf e_l$
is equal to $-t_l\big{\vert}_E$ in $H^1(\mathcal O_E(-n-1))$ 
for any $1 \le l\le n$.
Since $\mathbf e_1,\dots,\mathbf e_n$ are clearly linearly independent, 
so are $t_1\big{\vert}_E,\dots,t_n\big{\vert}_E$,
and the proof has been completed.
\qed

\subsection{Vanishing theorems on CIQ}
\label{subsec:vanishing}

Let $n \ge 2$ be an integer, $S$ a (smooth) complete intersection 
$S_{2,...,2} \subset \mathbb P^{n+2}$ of $n$ quadrics
containing a line $E$ on $\mathbb P^{n+2}$.

\begin{lem}\label{lem:higher-direct-image}
Let $f\colon S\longrightarrow \overline S$
be the birational morphism defined by $|D|$, 
where
$D=(n-1)\mathbf h+E$ (cf.~Lemma~\ref{lem:unprojection}),
and $\mathcal F$ an invertible sheaf on $S$.
If $e:=\deg \mathcal F\big{\vert}_E \ge -1$, then
$$
R^1f_*\mathcal F=0.
$$
\end{lem}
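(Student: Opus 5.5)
We use the theorem on formal functions together with the fact, from Lemma~\ref{lem:unprojection}, that $f$ is an isomorphism away from $E$ and $f^{-1}(q)=E$. Hence $R^1f_*\mathcal F$ is supported at the point $q$. It therefore suffices to show that its completion at $q$ vanishes, i.e.
$$
\varprojlim_r H^1(E_r,\mathcal F\otimes\mathcal O_{E_r})=0 .
$$
Here $E_r\subset S$ is the subscheme defined by $\mathcal I_{E/S}^r=\mathcal O_S(-rE)$; this replaces the powers of $\mathfrak m_q\mathcal O_S$ by powers of $\mathcal I_E$, which is legitimate since the two ideal-adic topologies agree. So we will prove $H^1(E_r,\mathcal F|_{E_r})=0$ for all $r\ge1$, by induction on $r$.

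For the inductive step we use the exact sequence
$$
0\longrightarrow \mathcal F\otimes\mathcal O_S(-rE)\big\vert_E
\longrightarrow \mathcal F\big\vert_{E_{r+1}}
\longrightarrow \mathcal F\big\vert_{E_r}\longrightarrow 0,
$$
which holds because $E$ is a Cartier divisor on the smooth surface $S$, so that $\mathcal I_E^r/\mathcal I_E^{r+1}\simeq\mathcal O_E(-rE)$. Since $E\simeq\mathbb P^1$ and $E^2=1-n<0$, the left-hand term is a line bundle on $E$ of degree
$$
e-r(1-n)=e+r(n-1)\ge -1+(n-1)r\ge -1 \qquad (r\ge 0,\ n\ge2).
$$
Its $H^1$ therefore vanishes. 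The case $r=0$ is the base case, $H^1(E,\mathcal F|_E)=H^1(\mathcal O_{\mathbb P^1}(e))=0$ because $e\ge-1$. The long exact cohomology sequence then gives $H^1(E_{r+1},\mathcal F|_{E_{r+1}})=0$ for all $r$. The inverse limit vanishes, so the completion of the stalk $(R^1f_*\mathcal F)_q$ is zero. As $R^1f_*\mathcal F$ is coherent ($f$ is proper), the stalk itself vanishes, and so $R^1f_*\mathcal F=0$.

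The only point needing care is the application of the formal functions theorem with the $E$-adic filtration rather than the $\mathfrak m_q$-adic one. This is justified because $f^{-1}(q)_{\mathrm{red}}=E$: the ideals $\mathfrak m_q\mathcal O_S$ and $\mathcal I_E$ have the same radical, so each contains a power of the other and the two inverse systems are cofinal. Note also that we need no information about the singularity of $\overline S$ at $q$; only the negativity $E^2<0$ of the smooth rational curve $E$ is used.
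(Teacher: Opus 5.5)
Your proof is correct and is essentially the paper's argument. You reduce, via the theorem on formal functions, to the thickenings of $E$ (cofinal with the $\mathfrak m_q$-adic ones), then show $H^1$ vanishes by induction on the filtration whose graded pieces are $\mathcal O_{\mathbb P^1}(e+r(n-1))$ with $e+r(n-1)\ge -1$. Your extra remarks — that the two ideals have the same radical, and that coherence lets you pass from the completed stalk to the stalk — only make explicit what the paper leaves implicit.
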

\Proof
By Lemma~\ref{lem:unprojection}, the exceptional locus of $f$ is
exactly $E$. Let $q=f(E)$. Since $f$ is an isomorphism over
$\overline S\setminus\{q\}$,
the sheaf $R^1f_*\mathcal F$ is supported at $q$.
The infinitesimal neighborhoods of the fiber $f^{-1}(q)$
and the thickenings $rE$ form cofinal systems.
Hence, by the theorem on formal functions, we obtain
$$
\widehat{\bigl(R^1f_*\mathcal F\bigr)}_q
\simeq
\varprojlim_r
H^1\bigl(rE,\mathcal F\big{\vert}_{rE}\bigr).
$$
We show that $H^1\bigl(rE,\mathcal F\big{\vert}_{rE}\bigr)=0$
for every $r\geq1$. For $r=1$, we have
$
\mathcal F\big{\vert}_E \simeq \mathcal O_{\mathbb P^1}(e)$,
and hence
$
H^1\bigl(E,\mathcal F\big{\vert}_E\bigr)=0$.
For $r\geq2$, the natural filtration of $rE$ gives an exact sequence
$$
0\longrightarrow
\mathcal F\otimes_S \mathcal O_E\bigl(-(r-1)E\bigr)
\longrightarrow
\mathcal F\big{\vert}_{rE}
\longrightarrow
\mathcal F\big{\vert}_{(r-1)E}
\longrightarrow0
$$
for every $r\geq2$. Since $E^2=1-n$, we have
$$
\mathcal F\otimes_S \mathcal O_E\bigl(-(r-1)E\bigr)
\simeq
\mathcal O_{\mathbb P^1}
\bigl(e+(r-1)(n-1)\bigr),
$$
where $e+(r-1)(n-1)\geq -1$ by $e \ge -1$ and $r\ge 1$.
Therefore, $H^1\bigl(\mathcal F(-(r-1)E)\big{\vert}_E \bigr)=0$.
By induction on $r$, we obtain $H^1\bigl(rE,\mathcal F|_{rE}\bigr)=0$
for every $r\geq1$ and hence $R^1f_*\mathcal F=0$.
\qed

\begin{lem}
  \label{lem:vanishing-D}
  Set $D=(n-1)\mathbf h+E$. Then, for all sufficiently large
  integers $m$, we have
  \begin{enumerate}
    \item $H^1(S,\mathcal O_S(mD))=0$,
    \item $H^1(S,\mathcal O_S(-mD))=0$,
    \item $H^1(S,\mathcal O_S(mD-2E))=0$ and
    \item $H^1(S,\mathcal O_S(2\mathbf h-mD))=0$ if $n>3$.
  \end{enumerate}
\end{lem}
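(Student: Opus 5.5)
The plan is to deduce all four vanishings from a single mechanism: the Leray spectral sequence for the contraction $f\colon S\to\overline S$ of Lemma~\ref{lem:unprojection}, combined with Lemma~\ref{lem:higher-direct-image} and Serre vanishing on $\overline S$. The starting observation is that $\mathcal O_S(D)=f^*\mathcal O_{\overline S}(1)$, where $\mathcal O_{\overline S}(1)$ is the restriction of the hyperplane bundle of $\mathbb P^N$ to $\overline S=f(S)$; in particular it is ample on the projective variety $\overline S$. Note also that $D\cdot E=(n-1)+E^2=0$, consistent with $\mathcal O_S(D)|_E\simeq\mathcal O_E$.

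The general step is as follows. Let $\mathcal F$ be a fixed invertible sheaf on $S$ with $e=\deg\mathcal F|_E\ge -1$. Lemma~\ref{lem:higher-direct-image} gives $R^1f_*\mathcal F=0$. The projection formula then gives
\[
R^1f_*\bigl(\mathcal F(mD)\bigr)\simeq R^1f_*\mathcal F\otimes\mathcal O_{\overline S}(m)=0
\quad\text{and}\quad
f_*\bigl(\mathcal F(mD)\bigr)\simeq f_*\mathcal F\otimes\mathcal O_{\overline S}(m).
\]
From the Leray spectral sequence we obtain
\[
H^1\bigl(S,\mathcal F(mD)\bigr)\simeq H^1\bigl(\overline S,f_*\mathcal F\otimes\mathcal O_{\overline S}(m)\bigr).
\]
Since $f$ is proper, $f_*\mathcal F$ is coherent, so the right-hand side vanishes for $m\gg0$ by Serre vanishing. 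This argument uses no characteristic-zero tools such as Kawamata--Viehweg, so it works for every $p$.

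The four cases follow by choosing $\mathcal F$ appropriately.
\begin{enumerate}
\item Take $\mathcal F=\mathcal O_S$, so $e=0$.
\item By Serre duality, $H^1(\mathcal O_S(-mD))\simeq H^1(\mathcal O_S(K_S+mD))^\vee$. Take $\mathcal F=\mathcal O_S(K_S)=\mathcal O_S(n-3)$, so $e=n-3\ge0$.
\item Take $\mathcal F=\mathcal O_S(-2E)$, so $e=-2E^2=2(n-1)\ge0$.
\item By Serre duality, $H^1(\mathcal O_S(2\mathbf h-mD))\simeq H^1(\mathcal O_S((n-5)\mathbf h+mD))^\vee$. Take $\mathcal F=\mathcal O_S(n-5)$, so $e=n-5$. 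The condition $e\ge-1$ holds exactly when $n\ge4$, which explains the hypothesis $n>3$ in item (4).
\end{enumerate}

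I expect no serious obstacle. The only points to check are that $f^*\mathcal O_{\overline S}(1)=\mathcal O_S(D)$, which holds by the construction of $f$ via $|D|$, and that the degree computations on $E$ are correct, using $\mathbf h\cdot E=1$ and $E^2=1-n$. Finally, "sufficiently large" in each item is uniform, since only finitely many coherent sheaves $f_*\mathcal F$ on $\overline S$ are involved.
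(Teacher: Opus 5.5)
Your proposal is correct and follows the paper's proof essentially verbatim. Both use the same mechanism: $R^1f_*\mathcal F=0$ from Lemma~\ref{lem:higher-direct-image}, then the projection formula, the Leray spectral sequence and Serre vanishing for $\mathcal O_S(D)=f^*(\text{ample})$, with the same choices $\mathcal O_S$, $\omega_S$, $\mathcal O_S(-2E)$, $\mathcal O_S((n-5)\mathbf h)$ and Serre duality in (2) and (4). One trivial remark: that subsection allows $n=2$, and then the degree in (2) is $n-3=-1$ rather than $\ge 0$, which still meets the hypothesis $e\ge -1$.
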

\Proof
By Lemma~\ref{lem:unprojection}, there exists an ample Cartier
divisor $A$ on $\overline S$ such that
$
D=f^*A.
$
Let $\mathcal F$ be one of the coherent sheaves
$\mathcal O_S$, $\omega_S$ ($\simeq \mathcal O_S(n-3)$) and $\mathcal O_S(-2E)$.
It follows from Lemma~\ref{lem:higher-direct-image} that $R^1f_* \mathcal F=0$.
Then the projection formula shows that
$$
R^1f_*
\bigl(\mathcal F\otimes\mathcal O_S(mD)\bigr)
\simeq
R^1f_*\mathcal F\otimes
\mathcal O_{\overline S}(mA)
=0.
$$
It follows from the Leray spectral sequence that
$$
H^1\bigl(
S,\mathcal F\otimes\mathcal O_S(mD)
\bigr)
\simeq
H^1\bigl(
\overline S,
f_*\mathcal F\otimes\mathcal O_{\overline S}(mA)
\bigr).
$$
Since $f_*\mathcal F$ is coherent and $A$ is ample, the right-hand
side vanishes for $m\gg0$ by Serre vanishing.
By taking $\mathcal F=\mathcal O_S$ and
$\mathcal F=\mathcal O_S(-2E)$, we have
$H^1(S,\mathcal O_S(mD))=H^1(S,\mathcal O_S(mD-2E))=0$ for $m\gg0$.
Similarly, the Serre duality shows
$H^1(S,\mathcal O_S(-mD))=H^1(S,\omega_S(mD))^{\vee}=0$ for $m\gg0$.

Finally, assume that $n>3$ and set
$\mathcal G:=\mathcal O_S((n-5)\mathbf h)$.
Then by Lemma~\ref{lem:higher-direct-image}, we have $R^1f_*\mathcal G=0$.
Therefore, the same argument using 
the projection formula and Leray spectral sequence shows
$$
H^1\bigl(S,\mathcal O_S(mD+(n-5)\mathbf h)\bigr)
\simeq
H^1\bigl(\overline S,f_*\mathcal G\otimes \mathcal O_{\overline S}(mA)\bigr),
$$
where the last cohomology groups vanish for $m\gg0$.
Since $K_S=(n-3)\mathbf h$, we have the final assertion by the Serre duality.
\qed

\begin{lem}
\label{lem:vanishing of H^1_*O_S}
Let $S$ be a smooth complete intersection surface 
$S_{2,\ldots,2}\subset \mathbb P^{n+2}$.
Then 
$H^1(S,\mathcal O_S(m))=0$
for every integer $m$.
\end{lem}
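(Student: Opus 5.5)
The plan is the standard argument that complete intersections are projectively normal and have no intermediate cohomology, run through the Koszul resolution. I will do induction along the chain of complete intersections
$$
\mathbb P^{n+2}=X_0\supset X_1\supset\cdots\supset X_n=S,
$$
where $X_r=V(q_1,\dots,q_r)$ is a complete intersection of dimension $n+2-r$. The claim to prove for all $r$ is that
\[
H^i(X_r,\mathcal O_{X_r}(m))=0 \quad\text{for } 0<i<\dim X_r \text{ and all } m\in\mathbb Z,
\]
and that $H^0(\mathbb P^{n+2},\mathcal O(m))\to H^0(X_r,\mathcal O_{X_r}(m))$ is surjective for all $m$. The case $r=0$ is Serre's computation of the cohomology of line bundles on projective space.

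For the inductive step, note that $q_{r+1}$ is a nonzerodivisor on $X_r$, since the $q_i$ form a regular sequence ($S$ has the expected codimension $n$). This gives the exact sequence
\[
0\to\mathcal O_{X_r}(m-2)\xrightarrow{\,q_{r+1}\,}\mathcal O_{X_r}(m)\to\mathcal O_{X_{r+1}}(m)\to 0 .
\]
From the long exact sequence:
\begin{itemize}
\item For $0<i<\dim X_{r+1}=\dim X_r-1$, the group $H^i(\mathcal O_{X_{r+1}}(m))$ sits between $H^i(\mathcal O_{X_r}(m))$ and $H^{i+1}(\mathcal O_{X_r}(m-2))$. Both vanish by induction, because $i+1<\dim X_r$.
\item Surjectivity on $H^0$ follows from $H^1(\mathcal O_{X_r}(m-2))=0$ when $\dim X_r\ge 2$, which holds for all $r<n$.
\item For $r=n-1$ we have $\dim X_{n-1}=3$, so $\dim S=2$ and the statement specializes to $H^1(S,\mathcal O_S(m))=0$, as desired.
\end{itemize}

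As an alternative, one can use the Koszul complex of $(q_1,\dots,q_n)$ twisted by $m$, which resolves $\mathcal O_S(m)$ by sums of $\mathcal O_{\mathbb P^{n+2}}(m-2j)$. Chasing the cohomology of the split short exact pieces and using $H^i(\mathbb P^{n+2},\mathcal O(\ast))=0$ for $0<i<n+2$ gives $H^1(\mathcal O_S(m))=0$, since $1+j<n+2$ for all $j\le n$.

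There is no real obstacle here. The only point requiring care is the regular sequence property, and with it the exactness of the multiplication sequences. This follows because each $X_r$ is Cohen–Macaulay, being a complete intersection, and $\codim X_{r+1}=\codim X_r+1$, so $q_{r+1}$ vanishes on no associated component. Smoothness of $S$ is not even needed beyond ensuring the correct dimension, and the argument works in any characteristic.
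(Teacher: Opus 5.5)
Your proof is correct. Your main argument is a step-by-step version of the paper's computation rather than a literal copy of it.

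The paper stays on $\mathbb P^{n+2}$ throughout. It writes $H^1(\mathcal O_S(m))\simeq H^2(\mathcal I_S(m))$, breaks the Koszul resolution of $\mathcal I_S$ into short exact sequences $0\to\mathcal K_i\to\mathcal L_i\to\mathcal K_{i-1}\to 0$, and pushes the class up in cohomological degree until it lands in $H^{n+1}(\mathcal O_{\mathbb P^{n+2}}(m-2n))=0$. This is precisely your ``alternative''; resolving $\mathcal O_S(m)$ instead of $\mathcal I_S$ makes no difference.

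Your primary route instead cuts by one quadric at a time, using $0\to\mathcal O_{X_r}(m-2)\to\mathcal O_{X_r}(m)\to\mathcal O_{X_{r+1}}(m)\to 0$. This corresponds to building the same Koszul complex as an iterated mapping cone. It buys a stronger conclusion: vanishing of all intermediate cohomology of every partial intersection $X_r$, plus surjectivity of $H^0(\mathcal O_{\mathbb P^{n+2}}(m))\to H^0(\mathcal O_{X_r}(m))$. The price is checking at each stage that $X_r$ has pure codimension $r$ and that $q_{r+1}$ is a nonzerodivisor on it. You do justify this, and it is the same regular-sequence input the paper uses silently when it asserts exactness of the Koszul resolution. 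The paper's version is shorter because only line bundles on projective space ever appear.

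One wording point: the short exact pieces of the Koszul complex are not split. For instance, $\mathcal O(-2n)\to\mathcal O(-2n+2)^{\oplus n}$ admits no retraction, since $H^0(\mathcal O(-2))=0$. So ``split short exact pieces'' should read ``the short exact sequences into which the resolution breaks''.
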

\Proof
It follows from the standard exact sequence
$[0 \rightarrow \mathcal I_S 
\rightarrow \mathcal O_{\mathbb P}
\rightarrow \mathcal O_S
\rightarrow 0]\otimes_{\mathbb P} \mathcal O_{\mathbb P}(m)
$
that $H^1(\mathcal O_S(m\mathbf h))\simeq H^2(\mathcal I_S(m))$.
Consider the Koszul resolution 
$$
0 \rightarrow \underbrace{\mathcal O_{\mathbb P}(-2n)}_{=\mathcal L_{n-1}}
\rightarrow \underbrace{\mathcal O_{\mathbb P}(-2n+2)^{\oplus n}}_{=\mathcal L_{n-2}}
\rightarrow \dots
\rightarrow \underbrace{\mathcal O_{\mathbb P}(-4)^{\oplus \binom{n}{2}}}_{=\mathcal L_1}
\rightarrow \underbrace{\mathcal O_{\mathbb P}(-2)^{\oplus n}}_{=\mathcal L_0}
\rightarrow \mathcal I_S
\rightarrow 0
$$
of the ideal sheaf $\mathcal I_S$ of $S$ in $\mathbb P^{n+2}$.
We put $\mathcal K_i:=\im u_{i+1}: \mathcal L_{i+1} \rightarrow \mathcal L_i$
($0 \le i\le n-2$).
Then we see that
$$
H^2(\mathbb P^{n+2}, \mathcal I_S(m)) \simeq H^3(\mathbb P^{n+2}, \mathcal K_0(m))
\simeq 
\dots
\simeq H^{n+1}(\mathbb P^{n+2}, \mathcal K_{n-2}(m)).
$$
Since $\mathcal K_{n-2} = \mathcal L_{n-1}$,
we have $H^{n+1}(\mathcal K_{n-2}(m))\simeq 
H^{n+1}(\mathcal O_{\mathbb P^{n+2}}(m-2n))=0$
by $0 < n+1 < n+2$.
Thus we have proved the lemma. \qed

\subsection{Infinitesimal deformations and obstructions}
\label{subsec:infinitesimal}

In this subsection, we recall some basic facts about 
infinitesimal deformations and obstructions of embedded varieties, 
or more generally, of closed subschemes.
Let $X$ be a scheme over $k$, and $C$ a closed subscheme of $X$.
Then the first-order infinitesimal deformations
$\tilde C \subset X \times_k k[t]/(t^2)$ of $C$ in $X$ 
are in one-to-one correspondence with
global sections $\alpha$ of the normal sheaf $N_{C/X}$ of $C$.
For simplicity, in what follows,
we assume that $C$ is a locally complete intersection in $X$.
Then the normal sheaf $N_{C/X}$ of $C$ in $X$ is a locally free sheaf on $C$
and by \cite[Theorem~4.3.5]{Sernesi},
for each $\alpha$, there exists a unique element $\ob(\alpha)$
of $H^1(N_{C/X})$ ($\hookrightarrow \Ext^1(\mathcal I_C,\mathcal O_C)$)
such that $\ob(\alpha)=0$ if and only if
$\tilde C$ extends to a deformation 
${\tilde {\tilde C}}$ of $C$ over $k[t]/(t^3)$. 
Here $\ob(\alpha)$ is called the {\em primary obstruction} of $\alpha$,
and $\ob(\alpha)\ne 0$ implies that 
the Hilbert scheme of $X$ is singular at $[C]$
by infinitesimal lifting property of smoothness.
The natural surjection
$\mathcal I_C \twoheadrightarrow 
\mathcal I_C/\mathcal I_C^2 = N_{C/X}^{\vee}$ induces the following 
two isomorphisms
$$
H^0(N_{C/X})\simeq \Hom_C(N_{C/X}^{\vee},\mathcal O_C) 
\quad
\mbox{and}
\quad
H^1(N_{C/X})\simeq \Ext^1_C(N_{C/X}^{\vee},\mathcal O_C)
$$
of cohomology groups on $C$.
By the first isomorphism, we identify the global section $\alpha$ of $N_{C/X}$ 
with the corresponding
global $\mathcal O_C$-linear homomorphism from $N_{C/X}^{\vee}$ to $\mathcal O_C$.
Similarly, by the second isomorphism,
$\ob(\alpha)$ is identified with an element in
$\Ext^1_C(N_{C/X}^{\vee},\mathcal O_C)$, and
under the identifications, $\ob(\alpha)$ can be expressed by
the cup product formula
$$
\ob(\alpha)=\alpha \cup \mathbf e \cup \alpha,
$$
where $\mathbf e \in \Ext^1_X(\mathcal O_C,N_{C/X}^{\vee})$ is 
the extension class of the exact sequence
\begin{equation}
  \label{ses:conormal}
  0 \rightarrow N_{C/X}^{\vee} \rightarrow \mathcal O_X/\mathcal I_C^2
  \rightarrow \mathcal O_C \rightarrow 0
\end{equation}
on $X$ (cf.~\cite[Theorem~2.1]{Nasu5} and \cite{Nasu10}). 
Primary obstructions of the first-order deformations of projective varieties
can be computed by using the following lemma.
For related results on detecting obstructions to deformations of space curves 
$C \subset \mathbb P^3$ via cup products, see Fl{\o}ystad \cite{Floystad93} 
and Kleppe \cite[Proposition 3.8]{Kleppe06}.

\begin{lem}
  \label{lem:primary obstruction}
  Let $C \subset \mathbb P^n$ be a locally complete intersection 
  closed subscheme.
  Then for every integer $l$,
  $\ob(\alpha)=\alpha \cup \mathbf e \cup \alpha
  \in \Ext^1_C(N_{C/\mathbb P}^{\vee},\mathcal O_C)$ induces 
  a sequence 
  \begin{equation}
    \label{map:primary obstruction}
    \begin{CD}
      H^0(N_{C/\mathbb P^n}^{\vee}(l))
      @>{\cup \alpha}>>
      H^0(\mathcal O_C(l))
      @>{\cup \mathbf e}>>
      H^1(N_{C/\mathbb P^n}^{\vee}(l))
      @>{\cup \mathbf \alpha}>>
      H^1(\mathcal O_C(l))
    \end{CD}
  \end{equation}
  of (degree-preserving) cup product maps on $\mathbb P^n$ (or $C$),
  where $\cup \mathbf e$ is the first coboundary map of
  $\eqref{ses:conormal}\otimes \mathcal O_{\mathbb P^n}(l)$.  
  In particular, if the composite of the cup product maps is nonzero, 
  then $\ob(\alpha)\ne 0$.
\end{lem}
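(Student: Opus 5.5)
The plan is to read the statement as a formal consequence of the identification $\ob(\alpha)=\alpha\cup\mathbf e\cup\alpha$ in $\Ext^1_C(N_{C/\mathbb P^n}^{\vee},\mathcal O_C)$, recalled just before the lemma. The isomorphisms $H^0(N_{C/\mathbb P^n})\simeq\Hom_C(N^{\vee}_{C/\mathbb P^n},\mathcal O_C)$ and $H^1(N_{C/\mathbb P^n})\simeq\Ext^1_C(N^{\vee}_{C/\mathbb P^n},\mathcal O_C)$ are used throughout, with local freeness of $N_{C/\mathbb P^n}$ guaranteed by the l.c.i.\ hypothesis. Everything is then a matter of how Yoneda/cup products act on cohomology after twisting by $\mathcal O_{\mathbb P^n}(l)$.

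First I would define the three maps.
\begin{enumerate}
\item Since $\alpha\colon N^{\vee}_{C/\mathbb P^n}\to\mathcal O_C$ is $\mathcal O_C$-linear, it twists to $\alpha(l)\colon N^{\vee}_{C/\mathbb P^n}(l)\to\mathcal O_C(l)$. Taking $H^0$ and $H^1$ gives the first and third maps $\cup\alpha$.
\item Tensoring \eqref{ses:conormal} with the invertible sheaf $\mathcal O_{\mathbb P^n}(l)$ preserves exactness. Its first connecting map $H^0(\mathcal O_C(l))\to H^1(N^{\vee}_{C/\mathbb P^n}(l))$ is, by the standard description of connecting maps as Yoneda product with the extension class, exactly cup product with $\mathbf e\in\Ext^1_{\mathbb P^n}(\mathcal O_C,N^{\vee}_{C/\mathbb P^n})$.
\item Cohomology on $\mathbb P^n$ of sheaves supported on $C$ agrees with cohomology on $C$, so the maps may be regarded on either space.
\end{enumerate}

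Next I would show that the composite of the three maps equals the map induced by $\ob(\alpha)$. The Yoneda composite $\alpha\circ\mathbf e\circ\alpha$ is an element of $\Ext^1(N^{\vee}_{C/\mathbb P^n},\mathcal O_C)$. It induces a map $H^0(N^{\vee}_{C/\mathbb P^n}(l))\to H^1(\mathcal O_C(l))$ by the pairing $H^0(\mathcal F(l))\times\Ext^1(\mathcal F,\mathcal G)\to H^1(\mathcal G(l))$, which I would view as Yoneda composition $\Ext^0(\mathcal O(-l),\mathcal F)\times\Ext^1(\mathcal F,\mathcal G)\to\Ext^1(\mathcal O(-l),\mathcal G)$.

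Associativity of the Yoneda product then shows that applying $\alpha$, then $\mathbf e$, then $\alpha$ to a section $s\in H^0(N^{\vee}_{C/\mathbb P^n}(l))=\Hom(\mathcal O(-l),N^{\vee}_{C/\mathbb P^n})$ gives the same result as $s\mapsto s\cup\ob(\alpha)$. Consequently, if the composite is nonzero for some $l$, then $\ob(\alpha)\neq0$, since the zero extension class induces the zero map.

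The main obstacle is making the compatibility between the Hom/Ext on $C$ and on $\mathbb P^n$ precise, since $\mathbf e$ lives in $\Ext^1_{\mathbb P^n}$, not $\Ext^1_C$. I would handle this as in \cite[Theorem~2.1]{Nasu5}, which I take as given. That result says the composite $\alpha\circ\mathbf e\circ\alpha$, computed on $\mathbb P^n$, lies in the image of $\Ext^1_C(N^{\vee},\mathcal O_C)\hookrightarrow\Ext^1_{\mathbb P^n}(N^{\vee},\mathcal O_C)$ and equals $\ob(\alpha)$ there. So nonvanishing in $\Ext^1_{\mathbb P^n}$ suffices, and the pairing argument can be run entirely on $\mathbb P^n$, where the functoriality of connecting maps under twisting is standard.
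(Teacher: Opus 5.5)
Your proposal is correct and follows the same route as the paper, which in fact gives no separate proof: the lemma is treated as an immediate consequence of the cup product formula $\ob(\alpha)=\alpha\cup\mathbf e\cup\alpha$ quoted from \cite[Theorem~2.1]{Nasu5}. Your unwinding of that formula, via $H^i(\mathcal F(l))\simeq\Ext^i_{\mathbb P^n}(\mathcal O_{\mathbb P^n}(-l),\mathcal F)$ and associativity of Yoneda products, is exactly the intended deduction; you also correctly note that the ``in particular'' needs only the compatibility of $\ob(\alpha)$ with its image in $\Ext^1_{\mathbb P^n}(N^{\vee}_{C/\mathbb P^n},\mathcal O_C)$.
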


We also consider deformations of closed subschemes relative to a fixed intermediate closed subscheme.
Let $S \subset X$ be a closed subscheme containing $C$ 
and such that the two embeddings
$C \hookrightarrow S$ and $S \hookrightarrow X$ are regular.
We consider the images of $\alpha$ and $\ob(\alpha)$ 
in $H^i(N_{S/X}\big{\vert}_C)$ ($i=0,1$) by the natural projection
$\pi_{C/S}: N_{C/X} \rightarrow N_{S/X}\big{\vert}_C$, respectively.
We denote the two images by $\pi_{C/S}(\alpha)$ and $\ob_S(\alpha)$,
and call them the {\em exterior components} of $\alpha$ and $\ob(\alpha)$,
respectively. The following lemma is a generalization
of {\cite[Lemma~2.4]{Mukai-Nasu}} in the case $X=\mathbb P^n$.

\begin{lem}\label{lem:reduction}
  Let $C \subset \mathbb P^n$ ($=:\mathbb P$)
  be a locally complete intersection closed subscheme
  contained in a global complete intersection subscheme $S$ in $\mathbb P^n$.
  Let $\pi_{C/S}(\alpha)$ and $\ob_S(\alpha)$ be 
  the exterior components of $\alpha$ and $\ob(\alpha)$, respectively.
  If there exists a global section $\beta$ of $N_{S/\mathbb P}$
  such that $\beta\big{\vert}_C=\pi_{C/S}(\alpha)$, then we have
  $$
  \ob_S(\alpha) = \ob(\beta)\big{\vert}_C
  $$
  in $H^1(C,N_{S/\mathbb P}\big{\vert}_C)$.
\end{lem}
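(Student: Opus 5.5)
The plan is to compare the cup-product description of $\ob(\alpha)$ with that of $\ob(\beta)$ via the natural maps relating the conormal sequences of $C$ and $S$. Set $I:=\mathcal I_{S/\mathbb P}\subset J:=\mathcal I_{C/\mathbb P}$. Since $S$ is a global complete intersection, its conormal sheaf is $N_{S/\mathbb P}^{\vee}=I/I^2$, generated by the classes of the defining equations $f_1,\dots,f_r$. The inclusion $I\subset J$ induces $I/I^2\to J/J^2$. Its restriction to $C$ is the dual $\pi_{C/S}^{\vee}:N_{S/\mathbb P}^{\vee}|_C\to N_{C/\mathbb P}^{\vee}$. It also induces a morphism of extensions
$$
\begin{CD}
0 @>>> N_{S/\mathbb P}^{\vee} @>>> \mathcal O_{\mathbb P}/I^2 @>>> \mathcal O_S @>>> 0\\
@. @VVV @VVV @VVV \\
0 @>>> N_{C/\mathbb P}^{\vee} @>>> \mathcal O_{\mathbb P}/J^2 @>>> \mathcal O_C @>>> 0.
\end{CD}
$$
From this diagram, the extension classes $\mathbf e_S$ and $\mathbf e_C$ of \eqref{ses:conormal} satisfy the compatibility $\pi_{C/S}^{\vee}\circ \mathbf e_S|_C=\mathbf e_C\circ(\text{restriction})$. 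Both sides are then classes in $\Ext^1_{\mathbb P}(\mathcal O_S, N_{C/\mathbb P}^{\vee})$, by naturality of Yoneda products under pushout and pullback.

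Next, I will write the obstructions as Yoneda composites. We have $\ob(\alpha)=\alpha\circ\mathbf e_C\circ\alpha\in\Ext^1(N_{C/\mathbb P}^{\vee},\mathcal O_C)$. Its exterior component is $\ob_S(\alpha)=\ob(\alpha)\circ\pi_{C/S}^{\vee}$, viewed in $\Ext^1(N_{S/\mathbb P}^{\vee}|_C,\mathcal O_C)=H^1(N_{S/\mathbb P}|_C)$, where we use local freeness. The hypothesis $\beta|_C=\pi_{C/S}(\alpha)$ says exactly that $\alpha\circ\pi_{C/S}^{\vee}=\beta|_C$ as maps $N_{S/\mathbb P}^{\vee}|_C\to\mathcal O_C$. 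Therefore
$$
\ob_S(\alpha)=\alpha\circ\mathbf e_C\circ\alpha\circ\pi_{C/S}^{\vee}=\alpha\circ\mathbf e_C\circ\beta|_C .
$$
It remains to show that $\alpha\circ\mathbf e_C\circ\beta|_C=(\beta\circ\mathbf e_S\circ\beta)|_C$. I will factor $\beta|_C$ as $N_{S/\mathbb P}^{\vee}|_C\to\mathcal O_C$ through $N_{S/\mathbb P}^{\vee}\xrightarrow{\beta}\mathcal O_S\to\mathcal O_C$. Then I apply the commutative diagram to move $\mathbf e_C\circ(\mathcal O_S\to\mathcal O_C)$ to $\pi_{C/S}^{\vee}\circ\mathbf e_S$. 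Using $\alpha\circ\pi_{C/S}^{\vee}=\beta|_C$ once more yields $\beta|_C\circ\mathbf e_S\circ\beta$, which is the restriction of $\ob(\beta)$ to $C$.

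The main obstacle is making these identifications rigorous. The issue is that $\ob$ was characterized in \S\ref{subsec:infinitesimal} via the cup product with $\mathbf e$ on $\mathbb P$, while the Ext groups above mix $\mathcal O_{\mathbb P}$-, $\mathcal O_S$- and $\mathcal O_C$-modules. To handle this safely, I would instead argue more concretely with \v{C}ech cochains, in the spirit of \cite[Lemma~2.4]{Mukai-Nasu}. Choose an affine cover on which $J$ is generated by a regular sequence extending $f_1,\dots,f_r$. Choose local lifts $\tilde\alpha_i$ of $\alpha$ to first-order deformations of the generators of $J$. These can be chosen so that their values on $f_1,\dots,f_r$ lift $\beta$; this is possible because $\beta$ is global and $S$ is a global complete intersection, so one may take $\tilde f_j=f_j+t g_j$ with the $g_j$ global. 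The obstruction cocycle of $\alpha$, evaluated on the $f_j$, then becomes the second-order obstruction computed from the same $g_j$. That is exactly a cocycle for $\ob(\beta)$, restricted to $C$. Checking that the choice of lifts is compatible is the delicate point.
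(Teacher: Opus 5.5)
Your Yoneda-composite argument is correct and is essentially the paper's proof. The paper uses the same morphism of conormal extensions and the same chain $\ob_S(\alpha)=(\pi_{C/S}(\alpha)\cup\mathbf e_C)\cup\alpha=\pi_{C/S}^{\vee}((\beta\cup\mathbf e_S)|_C)\cup\alpha=((\beta\cup\mathbf e_S)|_C)\cup\pi_{C/S}(\alpha)=\ob(\beta)|_C$, using $\pi_{C/S}(\alpha)=\beta|_C$ twice exactly as you do. It avoids your worry about mixed $\mathcal O_{\mathbb P}$-, $\mathcal O_S$- and $\mathcal O_C$-Ext groups, so no \v{C}ech fallback is needed, by working with the coboundary maps $H^0(\mathcal O_S(l))\to H^1(N_{S/\mathbb P}^{\vee}(l))$ and $H^0(\mathcal O_C(l))\to H^1(N_{C/\mathbb P}^{\vee}(l))$ of the twisted conormal sequences and summing over $l\in\{d_1,\dots,d_n\}$ via $N_{S/\mathbb P}\simeq\bigoplus_i\mathcal O_S(d_i)$, which is where the global complete intersection hypothesis enters.
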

\Proof
Since $S \subset \mathbb P$ is a global complete intersection,
there exist a hypersurface $Q_i$ of degree $d_i$ ($1 \le i \le n$)
such that $S=Q_1 \cap \dots \cap Q_n$, i.e., $S=S_{d_1,\dots,d_n}$,
where $n=\codim_{\mathbb P} S$.
Let $\mathbf e_C \in \Ext^1(\mathcal O_C,N_{C/\mathbb P})$ 
and $\mathbf e_S \in \Ext^1(\mathcal O_S,N_{S/\mathbb P})$ 
be the extension classes corresponding to
the exact sequences \eqref{ses:conormal}
for $C \subset \mathbb P$ and $S \subset \mathbb P$, respectively.
There exists a commutative diagram
$$
\begin{CD}
  0 @>>> N_{S/\mathbb P}^{\vee} @>>> \mathcal O_\mathbb P/\mathcal I_S^2
  @>>> \mathcal O_S @>>> 0 \\
  @. @VVV @VVV @V{|_C}VV @. \\
  0 @>>> N_{C/\mathbb P}^{\vee} @>>> \mathcal O_\mathbb P/\mathcal I_C^2
  @>>> \mathcal O_C @>>> 0
\end{CD}
$$
of $\mathcal O_{\mathbb P}$-modules.
We note that the left vertical map from $N_{S/\mathbb P}^{\vee}$ to $N_{C/\mathbb P}^{\vee}$
factors through 
$\pi_{C/S}^{\vee}: N_{S/\mathbb P}^{\vee}\big{\vert}_C 
\hookrightarrow N_{C/\mathbb P}^{\vee}$,
since $N_{C/\mathbb P}^{\vee}$ is a $\mathcal O_C$-module.
Then the diagram induces a commutative diagram
$$
\begin{CD}
  H^0(S,\mathcal O_S(l)) @>{\cup \mathbf e_S}>> H^1(S,N_{S/\mathbb P}^{\vee}(l)) \\
  @V{|_C}VV @VV{(\pi_{C/S}^{\vee}\otimes \mathcal O_C(l)) \circ |_C}V \\
  H^0(C,\mathcal O_C(l)) @>{\cup \mathbf e_C}>> H^1(C,N_{C/\mathbb P}^{\vee}(l)) \\
\end{CD}
$$
for any integer $l$. 
We note that $N_{S/\mathbb P}\simeq \oplus_{i=1}^n \mathcal O_S(d_i)$.
Taking the direct sum of the above diagram
over $l\in \left\{d_1,\dots,d_n \right\}$, 
we obtain a commutative diagram
$$
\begin{CD}
  H^0(S,N_{S/\mathbb P}) @>{\cup \mathbf e_S}>> 
H^1(S,N_{S/\mathbb P}^{\vee}\otimes N_{S/\mathbb P}) \\
  @V{|_C}VV @VV{(\pi_{C/S}^{\vee}\otimes N_{S/\mathbb P}) \circ |_C}V \\
  H^0(C,N_{S/\mathbb P}\big{\vert}_C) @>{\cup \mathbf e_C}>> 
  H^1(C,N_{C/\mathbb P}^{\vee}\otimes N_{S/\mathbb P}\big{\vert}_C).
\end{CD}
$$
By the commutativity, we have
$$
\pi_{C/S}^{\vee}((\beta \cup \mathbf e_S)\big{\vert}_C)
=\beta\big{\vert}_C \cup \mathbf e_C
= \pi_{C/S}(\alpha) \cup \mathbf e_C.
$$
There exists another commutative diagram
$$
\begin{CD}
  H^1(C,N_{C/\mathbb P}^{\vee} \otimes N_{S/\mathbb P}) 
   @.\times @.  H^0(C,N_{C/\mathbb P}) @>{\cup}>> H^1(C,N_{S/\mathbb P}\big{\vert}_C) \\
  @A{\pi_{C/S}^{\vee}}AA @.  @V{\pi_{C/S}}VV  \Vert \\
  H^1(C,N_{S/\mathbb P}^{\vee}\big{\vert}_C \otimes N_{S/\mathbb P}) 
  @.  \times @.  H^0(C,N_{S/\mathbb P}\big{\vert}_C) @>{\cup}>> H^1(C,N_{S/\mathbb P}\big{\vert}_C)
\end{CD},
$$
which shows that $\pi_{C/S}^{\vee}(A)\cup B=A \cup \pi_{C/S}(B)$
for any $A\in H^1(N_{S/\mathbb P}^{\vee}\big{\vert}_C \otimes N_{S/\mathbb P})$
and any $B\in H^0(N_{C/\mathbb P})$. Therefore,
\begin{align*}
  \ob_S(\alpha) &= (\pi_{C/S}(\alpha) \cup \mathbf e_C) \cup \alpha\\
  &= \pi_{C/S}^{\vee}((\beta \cup \mathbf e_S)\big{\vert}_C) \cup \alpha\\
  &= ((\beta \cup \mathbf e_S)\big{\vert}_C) \cup \pi_{C/S}(\alpha)\\
  &= ((\beta \cup \mathbf e_S)\big{\vert}_C) \cup \beta\big{\vert}_C\\
  &=\ob(\beta)\big{\vert}_C\qed.
\end{align*}

Let $E$ be an effective divisor on $S$ disjoint from $C$
(i.e.,~$C \cap E =\emptyset$).
Let $S^{\circ}:=S\setminus E$ and $\mathbb P^{\circ}:=\mathbb P\setminus E$
denote the complements of $E$ in $S$ and $\mathbb P$.
Every rational section $\beta$ of $N_{S/\mathbb P}$
having poles only along $E$ determines a global section 
$\beta^{\circ}$ of $N_{S^{\circ}/\mathbb P^{\circ}}$,
and the obstruction 
$\ob(\beta^{\circ}) \in H^1(N_{S^{\circ}/\mathbb P^{\circ}})$ to deforming
$S^{\circ}$ in $\mathbb P^{\circ}$ (See \S\ref{sec:with pole}).
Since $\ob(\alpha)$ is (and hence $\ob_S(\alpha)$ is) determined
by a neighborhood of $C$, we have the following
variant of Lemma~\ref{lem:reduction}.

\begin{lem}[{cf.~\cite[Lemma~2.12]{Nasu4}}]
  \label{lem:reduction2}
  Let $C \subset S \subset \mathbb P$ as in Lemma~\ref{lem:reduction},
  $E$ an effective divisor of $S$ disjoint from $C$,
  $\alpha$ a global section of $N_{C/\mathbb P}$.
  If there exists a rational section $\beta$ of $N_{S/\mathbb P}(E)$
  such that $\pi_{C/S}(\alpha)=\beta\big{\vert}_C$,
  then
  $$
  \ob_S(\alpha)=\ob(\beta^{\circ}) \big{\vert}_C
  $$
  in $H^1(N_{S/\mathbb P}\big{\vert}_C)$,
  where $\ob(\beta^{\circ})\big{\vert}_C$ is the image of
  $\ob(\beta^{\circ})$ by the map 
  $H^1(N_{S^{\circ}/\mathbb P^{\circ}}) 
  \overset{|_C}\longrightarrow 
  H^1(N_{S/\mathbb P}\big{\vert}_C)$,
  where $N_{S^{\circ}/\mathbb P^{\circ}}\big{\vert}_C \simeq N_{S/\mathbb P}\big{\vert}_C$.
\end{lem}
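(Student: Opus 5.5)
The plan is to reduce to Lemma~\ref{lem:reduction} applied to a neighbourhood of $C$, exploiting the fact that obstructions are local to a neighbourhood of the subscheme. Put $U:=\mathbb P\setminus E=\mathbb P^{\circ}$, which is an open neighbourhood of $C$ in $\mathbb P$ because $C\cap E=\emptyset$. The extension class $\mathbf e_C$ of \eqref{ses:conormal} is supported on $C$, so $\ob(\alpha)$, computed by the cup product formula, depends only on $C\subset U$. Likewise $\ob(\beta^{\circ})$ is the obstruction of the first-order deformation of $S^{\circ}=S\cap U$ inside $U$ given by $\beta^{\circ}\in H^0(N_{S^{\circ}/\mathbb P^{\circ}})$. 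Inside $U$, $S^{\circ}$ is still cut out globally by the same $n$ equations, so $N_{S^{\circ}/U}\simeq\bigoplus\mathcal O_{S^{\circ}}(d_i)$. Moreover, $\beta^{\circ}$ is an honest global section, and it satisfies $\beta^{\circ}\big\vert_C=\beta\big\vert_C=\pi_{C/S}(\alpha)$, since the pole of $\beta$ lies along $E$ and $E$ misses $C$.

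Next I will rerun the proof of Lemma~\ref{lem:reduction} verbatim with $\mathbb P$ replaced by $U$ and $S$ by $S^{\circ}$. The commutative diagram of conormal sequences for $S^{\circ}\subset U$ and $C\subset U$ is restricted from the one on $\mathbb P$. The compatibility of $\cup\mathbf e$ with restriction, taken with coefficients in $\mathcal O(l)$ for $l=d_i$, holds on $U$ because it is just functoriality of the coboundary maps under restriction $H^0(S^{\circ},\cdot)\to H^0(C,\cdot)$. Note that no global properties of $\mathbb P$ (such as properness) were used in that lemma. This yields
$$\ob_S(\alpha)=((\beta^{\circ}\cup\mathbf e_{S^{\circ}})\big\vert_C)\cup\beta^{\circ}\big\vert_C=\ob(\beta^{\circ})\big\vert_C .$$
Here I use the identifications $N_{S^{\circ}/\mathbb P^{\circ}}\big\vert_C\simeq N_{S/\mathbb P}\big\vert_C$ and $N_{C/U}=N_{C/\mathbb P}$, and I use that $\ob(\beta^{\circ})\in H^1(N_{S^{\circ}/\mathbb P^{\circ}})$ is given by the same formula $\beta^{\circ}\cup\mathbf e_{S^{\circ}}\cup\beta^{\circ}$.

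I expect the main obstacle to be justifying that $\ob(\beta^{\circ})$ for the non-proper pair $S^{\circ}\subset\mathbb P^{\circ}$ is still given by the cup-product formula and lies in $H^1(N_{S^{\circ}/\mathbb P^{\circ}})$. For this I would invoke \cite[Theorem~4.3.5]{Sernesi} and \cite[Theorem~2.1]{Nasu5}, since both hold for regular embeddings in arbitrary schemes, or else cite \cite[Lemma~2.12]{Nasu4}, where exactly this argument is carried out. I would also check that the restriction map $H^1(N_{S^{\circ}/\mathbb P^{\circ}})\to H^1(N_{S/\mathbb P}\big\vert_C)$ is the one used in the statement, and that it is compatible with the cup products.
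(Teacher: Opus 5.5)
Your proposal is correct and follows essentially the paper's route. The paper justifies this lemma only by the remark that $\ob(\alpha)$, and hence $\ob_S(\alpha)$, is determined by a neighbourhood of $C$, so that Lemma~\ref{lem:reduction} applies to $C\subset S^{\circ}\subset\mathbb P^{\circ}$ with $\beta^{\circ}$ (together with a pointer to \cite[Lemma~2.12]{Nasu4}); you have spelled out the details of exactly that reduction.
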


\subsection{Hilbert-flag schemes and relative Picard schemes}
\label{subsec:flag and picard}

In this subsection, we recall basic results on Hilbert-flag schemes and 
relative Picard schemes. For further details of the results in this subsection, 
we refer to \cite{Kleppe87,Kleppe81,Kleppe-Ottem15,Sernesi}, etc.

Let $X$ be a projective scheme, 
a pair $(C,S)$ of two closed subschemes of $X$ with 
Hilbert polynomials $(P,Q)$ and such that $C \subset S \subset X$.
Then there exists a projective scheme $\HF_{P,Q} X$,
called the {\em Hilbert-flag scheme} of $X$,
which parametrizes all pairs $(C',S')$ of closed subschemes of $X$ 
with Hilbert polynomials $(P,Q)$ such that $C' \subset S' \subset X$
(cf.~\cite{Kleppe87,Sernesi}).
Let $H_Q:=\Hilb_Q X$ be the Hilbert scheme of $X$ with Hilbert polynomial $Q$,
$\mathcal S$ the universal family 
$X\times H_Q \supset \mathcal S \rightarrow H_Q$.
Then we may view $\HF_{P,Q} X$ as a relative Hilbert scheme $\Hilb_P \mathcal S$
of $\mathcal S/H_Q$ (cf.~\cite[Theorem~24.7]{Hartshorne10}).
Let $\HF X$ denote the disjoint union
$\bigsqcup_{P,Q} \HF_{P,Q} X$,~i.e.,~the full Hilbert-flag scheme of $X$.
There exist two natural morphisms $pr_i: \HF X \rightarrow \Hilb X$ ($i=1,2$)
corresponding to projections 
$(C,S) \mapsto [C]$ and $(C,S) \mapsto [S]$, respectively.
Let $N_{(C,S)/X}$ denote the normal sheaf of the pair $(C,S)$ in $X$
that is defined by the fiber product of 
$N_{C/X} \overset{\pi_{C/S}}\longrightarrow N_{S/X}\big{\vert}_C$ and
$N_{S/X} \overset{|_C}{\longrightarrow} N_{S/X}\big{\vert}_C$
over $N_{S/X}\big{\vert}_C$ (cf.~\cite[Definition~4.5.2]{Sernesi}).
If the two embeddings
$C \hookrightarrow S$ and $S\hookrightarrow X$ are regular, then
\begin{enumerate}
  \item the cohomology groups $H^i(N_{(C,S)/X})$ ($i=0,1$) 
  respectively represent
  the tangent space and the obstruction space of $\HF X$ at $(C,S)$
  (cf.~\cite[Proposition~4.5.3]{Sernesi}).
  In particular, the vanishing of $H^1(N_{(C,S)/X})$ implies that
  $\HF X$ is nonsingular at $(C,S)$.
  \item there exist two natural exact sequences
  \begin{align}
    \label{ses:normal1}
    0 \longrightarrow \mathcal I_{C/S}\otimes_S N_{S/X} 
    \longrightarrow & N_{(C,S)/X} \overset{\pi_1}{\longrightarrow}
    N_{C/X} \longrightarrow 0\\
    \label{ses:normal2}
    0 \longrightarrow N_{C/S} \longrightarrow & N_{(C,S)/X} 
    \overset{\pi_2}{\longrightarrow} N_{S/X} \longrightarrow 0
  \end{align}
  on $X$, where each $\pi_i$ ($i=1,2$)
  induces the tangent map and the map on obstruction spaces of $pr_i$
  (cf.~\cite[\S2.2]{Nasu6}).
  \item \label{item:HF4} $pr_1$ is smooth at $(C,S)$ if
  $H^1(\mathcal I_{C/S}\otimes_S N_{S/X})=0$, and 
  so is $pr_2$ if $H^1(N_{C/S})=0$.
  (cf.~\cite[Lemma~A10]{Kleppe87}, \cite[Theorem~1.3.4]{Kleppe81}).
\end{enumerate}
Given a point $(C,S)$ of $\HF X$,
if $\HF X$ is nonsingular at a point $(C,S)$
and $pr_1$ is smooth at the same point,
then by a property of smooth morphisms, $pr_1$ maps
the unique irreducible component $\mathcal W_{C,S}$
of $\HF X$ passing through $(C,S)$
to the unique irreducible component $W_{C,S}$ of 
$\Hilb X$ passing through $[C]$.
Moreover, $\Hilb X$ is generically smooth along $W_{C,S}$
(cf.~\cite[Theorem~2.4]{Nasu6}).
The image $W_{C,S}$ of $\mathcal W_{C,S}$ by $pr_1$ is called
the {\em $S$-maximal family} of $\Hilb X$ containing $C$,
where its definition was originated from \cite{Mukai-Nasu}.
When $pr_1$ is not smooth, we consider an exact sequence
\begin{equation}
  \label{ses:first tangent map}
  \begin{CD}
    H^0(X,N_{(C,S)/X})
    \overset{p_1}{\longrightarrow} H^0(C,N_{C/X})
    \longrightarrow H^1(S,\mathcal I_{C/S}\otimes_S N_{S/X})
    \longrightarrow H^1(X,N_{(C,S)/X})
  \end{CD}
\end{equation}
of cohomology groups, which is induced by \eqref{ses:normal1}.
The following lemma is necessary for our proof of Theorem~\ref{thm:main1}.
\begin{lem}
  \label{lem:maximality}
  If $C \hookrightarrow S\hookrightarrow X$ are regular embeddings,
  $\HF X$ is nonsingular at $(C,S)$ and
  the primary obstruction $\ob(\alpha)$ is nonzero in $H^1(N_{C/X})$
  for every global section $\alpha \in H^0(N_{C/X})\setminus \im p_1$,
  then 
  \begin{enumerate}
    \item the $S$-maximal family $W_{C,S}$ is an irreducible 
    component of $(\Hilb X)_{\red}$, and
    \item if $H^0(\mathcal I_{C/S}\otimes_S N_{S/X})=0$,
    then
    $$
    \dim_{[C]} \Hilb X = \dim_{(C,S)} \HF X.
    $$
  \end{enumerate}
\end{lem}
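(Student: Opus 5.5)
The proof goes through the infinitesimal lifting criterion, comparing the component $\mathcal W_{C,S}$ of $\HF X$ with the Zariski tangent space of $\Hilb X$ at a general point of its image.

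Let $\mathcal W_{C,S}$ be the unique irreducible component of $\HF X$ through $(C,S)$; it is unique and reduced there because $\HF X$ is nonsingular at $(C,S)$. Its image $W_{C,S}=pr_1(\mathcal W_{C,S})$ is irreducible and closed, since $pr_1$ is proper ($\HF X$ is projective). The hypotheses (nonsingularity of $\HF X$, and $\ob(\alpha)\ne 0$ for all $\alpha\notin\im p_1$) are open, or at least persist on a dense open subset of $\mathcal W_{C,S}$ by semicontinuity. We may therefore assume $(C,S)$ is general in $\mathcal W_{C,S}$.

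For (1), suppose $W_{C,S}$ is not a component of $(\Hilb X)_{\red}$. Then $W_{C,S}\subsetneq Z$ for some irreducible component $Z$, with $[C]$ general in $W_{C,S}$. Since $[C]$ lies in the closure of $Z\setminus W_{C,S}$, we can choose the germ of a smooth pointed curve $(T,0)\to Z$ through $[C]$ whose generic point lies off $W_{C,S}$. This gives a deformation of $C$ over $k[[t]]$, hence liftable to every order.

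Let $\alpha\in H^0(N_{C/X})$ be its first-order part, after reparametrizing so that $\alpha\ne0$.
\begin{itemize}
\item If $\alpha\notin\im p_1$, then $\ob(\alpha)\ne0$ by hypothesis, contradicting liftability to $k[t]/(t^3)$.
\item If $\alpha\in\im p_1$, we must rule this out. The cleanest way is to work with the tangent cone rather than a single curve. Any curve germ in $Z$ has tangent vectors $\alpha$ satisfying $\ob(\alpha)=0$, so every vector of the tangent cone $TC_{[C]}(\Hilb X)$ lies in $\im p_1=T_{[C]}W_{C,S}$. Here $T_{[C]}W_{C,S}$ denotes the image of the tangent map of $pr_1$ on the smooth $\mathcal W_{C,S}$, so $\dim\im p_1\ge \dim W_{C,S}$. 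Because the tangent cone of $Z$ at $[C]$ has dimension $\dim Z$, we get $\dim Z\le\dim\im p_1$. It remains to show $\dim \im p_1=\dim W_{C,S}$ at general $[C]$. Generic smoothness of $pr_1\colon \mathcal W_{C,S}\to W_{C,S}$ (in characteristic $0$), or more robustly a dimension count of the general fibre, would give $\dim Z\le \dim W_{C,S}$ and hence a contradiction.
\end{itemize}

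In positive characteristic the second case is delicate. There I would argue instead with $\Spec k[t]/(t^3)$-points of $Z$ whose first-order part is tangent to $Z$ but not to $W_{C,S}$. Such points exist at a general smooth point of $Z_{\red}$ near $[C]$ if we replace $[C]$ by a nearby point of $W_{C,S}$ at which $Z_{\red}$'s tangent space strictly contains $\im p_1$. I expect this, controlling the image of $p_1$ against the tangent cone of a larger component, to be the main obstacle.

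For (2), sequence \eqref{ses:first tangent map} together with $H^0(\mathcal I_{C/S}\otimes_S N_{S/X})=0$ makes $p_1$ injective. So $\dim \im p_1=h^0(N_{(C,S)/X})=\dim_{(C,S)}\HF X$, using nonsingularity. By (1) and the argument above, $W_{C,S}$ is the component of $\Hilb X$ through $[C]$, so $\dim_{[C]}\Hilb X=\dim W_{C,S}$. Injectivity of $p_1$ on tangent spaces at general points forces $pr_1|_{\mathcal W_{C,S}}$ to be generically finite onto its image, hence quasi-finite near $(C,S)$. This gives $\dim W_{C,S}=\dim \mathcal W_{C,S}=\dim_{(C,S)}\HF X$, as claimed.
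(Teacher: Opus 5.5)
The paper gives no argument of its own here (it defers to \cite[Lemma~2.17]{Nasu9}), so your proof has to stand on its own. As written it does not, because the decisive inequality $\dim\im p_1\le\dim W_{C,S}$ is never established. You try to get it from generic smoothness of $pr_1\colon\mathcal W_{C,S}\to W_{C,S}$. That fails in characteristic $p>0$: the paper works in arbitrary characteristic, and you leave that case as an open ``obstacle''. It also requires the obstruction hypothesis at a \emph{general} point of $\mathcal W_{C,S}$. Your claim that this hypothesis persists ``by semicontinuity'' is unjustified: the condition that the zero set of $\alpha\mapsto\ob(\alpha)$ lies in the varying subspace $\im p_1$ is not of semicontinuous type, and the lemma is stated at the given point. (The inequality you do write, $\dim\im p_1\ge\dim W_{C,S}$, is neither true in general nor the one you need.)

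The missing idea is elementary and works in every characteristic. The kernel $\ker p_1=H^0(\mathcal I_{C/S}\otimes_S N_{S/X})$ is the Zariski tangent space at $(C,S)$ of the fibre $pr_1^{-1}([C])$, so that fibre has dimension at most $\dim\ker p_1$ at $(C,S)$. Since $\HF X$ is nonsingular there, $\dim\mathcal W_{C,S}=h^0(N_{(C,S)/X})$. The fibre-dimension theorem for the dominant map $\mathcal W_{C,S}\to W_{C,S}$ then gives $\dim W_{C,S}\ge h^0(N_{(C,S)/X})-\dim\ker p_1=\dim\im p_1$. Combined with $\dim_{[C]}\Hilb X\le\dim\im p_1$, this gives $\dim W_{C,S}=\dim_{[C]}\Hilb X=\dim\im p_1$ at $(C,S)$ itself. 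That yields (1) at once, since any component containing $W_{C,S}$ has dimension at most $\dim_{[C]}\Hilb X$, and it yields (2) when $\ker p_1=0$. No curve germs, reduction to general points, or generic smoothness are needed.

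The bound $\dim_{[C]}\Hilb X\le\dim\im p_1$ also needs a correct justification. ``After reparametrizing so that $\alpha\ne0$'' is false. An arc through a singular point can have zero first-order part, and no reparametrization changes that (think of $t\mapsto(t^2,t^3)$ on a cusp). So your sentence about curve germs does not show that the tangent cone lies in $\im p_1$. What is true is that every $k$-point $v$ of $\mathrm{TC}_{[C]}(\Hilb X)$ extends to a $k[t]/(t^3)$-point. In a local embedding with ideal $I$, expand $f=f_1+f_2+\cdots$ at $[C]$. Membership in the tangent cone gives $f_1(v)=0$ for all $f\in I$, and $f_2(v)=0$ whenever $f_1=0$. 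Hence the linear form $f\mapsto f_2(v)$ factors through $f\mapsto f_1$, so it equals $f\mapsto -f_1(w)$ for some vector $w$, and $vt+wt^2$ is a second-order lift. By hypothesis every such $v$ lies in $\im p_1$. Since $\dim\mathrm{TC}_{[C]}(\Hilb X)=\dim_{[C]}\Hilb X$, the bound follows.
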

\Proof See \cite[Lemma~2.17]{Nasu9} for the proof.
In fact, the assumption that $H^1(N_{(C,S)/X})=0$ of the same lemma
can be replaced by the nonsingularity of $\HF X$ at $(C,S)$. \qed

\medskip

In what follows,
we assume that $S$ and $X$ are nonsingular schemes, and 
$C \subset S$ is an effective Cartier divisor on $S$.
We consider the relative Picard schemes $\Pic \mathcal S$
of $\mathcal S/H_Q$.
Let $\Hilb^{cd} \mathcal S$ denote the relative Hilbert scheme of 
effective Cartier divisors on $\mathcal S/H_Q$,~i.e.,
the open subscheme of the Hilbert-flag scheme of $\HF X$
parametrizing all pairs $(C',S')$ of a closed subscheme $S' \subset X$
and an effective Cartier divisor $C' \subset S'$.
Then there exists a natural morphism
\begin{equation}
  \label{map: hilb to pic}
  \pi: \Hilb^{cd} \mathcal S \longrightarrow \Pic \mathcal S, \qquad
  (C',S') \mapsto ([\mathcal O_{S'}(C')],S'),
\end{equation}
where $\mathcal O_{S'}(C')$ is the invertible sheaf associated to $C'$, 
and $[\mathcal L]$ is the isomorphism class of
an invertible sheaf $\mathcal L$ on $S$.
Obviously, we have $pr_2\circ \pi=pr_2'$, where 
$pr_2': \Pic \mathcal S \rightarrow H_Q$ is a natural projection.
In what follows, we collect some infinitesimal properties of 
$\Hilb^{cd} \mathcal S$ and $\Pic \mathcal S$.
\begin{enumerate}
  \item \label{item:smoothness of pi}
  $\pi$ is smooth at $(C,S)$ if $H^1(S,\mathcal O_S(C))=0$
  (cf.~\cite[Remark~4.5]{TDTEGA5}).
  \item Let $\mathcal L=\mathcal O_S(C)$ in $\Pic S$ and let
  $T_{\Pic \mathcal S, ([\mathcal L],S)}$ denote the tangent space of $\Pic \mathcal S$
  at $([\mathcal L],S)$. Then there exists a commutative diagram
  \begin{equation}
    \label{diag:cohomology rel. hilb and pic}
    \begin{CD}
      0 @>>> H^0(N_{C/S}) @>>> H^0(N_{(C,S)/X}) @>{p_2}>> H^0(N_{S/X}) @ >{\delta_C}>> H^1(N_{C/S}) \\
      @. @VVV @V{\kappa}VV \Vert @. @VVV \\
      0 @>>> H^1(\mathcal O_S) @>>> T_{\Pic \mathcal S, ([\mathcal L],S)}
      @>>> H^0(N_{S/X}) @>{\delta_{\mathcal L}}>> H^2(\mathcal O_S)
    \end{CD}
  \end{equation}
  of exact sequences (cf.~\cite[\S2.2]{Kleppe-Ottem15}).
  Here the first horizontal sequence is the long exact sequence
  induced from \eqref{ses:normal2}, while 
  the first and the last vertical maps are 
  the coboundary maps induced by the exact sequence
  $0 \rightarrow \mathcal O_S \rightarrow \mathcal L
  \rightarrow N_{C/S} \rightarrow 0$ on $S$.
  The second vertical map $\kappa$
  represents the tangent map of $\pi$ at $(C,S)$, and
  if $H^1(S,\mathcal L)=0$, then $\kappa$ is surjective by the four lemma.
  If moreover, $H^1(N_{S/X})=0$, then the obstruction space of $\Hilb^{cd} \mathcal S$ at $(C,S)$
  is isomorphic to $\coker \delta_C$, and that of $\Pic \mathcal S$ at $([\mathcal L],S)$
  is isomorphic to $\coker \delta_{\mathcal L}$.
  Then by the four lemma again,
  $H^1(S,\mathcal L)=0$ implies that
  the map between the two obstruction spaces induced by $\pi$ is injective.
  Thus the first infinitesimal property \eqref{item:smoothness of pi} follows.
  \item By the first coboundary map of the exact sequence
  $0 \rightarrow T_S \rightarrow T_X\big{\vert}_S 
  \rightarrow N_{S/X} \rightarrow 0$,
  every global section $\beta$ of $N_{S/X}$ is mapped to
  an element $\tau$ in $H^1(T_S)$
  corresponding to a first-order deformation $\tilde S$ of 
  an abstract (non-embedded) variety $S$.
  Let $c(\mathcal L)\in H^1(\Omega^1_S)$
  ($\simeq \Ext^1(T_S,\mathcal O_S)$)
  denote the Atiyah extension class of $\mathcal L$,
  that is the image of $[\mathcal L] \in H^1(\mathcal O_S^\times)$
  ($\simeq \Pic S$) by the group homomorphism
  $c: H^1(\mathcal O_S^{\times})\rightarrow H^1(\Omega^1_S)$
  induced by the logarithmic derivative map 
  $d \log: \mathcal O_S^{\times} \rightarrow \Omega^1_S$, $u \mapsto du/u$
  (cf.~\cite[Chap.~V,~Ex.~1.8]{Hartshorne}, \cite[\S3.3.3]{Sernesi}).
  Then the coboundary map $\delta_{\mathcal L}$
  in the above diagram \eqref{diag:cohomology rel. hilb and pic}
  is defined by the cup product
  \begin{equation}
    \label{eqn:definition of delta}
    \delta_{\mathcal L}(\beta)=\tau \cup c(\mathcal L)
  \end{equation}
  via the natural cup product map
  $$H^1(S,T_S) \times H^1(S,\Omega^1_S) 
  \overset{\cup}\longrightarrow H^2(S,\mathcal O_S)$$ and
  $\mathcal L$ lifts to $\tilde S$ if and only if
  $\delta_{\mathcal L}(\beta)=0$ (cf.~\cite[Theorem~3.3.11]{Sernesi}).
  In particular, by the additive property
  $c(\mathcal L\otimes \mathcal M)=c(\mathcal L)+c(\mathcal M)$
  of the Atiyah extension classes, we have
  $\delta_{\mathcal L\otimes \mathcal M}=\delta_{\mathcal L}+\delta_{\mathcal M}$
  and $\delta_{\mathcal L^{\otimes n}}=n\delta_{\mathcal L}$
  for all $\mathcal L,\mathcal M \in \Pic S$ and any integer $n$.
  \item Let $\iota: S\hookrightarrow X$ denote 
  the closed embedding of $S$ into $X$. Then we have
  $\delta_{\mathcal L}=0$ if $\mathcal L$
  is contained in the image of $\iota^*: \Pic X \rightarrow \Pic S$
  (cf.~\cite[Lemma~2.11]{Nasu6}).
\end{enumerate}

The following lemma allows us to deduce the nonsingularity and 
local dimension of the Hilbert-flag scheme at $(C,S)$ from those at $(E,S)$, 
under a suitable linear equivalence between $C$ and $E$.

\begin{lem}
\label{lem:smoothness of flag1}
Let $X$ be a smooth projective scheme, 
$S \subset X$ a smooth closed subscheme such that $H^1(N_{S/X})=0$,
$\mathcal P$ an invertible sheaf on $X$, and
$C,E$ two effective Cartier divisors on $S$.
Suppose that there exist
two integers $a$ and $b$ such that $b\not\equiv 0 \mod p$, where
$p=\car k$, and 
$$
C \sim a \mathcal P\big{\vert}_S + b E \qquad \mbox{in $\Pic S$}.
$$
and we have $H^1(\mathcal O_S(C))=H^1(\mathcal O_S(E))=0$. Then
$\HF X$ is nonsingular at $(C,S)$ if and only if
$\HF X$ is nonsingular at $(E,S)$, and 
$$
\dim_{(C,S)} \HF X - \dim |C| = \dim_{(E,S)} \HF X - \dim |E|,
$$
where $|D|$ denotes the complete linear system on $S$
spanned by a divisor $D$ on $S$.
\end{lem}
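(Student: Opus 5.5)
The plan is to compare the two points $(C,S)$ and $(E,S)$ of $\HF X$ through the relative Picard scheme $\Pic \mathcal S$. The comparison uses the diagram \eqref{diag:cohomology rel. hilb and pic} together with the additivity of the maps $\delta_{\mathcal L}$.

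First, since $H^1(\mathcal O_S(C))=H^1(\mathcal O_S(E))=0$, property \eqref{item:smoothness of pi} shows that $\pi$ is smooth at $(C,S)$ and at $(E,S)$. Locally on $\Hilb^{cd}\mathcal S$ (an open subscheme of $\HF X$), the morphism $\pi$ is then a smooth morphism. Near $(C,S)$ its relative dimension is $h^0(\mathcal O_S(C))-1=\dim|C|$: the fiber is the linear system, and the kernel of $\kappa$ is $H^0(N_{C/S})/H^1(\mathcal O_S)$-image, i.e.\ $H^0(\mathcal O_S(C))/k$. Near $(E,S)$ the relative dimension is likewise $\dim|E|$. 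Hence $\HF X$ is nonsingular at $(C,S)$ if and only if $\Pic\mathcal S$ is nonsingular at $([\mathcal O_S(C)],S)$, and in that case
$$\dim_{(C,S)}\HF X-\dim|C|=\dim_{([\mathcal O_S(C)],S)}\Pic\mathcal S .$$
The same statement holds for $E$. It therefore suffices to show that $\Pic\mathcal S$ has isomorphic local structure at $\mathcal L:=\mathcal O_S(C)$ and at $\mathcal M:=\mathcal O_S(E)$, or at least the same smoothness and dimension.

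To compare, use $H^1(N_{S/X})=0$. Then $H_Q$ is smooth at $[S]$, the tangent space of $\Pic\mathcal S$ is given by the lower row of \eqref{diag:cohomology rel. hilb and pic}, and the obstruction space is $\coker\delta_{\mathcal L}$. By additivity and property (4), $\delta_{\mathcal L}=a\,\delta_{\mathcal P|_S}+b\,\delta_{\mathcal M}=b\,\delta_{\mathcal M}$. Since $b$ is invertible in $k$, the maps $\delta_{\mathcal L}$ and $\delta_{\mathcal M}$ have the same kernel and the same cokernel. Consequently the tangent spaces at the two points have equal dimension, $h^1(\mathcal O_S)+\dim\ker\delta_{\mathcal M}$.

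Tangent-space equality alone does not give equality of local dimensions, so I would argue geometrically. The fibers of $pr_2'$ over $H_Q$ near $[S]$ are torsors under $\Pic^0$, which is smooth of dimension $h^1(\mathcal O_S)$ in characteristic zero (in general one has to be careful). The locus of $H_Q$ over which $\mathcal L$ extends coincides formally with the locus over which $\mathcal M$ extends: the twist by the pullback of $\mathcal P$ always extends, and $\mathcal L\otimes\mathcal P^{-a}\simeq\mathcal M^{\otimes b}$. One then needs that $\mathcal M^{\otimes b}$ extends over an Artinian base if and only if $\mathcal M$ does. The obstruction for $\mathcal M^{\otimes b}$ along a small extension is $b$ times that for $\mathcal M$ in $H^2(\mathcal O_S)$, since obstructions to lifting line bundles are additive (the $d\log$/Čech cocycle computation). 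So with $p\nmid b$, the two deformation functors of pairs (surface, line bundle) over the smooth base $H_Q$ are obstructed identically. More precisely, the map $\mathcal M\mapsto\mathcal M^{\otimes b}\otimes\mathcal P^{a}$ induces a morphism of functors $\mathrm{Def}_{(S,\mathcal M)}\to\mathrm{Def}_{(S,\mathcal L)}$ over $\mathrm{Def}_S^{\rm emb}$. It is bijective on tangent spaces, via multiplication by $b$ on $H^1(\mathcal O_S)$ and identity on $\ker\delta$, and injective on obstructions, via multiplication by $b$ on $H^2(\mathcal O_S)$. Hence it is an isomorphism of pro-representing hulls, i.e.\ $\widehat{\mathcal O}_{\Pic\mathcal S,\mathcal M}\simeq\widehat{\mathcal O}_{\Pic\mathcal S,\mathcal L}$. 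This gives both the nonsingularity equivalence and the dimension equality.

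The main obstacle is this last step: checking carefully that the obstruction to extending $\mathcal M^{\otimes b}$ is exactly $b$ times that of $\mathcal M$ over arbitrary small extensions, not just first order, and invoking the standard criterion that a morphism of deformation functors with hulls that is bijective on tangent spaces and injective on obstructions is formally smooth of relative dimension zero, hence an isomorphism.
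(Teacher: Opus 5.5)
Your proposal is correct and follows essentially the same route as the paper. Both arguments get smoothness of $\pi$ at $(C,S)$ and $(E,S)$ from $H^1(\mathcal O_S(C))=H^1(\mathcal O_S(E))=0$, reduce to $\Pic\mathcal S$, use $\delta_{\mathcal O_S(C)}=b\,\delta_{\mathcal O_S(E)}$ from additivity of Atiyah classes, show that $\mathcal M\mapsto \mathcal P|_{S'}^{\otimes a}\otimes\mathcal M^{\otimes b}$ induces an isomorphism of local deformation functors, and read off the dimension formula from the fibers $|C|$ and $|E|$ of $\pi$; the one difference is that you spell out the higher-order obstruction comparison (multiplication by $b$ on $H^2(\mathcal O_S)$), which the paper delegates to Ellingsrud--Peskine and Kleppe--Ottem.
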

\Proof
The proof is similar to that of \cite[Lemma~2.2]{Kleppe-Ottem15}.
Suppose that $\HF X$ is nonsingular at $(E,S)$.
Then it follows from $H^1(\mathcal O_S(E))=0$ that
$\pi: \Hilb^{cd} \mathcal S \rightarrow \Pic \mathcal S$ is smooth at $(E,S)$.
This implies that
$\Pic \mathcal S$ is nonsingular at $([\mathcal O_S(E)],S)$.
Then $\Pic \mathcal S$ is nonsingular at $([\mathcal O_S(C)],S)$, because
there exists an isomorphism
$\mathcal O_{\Pic \mathcal S, ([\mathcal O_S(E)],S)}
\simeq 
\mathcal O_{\Pic \mathcal S, ([\mathcal O_S(C)],S)}$ of local rings, 
up to completion. 
Indeed, by additivity and $c(\mathcal P\big{\vert}_S)=0$, we see that
$$
c(\mathcal O_S(C))=ac(\mathcal P\big{\vert}_S)+b c(\mathcal O_S(E))
=b c(\mathcal O_S(E)).
$$
Therefore, the two coboundary maps $\delta_{\mathcal O_S(C)}$ 
and $\delta_{\mathcal O_S(E)}$ coincide
up to the nonzero constant $0 \ne b \in k$, since $p=\car k \nmid b$.
Then it follows from a diagram similar to 
\eqref{diag:cohomology rel. hilb and pic} that
the morphism between the local deformation functors of 
$\Pic \mathcal S$ at $([\mathcal O_S(E)],S)$ 
and $\Pic \mathcal S$ at $([\mathcal O_S(C)],S)$
induced by the map
$\mathcal O_{S'}(E') \mapsto \mathcal P\big{\vert}_{S'}^{\otimes a} \otimes
\mathcal O_{S'}(E')^{\otimes b}$ is an isomorphism
(see also \cite[Proposition~2, Construction~2]{Ellingsrud-Peskine} 
and the proof of \cite[Lemma~2.2]{Kleppe-Ottem15}).
Since $\Pic \mathcal S$ is nonsingular at $([\mathcal O_S(C)],S)$,
$\HF X$ is nonsingular at $(C,S)$ by $H^1(\mathcal O_S(C))=0$.
Conversely, if $\HF X$ is nonsingular at $(C,S)$,
then we obtain the nonsingularity of $\HF X$
at $(E,S)$ by the same argument.
Since the fiber of $\pi$ in \eqref{map: hilb to pic} at
$([\mathcal O_S(C)],S)$ is clearly isomorphic to $|C|$ on $S$,
and since smooth morphisms have surjective tangent maps,
we also get the dimension formula in the lemma.
\qed

\section{Infinitesimal deformations with pole}
\label{sec:with pole}

This section is devoted to the proof of Theorem~\ref{thm:main2}.
We first recall basic notations and a result in \cite{Nasu4}.
Let $X$ be a smooth projective variety of dimension at least $3$,
$S$ a smooth surface in $X$,
$E$ a smooth connected curve in $S$.
We put $X^{\circ}:=X\setminus E$ and $S^{\circ}:=S\setminus E$, 
the complementary open subvarieties.
Let $\iota: S^{\circ} \hookrightarrow S$ be the open immersion.
Since $E$ is an effective divisor on $S$, for any integer $m \ge 0$,
there exist natural inclusions
$\mathcal O_S(mE) \hookrightarrow \iota_*\mathcal O_{S^{\circ}}$
and 
$N_{S/X}(mE) \hookrightarrow \iota_* N_{S^{\circ}/X^{\circ}}$ of sheaves.
The last inclusion induces
$$
H^0(S,N_{S/X}(mE)) \hookrightarrow 
H^0(S^{\circ},N_{S^{\circ}/X^{\circ}})
$$
and in particular, every rational section $\beta$ of $N_{S/X}$
having a pole only along $E$ and of order one,
that is, $\beta \in H^0(N_{S/X}(E))\setminus H^0(N_{S/X})$,
defines a global section $\beta^{\circ}$ of $N_{S^{\circ}/X^{\circ}}$.
Then $\beta^{\circ}$ corresponds to a first-order infinitesimal deformation 
of $S^{\circ}$ in $X^{\circ}$.
Such rational sections $\beta$ of $N_{S/X}$ 
have been studied in \cite{Mukai-Nasu,Nasu4,Nasu5}
and $\beta$ is called an {\em infinitesimal deformation with a pole}.
In what follows, we identify $\beta$ and $\beta^{\circ}$
by the above natural inclusions.

\begin{thm}[{\cite{Nasu4}}]\label{thm:open surface in 3-$fold}
  If $\dim X=3$,
  $E^2<0$, $\det N_{E/X}:=\bigwedge^2 N_{E/X}$ is trivial, and
  if the exact sequence
  \begin{equation}\label{ses:normal bundle of line}
    0 \longrightarrow N_{E/S} \longrightarrow 
    N_{E/X} \longrightarrow N_{S/X}\big{\vert}_E
    \longrightarrow 0
  \end{equation}
  does not split, then the first-order infinitesimal deformation of 
  $S^{\circ} \subset X^{\circ}$ induced by any element of
  $H^0(N_{S/X}(E))\setminus H^0(N_{S/X})$
  does not lift to a deformation over $\Spec k[t]/(t^3)$.
\end{thm}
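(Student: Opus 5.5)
The plan is to compute the primary obstruction $\ob(\beta^{\circ})$ explicitly as a class with a bounded pole along $E$. I would then detect its nonvanishing through its ``principal part'' on $E$, which is controlled by the extension class of \eqref{ses:normal bundle of line}.

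\emph{Step 1: the restriction of $\beta$ to $E$.} Since $\det N_{E/X}\simeq N_{E/S}\otimes N_{S/X}\big\vert_E$ is trivial and $N_{E/S}\simeq\mathcal O_E(E)$, we get $N_{S/X}\big\vert_E\simeq\mathcal O_E(-E)$. Hence $N_{S/X}(E)\big\vert_E\simeq\mathcal O_E$. From $0\to N_{S/X}\to N_{S/X}(E)\to N_{S/X}(E)\big\vert_E\to 0$, an element $\beta\in H^0(N_{S/X}(E))\setminus H^0(N_{S/X})$ has nonzero image $\beta\big\vert_E\in H^0(\mathcal O_E)=k$. So $\beta\big\vert_E$ is a nowhere vanishing constant.

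\emph{Step 2: obstruction with poles.} Work with a Čech cover of $X$ adapted to $E$, and lift $\beta$ locally to sections of $N_{S/X}(E)$ on $X$, i.e.\ to first-order perturbations of local equations of $S$ with poles of order one along $E$. Running the cup product formula $\ob=\beta\cup\mathbf e\cup\beta$ of \S\ref{subsec:infinitesimal} with these lifts, the second-order correction picks up at most three poles: two from the two copies of $\beta$ and one from differentiating the pole through the extension class $\mathbf e$. This should produce a class
\[
\widetilde{\ob}(\beta)\in H^1(S,N_{S/X}(3E))
\]
whose image in $H^1(S^{\circ},N_{S^{\circ}/X^{\circ}})=\varinjlim_l H^1(S,N_{S/X}(lE))$ is $\ob(\beta^{\circ})$. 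Restricting to $E$ lands in $H^1(E,N_{S/X}(3E)\big\vert_E)\simeq H^1(E,\mathcal O_E(2E))$. This is exactly where the extension class of \eqref{ses:normal bundle of line} lives, since $\Ext^1(N_{S/X}\big\vert_E,N_{E/S})\simeq H^1(N_{E/S}^{\otimes 2})=H^1(\mathcal O_E(2E))$. The local computation should give
\[
\widetilde{\ob}(\beta)\big\vert_E=\pm(\beta\big\vert_E)^2\cdot\mathbf e_{E},
\]
where $\mathbf e_E$ is that extension class. By Step 1 and the non-splitting hypothesis, this is nonzero.

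\emph{Step 3: passing to $S^{\circ}$ (main obstacle).} It remains to show that the nonzero principal part forces $\ob(\beta^{\circ})\neq 0$. Suppose $\ob(\beta^{\circ})=0$. Then $\widetilde{\ob}(\beta)$ dies in $H^1(S,N_{S/X}(lE))$ for some $l\ge 3$. I would show inductively that
\[
H^1(S,N_{S/X}(3E))\longrightarrow H^1(S,N_{S/X}(lE))
\]
has kernel contained in the image of $H^0(N_{S/X}(lE)\big\vert_{(l-3)E})$, via the sequences $0\to N_{S/X}((j-1)E)\to N_{S/X}(jE)\to N_{S/X}(jE)\big\vert_E\to 0$. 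Because $E^2<0$, the sheaves $N_{S/X}(jE)\big\vert_E\simeq\mathcal O_E((j-1)E)$ have negative degree for $j\ge 2$ and no sections, so these connecting maps cannot kill classes. This would make the map injective, giving a contradiction.

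The care needed is that $\widetilde{\ob}(\beta)$ is only well defined modulo coboundaries coming from different choices of liftings, which means modulo images of $H^0(N_{S/X}(jE))$ for small $j$. One must check that this ambiguity does not affect the restriction to $E$. Concretely, changing lifts alters $\widetilde{\ob}$ by terms of pole order at most two, which vanish in $H^1(\mathcal O_E(2E))$ after restriction. I expect this bookkeeping, together with the precise pole-order claim in Step 2, to be the delicate part.
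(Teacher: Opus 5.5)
The paper does not prove this statement; it quotes it from \cite{Nasu4}. Its proof of the $\mathbb P^5$ analogue, Theorem~\ref{thm:main2}, follows your three steps:
\begin{itemize}
\item nonvanishing of $\beta|_E$ (Lemma~\ref{lem:equivalence});
\item an explicit {\v C}ech computation showing that the coboundary of the pole-one section acquires a double pole, and that its principal part on $E$ is the connecting map of the normal-bundle sequence (Lemmas~\ref{lem:coboundary image} and~\ref{lem:linear independence of coh classes});
\item injectivity of $H^1(S,L)\to H^1(S^{\circ},L|_{S^{\circ}})$ when $E^2<0$ and $\deg L|_E\le 0$, which is \cite[Lemma~2.5]{Mukai-Nasu} and is what your Step~3 reproves.
\end{itemize}
Your Steps~1 and~3 are fine. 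The gap is Step~2. The identity $\widetilde{\ob}(\beta)|_E=\pm(\beta|_E)^2\mathbf e_E$ is the whole content of the theorem, and you only say that a local computation ``should give'' it. As phrased it is also not yet well posed. Since $E$ has codimension two in $X$, there are no local equations of $S$ ``with a pole along $E$'' on $X$. The lifts must be taken on an open cover of $X^{\circ}$, and you then have to check that the resulting cocycle extends across $E$ as a cocycle of $N_{S/X}(3E)$ on a cover of $S$.

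Here is one way to do it.
\begin{itemize}
\item Cover a neighbourhood of $E$ by affine opens $V_\lambda\subset X$ with $S\cap V_\lambda=\{z_\lambda=0\}$ and $E\cap V_\lambda=\{y_\lambda=z_\lambda=0\}$. Write $\beta(\bar z_\lambda)=g_\lambda/y_\lambda$ on $S$.
\item The sets $V_\lambda\cap D(y_\lambda)$, the sets $V_\lambda\cap D(z_\lambda)$ (which miss $S$), and affine opens disjoint from $E$ together cover $X^{\circ}$.
\item On $V_\lambda\cap D(y_\lambda)$ the function $y_\lambda$ is a unit modulo $\mathcal I_S^2$, so $\tilde g_\lambda/y_\lambda$ is an honest local lift. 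The identity $V_\lambda\cap D(y_\lambda)\cap S=(V_\lambda\cap S)\setminus E$ plays the role of $D(x_j)\cap S=D(x_j')\setminus E$ in \S\ref{subsec:unprojection}.
\item On overlaps write $z_\mu=uz_\lambda$ and $y_\mu=vy_\lambda+wz_\lambda$. Then
\[
\frac{1}{y_\mu}\equiv\frac{1}{v\,y_\lambda}-\frac{w\,z_\lambda}{v^{2}y_\lambda^{2}}\pmod{z_\lambda^{2}}.
\]
So the two lifts differ by $\bar z_\lambda$ times a function with at most a double pole along $E$, whose principal part is, up to units, $g_\lambda w/y_\lambda^{2}$. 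This is your ``differentiating the pole''.
\item Since $\bar y_\mu=v\bar y_\lambda+w\bar z_\lambda$ in $N_{E/X}^{\vee}$, the cocycle $(w|_E)$ represents the extension class of the dual of \eqref{ses:normal bundle of line}.
\item Applying $\beta$ once more gives a cocycle of $N_{S/X}(3E)$ for the cover $\{V_\lambda\cap S\}$. The terms of pole order at most two vanish in $N_{S/X}(3E)|_E$, and what survives is $\pm(\beta|_E)^2\mathbf e_E$.
\end{itemize}

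The bookkeeping worry in your last paragraph is unnecessary. By Step~3 the map $H^1(S,N_{S/X}(3E))\to H^1(S^{\circ},N_{S^{\circ}/X^{\circ}})$ is injective. Hence any representative of $\ob(\beta^{\circ})$ that is a cocycle of $N_{S/X}(3E)$ defines the same class there. There is no indeterminacy ``modulo images of $H^0(N_{S/X}(jE))$''.
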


Let $S$ be a complete intersection $S_{2,2,2} \subset \mathbb P^5$
containing a line $E \subset \mathbb P^5$.
We denote $\mathbb P^5$ by $\mathbb P$ in this section, 
and put $S^{\circ}:=S\setminus E$
and $\mathbb P^{\circ}:=\mathbb P\setminus E$.

\begin{lem}
  The inclusion $\iota: S^{\circ} \hookrightarrow S$ induces two injective maps
  $$
  H^1(S,\mathcal O_S(mE)) \hookrightarrow 
  H^1(S^{\circ},\mathcal O_{S^{\circ}}) \quad
  \mbox{and} \quad
  H^1(S,N_{S/\mathbb P}(mE)) \hookrightarrow 
  H^1(S^{\circ},N_{S^{\circ}/\mathbb P^{\circ}})
  $$
  of cohomology groups for any integer $m>0$.
\end{lem}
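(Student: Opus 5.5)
The plan is to identify $H^1(S^{\circ},\mathcal F|_{S^{\circ}})$ with a direct limit and to show that each transition map is injective. Here $\mathcal F$ is $\mathcal O_S$ or $N_{S/\mathbb P}\simeq\mathcal O_S(2)^{\oplus 3}$. Since $\iota:S^{\circ}\hookrightarrow S$ is an affine morphism ($E$ is a Cartier divisor), we have $H^1(S^{\circ},\mathcal F|_{S^{\circ}})=H^1(S,\iota_*\iota^*\mathcal F)$. Moreover, $\iota_*\iota^*\mathcal F=\varinjlim_r \mathcal F(rE)$. Cohomology on the noetherian scheme $S$ commutes with direct limits, so
$$H^1(S^{\circ},\mathcal F|_{S^{\circ}})\simeq\varinjlim_{r} H^1(S,\mathcal F(rE)).$$
It therefore suffices to show that $H^1(\mathcal F(rE))\to H^1(\mathcal F((r+1)E))$ is injective for every $r\ge m\ge1$. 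This map sits in the long exact sequence of
$$0\to\mathcal F(rE)\to\mathcal F((r+1)E)\to\mathcal F((r+1)E)|_E\to0,$$
so injectivity follows once $H^0(E,\mathcal F((r+1)E)|_E)=0$.

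We compute degrees on $E\simeq\mathbb P^1$. Here $n=3$, so $E^2=-2$. For $\mathcal F=\mathcal O_S$ the restriction is $\mathcal O_E(-2(r+1))$. Its degree is $-2r-2<0$, so $H^0=0$ for all $r\ge0$.

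For $\mathcal F=N_{S/\mathbb P}$ the restriction is $\mathcal O_E(2-2(r+1))^{\oplus3}=\mathcal O_E(-2r)^{\oplus3}$. This has no sections precisely when $r\ge1$. Since we start at $r=m>0$, every transition map from $H^1(N_{S/\mathbb P}(mE))$ onward is injective. Hence the composite into the limit is injective.

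This explains the hypothesis $m>0$ and is the only delicate point. For $m=0$ the first step $H^1(N_{S/\mathbb P})\to H^1(N_{S/\mathbb P}(E))$ could fail to be injective, because $H^0(\mathcal O_E^{\oplus3})\neq0$. For $\mathcal O_S$, by contrast, the argument works even for $m=0$.

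The only step requiring care is the justification of the direct-limit identification. This uses that $\iota$ is affine, so that higher direct images vanish, together with the commutation of $H^1$ with filtered colimits on a noetherian space. One could alternatively argue with Čech cochains on an affine cover of $S$, noting that sections on $U\cap S^{\circ}$ are sections of $\mathcal F(rE)$ for $r\gg0$. I expect this part to be routine; the degree computations above carry the real content.
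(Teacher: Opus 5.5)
Your proposal is correct and is essentially the paper's argument. The paper writes $\iota_*\mathcal O_{S^{\circ}}=\varinjlim_m\mathcal O_S(mE)$ and invokes \cite[Lemma~2.5]{Mukai-Nasu}, which gives injectivity of $H^1(S,L)\to H^1(S^{\circ},L|_{S^{\circ}})$ when $E^2<0$ and $\deg L|_E\le 0$; that lemma amounts to your step-by-step vanishing of $H^0(E,\mathcal F((r+1)E)|_E)$, applied with $E^2=-2$ and $N_{S/\mathbb P}\simeq\mathcal O_S(2)^{\oplus 3}$.

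One minor correction to your side remark: for $N_{S/\mathbb P}$ the case $m=0$ cannot actually fail to be injective, since $H^1(N_{S/\mathbb P})\simeq H^1(\mathcal O_S(2))^{\oplus 3}=0$. Only your argument needs $m\ge 1$, not the conclusion.
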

\Proof
We recall that
$\iota_* \mathcal O_{S^{\circ}}$ is the inductive limit
$\lim_{m \to \infty} \mathcal O_S(mE)$. 
Given an invertible sheaf $L$ on a projective surface $S$, there exists a natural inclusion
$L \hookrightarrow L \otimes \iota_* \mathcal O_{S^{\circ}}$ of sheaves on $S$.
It follows from \cite[Lemma~2.5]{Mukai-Nasu} that
if $E$ is a curve on $S$ satisfying $E^2<0$ and $\deg(L\big{\vert}_E)\le 0$,
then the map $H^1(S,L) \rightarrow H^1(S^{\circ}, L\big{\vert}_{S^{\circ}})$
induced by the above inclusion is injective.
Since $E$ is a rational curve on a $K3$ surface $S$, we see that $E^2=-2$,
and moreover, since
$N_{S/\mathbb P}\simeq \mathcal O_S(2)^{\oplus 3}$ by $S=S_{2,2,2}$,
we have the two natural filtrations
$$
H^1(S,\mathcal O_S(E)) \hookrightarrow 
H^1(S,\mathcal O_S(2E)) \hookrightarrow \dots 
\hookrightarrow H^1(S^{\circ},\mathcal O_{S^{\circ}})
$$
and
$$
H^1(S,N_{S/\mathbb P}(E)) \hookrightarrow 
H^1(S,N_{S/\mathbb P}(2E)) \hookrightarrow \dots 
\hookrightarrow H^1(S^{\circ},N_{S^{\circ}/\mathbb P^{\circ}})
$$
of cohomology groups on $S$ (or $S^{\circ}$). \qed

\medskip

Let $\mathbf e$ be the extension class of 
the exact sequence \eqref{ses:conormal}
for $S^{\circ} \subset \mathbb P^{\circ}$.
By Lemma~\ref{lem:primary obstruction},
in order to prove $\ob(\beta^{\circ})\ne 0$
for a given $\beta^{\circ} \in H^0(N_{S^{\circ}/\mathbb P^{\circ}})$,
it suffices to prove that the composition 
$$
\begin{CD}
  H^0(N_{S^{\circ}/\mathbb P^{\circ}}^{\vee}(2))
  @>{\cup \beta^{\circ}}>>
  H^0(\mathcal O_{S^{\circ}}(2))
  @>{\cup \mathbf e}>>
  H^1(N_{S^{\circ}/\mathbb P^{\circ}}^{\vee}(2))
  @>{\cup \mathbf \beta^{\circ}}>>
  H^1(\mathcal O_{S^{\circ}}(2)).
\end{CD}
$$
of cup product maps, which we denote by $\Phi(\beta^{\circ})$, is nonzero.
Let $q_i$ ($i=1,2,3$)
be the three quadratic polynomials defining $S$ in $\mathbb P$
defined in \S\ref{subsec:unprojection} (more precisely, see \eqref{eqn:S}).
Then for each $i$, the image $\bar q_i$ of $q_i$ by the natural 
surjection $\mathcal I_S \twoheadrightarrow \mathcal I_S/\mathcal I_S^2 
\simeq N_{S/\mathbb P}^{\vee}$
defines an element of $H^0(N_{S/\mathbb P}^{\vee}(2))$.
Then $\left\{\bar q_1,\bar q_2,\bar q_3\right\}$ 
is a free basis of
$N_{S/\mathbb P}^{\vee}(2) \simeq \mathcal O_S^{\oplus 3}$.
By $N_{S/\mathbb P}(E)\simeq \sHom(N_{S/\mathbb P}^{\vee},\mathcal O_S(E))$,
every global section $\beta$ of $N_{S/\mathbb P}(E)$ corresponds to 
an $\mathcal O_S$-linear homomorphism from $N_{S/\mathbb P}^{\vee}(2)$
to $\mathcal O_S(2)(E)$, which we denote by the same symbol $\beta$.
Since $N_{S/\mathbb P}^{\vee}$ is generated by $\bar q_i$,
$\beta$ is determined by
the three global sections $v_i:=\beta(\bar q_i)$ ($i=1,2,3$) of $\mathcal O_S(2)(E)$.
Therefore, we can identify $\beta \in H^0(N_{S/\mathbb P}(E))$
($\subset H^0(N_{S^{\circ}/\mathbb P^{\circ}})$)
with an element $(v_1,v_2,v_3)$ in 
$H^0(\mathcal O_S(2)(E))^{\oplus 3}$
($\subset H^0(\mathcal O_{S^{\circ}}(2))^{\oplus 3}$).
By abuse of notation, we write 
$$
\beta=(v_1,v_2,v_3).
$$

\begin{lem}
  \label{lem:equivalence}
  Let $\beta$ be a global section of $N_{S/\mathbb P}(E)$.
  Then the following are equivalent:
  \begin{enumerate}
    \item $\beta \not\in H^0(S,N_{S/\mathbb P})$,
    \item $\beta\big{\vert}_E \ne 0$ in 
    $H^0(E,N_{S/\mathbb P}(E)\big{\vert}_E)$,
    \item $v_i \not \in H^0(S,\mathcal O_S(2))$ for some $i$, and
    \item $v_i\big{\vert}_E \ne 0$ in $H^0(E,\mathcal O_S(2)(E)\big{\vert}_E)$
    for some $i$.
  \end{enumerate}
\end{lem}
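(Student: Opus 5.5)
The plan is to prove the cycle of implications (1)$\Leftrightarrow$(2), (3)$\Leftrightarrow$(4) and (1)$\Leftrightarrow$(3), using one restriction exact sequence on $S$ for each pair.

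\emph{(1)$\Leftrightarrow$(2).} Tensor the sequence $0\to\mathcal O_S\to\mathcal O_S(E)\to\mathcal O_E(E)\to 0$ with the locally free sheaf $N_{S/\mathbb P}$. This gives an exact sequence
$$0\to H^0(N_{S/\mathbb P})\to H^0(N_{S/\mathbb P}(E))\xrightarrow{|_E} H^0(E,N_{S/\mathbb P}(E)\big\vert_E).$$
By exactness at the middle term, $\beta$ lies in $H^0(N_{S/\mathbb P})$ if and only if $\beta\big\vert_E=0$. This is exactly (1)$\Leftrightarrow$(2).

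\emph{(3)$\Leftrightarrow$(4).} The same argument applied to $\mathcal O_S(2)$ shows that, for each $i$, the section $v_i$ lies in $H^0(\mathcal O_S(2))$ if and only if $v_i\big\vert_E=0$. Negating this for some $i$ gives (3)$\Leftrightarrow$(4).

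\emph{(1)$\Leftrightarrow$(3).} This is the only point needing care: we must check that the identification $\beta=(v_1,v_2,v_3)$ is compatible with the two subspaces. We have
$$N_{S/\mathbb P}(E)\simeq \sHom(N_{S/\mathbb P}^{\vee}(2),\mathcal O_S(2)(E)),$$
and the $\bar q_i$ form a free basis of $N_{S/\mathbb P}^{\vee}(2)\simeq\mathcal O_S^{\oplus 3}$. Hence the map $\beta\mapsto(\beta(\bar q_i))_i$ is an isomorphism $N_{S/\mathbb P}(E)\simeq\mathcal O_S(2)(E)^{\oplus3}$. Under this isomorphism the subsheaf $N_{S/\mathbb P}$ corresponds exactly to $\mathcal O_S(2)^{\oplus3}$, because the isomorphism is induced by the same basis and commutes with the inclusion $\mathcal O_S\subset\mathcal O_S(E)$. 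Taking global sections, $\beta\in H^0(N_{S/\mathbb P})$ if and only if every $v_i\in H^0(\mathcal O_S(2))$. Negating both sides gives (1)$\Leftrightarrow$(3).

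Each step is routine and I do not expect a genuine obstacle. The only thing to verify is the compatibility of the trivialization with the inclusion $\mathcal O_S\subset\mathcal O_S(E)$, which holds because the isomorphism is $\mathcal O_S$-linear and obtained by evaluating on a fixed free basis.
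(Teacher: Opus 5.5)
Your proposal is correct and follows essentially the same route as the paper, which derives all four equivalences from the single restriction sequence $0\to N_{S/\mathbb P}\to N_{S/\mathbb P}(E)\to N_{S/\mathbb P}(E)\big\vert_E\to 0$ together with the trivialization $N_{S/\mathbb P}(E)\simeq\mathcal O_S(2)(E)^{\oplus 3}$ given by the basis $\bar q_1,\bar q_2,\bar q_3$. You simply spell out the three pairwise equivalences and the compatibility of that trivialization with the inclusion $\mathcal O_S\subset\mathcal O_S(E)$, which the paper leaves implicit.
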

\Proof The lemma immediately follows from the exact sequence
$$
0 \longrightarrow \underbrace{N_{S/\mathbb P}}_{\simeq \mathcal O_S(2)^{\oplus 3}}
\longrightarrow \underbrace{N_{S/\mathbb P}(E)}_{\simeq \mathcal O_S(2)(E)^{\oplus 3}}
\overset{|_E}{\longrightarrow} \underbrace{N_{S/\mathbb P}(E)\big{\vert}_E}_{\simeq \mathcal O_E^{\oplus 3}}
\longrightarrow 0.\qed
$$

We consider an exact sequence
$$
0 \rightarrow N_{S^{\circ}/\mathbb P^{\circ}}^{\vee}(2) 
\rightarrow \mathcal O_{\mathbb P^{\circ}}/\mathcal I_{S^{\circ}}^2(2)
\rightarrow \mathcal O_{S^{\circ}}(2) \rightarrow 0,
$$
which is obtained by 
$\eqref{ses:conormal} \otimes \mathcal O_{\mathbb P^{\circ}}(2)$ for
$S^{\circ} \subset \mathbb P^{\circ}$.
Let $\delta: H^0(\mathcal O_{S^{\circ}}(2))
\rightarrow H^1(N_{S^{\circ}/\mathbb P^{\circ}}^{\vee}(2))$
denote the first coboundary map of the 
long exact sequence associated to this sequence.
Let $\theta$ be a global section of $\mathcal O_S(2)(E)$
defined in \eqref{eqn:section with pole},
which defines the unprojection morphism of $E$ on $S$
(see \S~\ref{subsec:unprojection}).
The following lemma is a key to our proof of Theorem~\ref{thm:main2}.
\begin{lem}
  \label{lem:coboundary image}
  The coboundary $\delta(\theta)$ of $\theta$ is contained in a subgroup
  $$
  H^1(S,N_{S/\mathbb P}^{\vee}(2)(2E))
  \hookrightarrow
  H^1(S,N_{S^{\circ}/\mathbb P^{\circ}}^{\vee}(2)).
  $$
  In fact, under the basis $\left\{\bar q_1,\bar q_2,\bar q_3\right\}$
  of $N_{S/\mathbb P}^{\vee}(2)\simeq \mathcal O_S^{\oplus 3}$, we have
  \begin{equation}
    \label{eqn:coboundary of delta}
    \delta(\theta)=-\sum_{k=1}^3 t_k \bar q_k
  \end{equation}
  in $H^1(S,N_{S/\mathbb P}^{\vee}(2)(2E))$, 
  where $t_k$ ($k=1,2,3$) are the cohomology classes in 
  $H^1(\mathcal O_S(2E))$
  defined in \eqref{eqn:fundamental 1-cocycles}.
\end{lem}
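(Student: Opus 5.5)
We compute $\delta(\theta)$ by Čech cochains on the covering $\mathfrak U_1$ of $\mathbb P^{\circ}$. More precisely, we restrict to the opens $D(x_j)$, $0\le j\le n$, with $n=3$; these already cover $\mathbb P^{\circ}=\mathbb P\setminus E$.

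\textbf{Step 1: local lifts.} The coboundary $\delta$ is computed by choosing local lifts of $\theta$ to $\mathcal O_{\mathbb P^{\circ}}/\mathcal I_{S^{\circ}}^2(2)$ on each $D(x_j)$ and taking differences. By \eqref{eqn:section with pole}, $\theta=\Delta_j/x_j$ on $D(x_j)\cap S$, and $\Delta_j/x_j$ is a well-defined section of $\mathcal O_{\mathbb P}(2)$ on $D(x_j)\subset\mathbb P$. We therefore take $\tilde\theta_j:=\Delta_j/x_j$ as the local lift over $D(x_j)$.

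\textbf{Step 2: the differences.} On $D(x_ix_j)$ with $i<j$, Lemma~\ref{lem:equality of ratio} gives
$$
\tilde\theta_j-\tilde\theta_i=\frac{x_i\Delta_j-x_j\Delta_i}{x_ix_j}=\frac{(-1)^{i+j}\det(\mathbf q|A_{ij})}{x_ix_j}.
$$
Expand $\det(\mathbf q|A_{ij})$ along its first column. This gives $\sum_k (-1)^{k+1}q_k\det A_{k;i,j}$, and with the sign convention \eqref{eqn:(n-1)-minors} the result should be $\pm\sum_k q_k\Delta_{k;i,j}$. Reducing modulo $\mathcal I_S^2$, the difference becomes $\sum_k \bar q_k\,(\Delta_{k;i,j}/x_ix_j)$. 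By definition \eqref{eqn:fundamental 1-cocycles}, this is $\sum_k(\hat t_k)_{ij}\bar q_k$. So $\delta(\theta)$ is represented by $\pm\sum_k \hat t_k\bar q_k$, and the sign must be checked to be $-$.

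\textbf{Step 3: the subgroup.} The cochains $\hat t_k$ are defined on $D(x_i'x_j')\supset D(x_ix_j)\cap S$ and are cocycles of $\mathcal O_S(2E)$; here $K_S=0$, so $\mathcal O_S(K_S+2E)=\mathcal O_S(2E)$. Hence the class $-\sum t_k\bar q_k$ lives in $H^1(\mathfrak U_2,N^\vee_{S/\mathbb P}(2)(2E))$. Its image in $H^1(S^{\circ},N^\vee_{S^{\circ}/\mathbb P^{\circ}}(2))$ is the Čech class computed above, because $\mathfrak U_2$ restricted to $S^{\circ}$ is exactly $\{D(x_j)\cap S\}$ by \eqref{eqn:x_i and x_i'}.

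The injectivity of $H^1(S,N^\vee_{S/\mathbb P}(2)(2E))\cong H^1(\mathcal O_S(2E))^{\oplus3}\to H^1(S^{\circ},\cdot)$ follows from the preceding lemma, since $N^\vee_{S/\mathbb P}(2)\cong\mathcal O_S^{\oplus3}$. That lemma handles $\mathcal O_S(mE)$ with $m>0$, which settles the containment claim. Čech computations on this open cover are legitimate because $D(x_j)\cap S$ is affine; it is a principal open in the affine $D(x_j)\subset\mathbb P$.

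\textbf{Main obstacle.} I expect the main difficulty to be the sign bookkeeping in Step 2. One must reconcile the factor $(-1)^{i+j}$ from Lemma~\ref{lem:equality of ratio}, the cofactor signs in the expansion of $\det(\mathbf q|A_{ij})$ along its first column, and the convention \eqref{eqn:(n-1)-minors}. The orientation convention for Čech differences ($s_j-s_i$ on $D(x_ix_j)$, as in the proof of Lemma~\ref{lem:linear independence of coh classes}) also enters. I would check the total sign in the smallest case by direct expansion, and fix the Čech orientation consistently with that proof, to confirm the overall minus sign in \eqref{eqn:coboundary of delta}.
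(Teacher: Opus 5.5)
Your proposal is correct and is essentially the paper's proof: the same cover $\{D(x_j)\}$ of $\mathbb P^{\circ}$, the same local lifts $\Delta_j/x_j$, Lemma~\ref{lem:equality of ratio}, and the identification of the differences with $\sum_k \hat t_k\bar q_k$. The sign you left open follows in one line from your first-column expansion: for $i<j$, $(-1)^{i+j}\det(\mathbf q|A_{ij})=\sum_k(-1)^{i+j+k+1}q_k\det A_{k;i,j}=-\sum_k\Delta_{k;i,j}q_k$ by \eqref{eqn:(n-1)-minors}, so with the orientation $s_j-s_i$ on $D(x_ix_j)$ you get exactly $\delta(\theta)=-\sum_k t_k\bar q_k$.
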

\Proof
In \S\ref{subsec:unprojection}, we have seen
that the open affine subsets $D(x_i)$ ($0 \le i\le 3$) of $\mathbb P^{\circ}$
cover $\mathbb P^{\circ}$ (and hence $S^{\circ}$).
We compute the coboundary $\delta(\theta)$ of $\theta$
by using the {\v C}ech cohomology with respect to
the covering $\mathfrak U=\left\{D(x_i)\bigm| 0 \le i\le 3\right\}$ 
of $\mathbb P^{\circ}$. Let $\hat \theta_i=\Delta_i/x_i$ in
$\Gamma(D(x_i),\mathcal O_{\mathbb P^{\circ}}/\mathcal I_{S^{\circ}}^2(2))$
be the local lift of $\theta$ over $D(x_i)$.
Then by Lemma~\ref{lem:equality of ratio}, we have 
\begin{align*}
x_ix_j(\hat \theta_j - \hat \theta_i) &=
x_i \Delta_j - x_j \Delta_i\\
&= (-1)^{i+j} \det (\mathbf q | A_{ij}) \\
&= -\sum_{k=1}^3 \Delta_{k;i,j} q_k,
\end{align*}
where $\Delta_{k;i,j}$ are the $2$-minors of $A$ defined in 
\eqref{eqn:(n-1)-minors}. 
By dividing with $x_ix_j$ and restricting it to $S^{\circ}$,
we see that for every $i,j$, 
$$
\hat \theta_j -\hat \theta_i 
=-\sum_{k=1}^3 (\Delta_{k;i,j}/x_ix_j) \bar q_k
$$
in $\Gamma(D(x_i)\cap D(x_j),N_{S^{\circ}/\mathbb P^{\circ}}^{\vee}(2))$.
Therefore, the $1$-cocycle $(\hat \theta_j -\hat \theta_i)_{ij}$
representing $\delta(\theta)$ 
in $H^1(N_{S^{\circ}/\mathbb P^{\circ}}^{\vee}(2))$
is equal to the right hand side of \eqref{eqn:coboundary of delta}.
Since $t_k$ and $\bar q_k$ are respectively contained in 
$H^1(\mathcal O(2E))$ and $H^0(N_{S/\mathbb P}^{\vee}(2))$ for any $k$,
$\delta(\theta)$ is contained in 
the subgroup $H^1(N_{S/\mathbb P}^{\vee}(2)(2E))$. \qed

\medskip

\paragraph{\bf Proof of Theorem~\ref{thm:main2}}

Let $\beta$ be a global section of 
$N_{S/\mathbb P}(E)\simeq \sHom(N_{S/\mathbb P}^{\vee},\mathcal O_S(E))$
not contained in $H^0(N_{S/\mathbb P})\simeq \Hom(N_{S/\mathbb P}^{\vee},\mathcal O_S)$.
Then by Lemma~\ref{lem:equivalence},
there exists a quadratic form $q$ in $H^0(\mathcal I_S(2))$
such that $v:=\beta(\bar q)$
is not contained in $H^0(\mathcal O_S(2))$ but in $H^0(\mathcal O_S(2)(E))$.
Since $\mathcal O_S(2)(E)\big{\vert}_E\simeq \mathcal O_E$,
there exists a nonzero constant $c \in k^{\times}$
such that $v\big{\vert}_E=c \theta\big{\vert}_E$,
in other words, we have $v=c\theta$ up to an element of $H^0(\mathcal O_S(2))$.
Since the coboundary map $\delta$ is $k$-linear, we have
$$
v\cup \mathbf e=\delta (v) =c \delta(\theta)
$$
in $H^1(S,N_{S/\mathbb P}^{\vee}(2)(2E))$.
Therefore, it follows from Lemma~\ref{lem:coboundary image} that
$$
\Phi(\beta)(\bar q)=\ob(\beta)(\bar q)=(\beta \cup \mathbf e \cup \beta)(\bar q)
=c \delta(\theta) \cup \beta=-c\sum_{k=1}^3 t_k \cup v_k
$$
in $H^1(S,\mathcal O_S(2)(3E))$, where
the cup product of $t_k \cup v_k$ is taken by the cup product map
$$
H^1(S,\mathcal O_S(2E)) \times 
H^0(S,\mathcal O_S(2)(E))
\overset{\cup}{\longrightarrow}
H^1(S,\mathcal O_S(2)(3E)).
$$
The last cup product map is compatible with 
the cup product map
$$
H^1(E,\mathcal O_E(2E)) \times 
H^0(E,\mathcal O_E(2)(E))
\overset{\cup}{\longrightarrow}
H^1(E,\mathcal O_E(2)(3E)).
$$
on $E$ via the restrictions to $E$.
We note that $\mathcal O_E(2)(E)^{\oplus 3}$ is trivial sheaf on 
$E \simeq \mathbb P^1$ and 
$(v_1\big{\vert}_E,v_2\big{\vert}_E,v_3\big{\vert}_E)\ne (0,0,0)$
by Lemma~\ref{lem:equivalence} again.
Thus
$$
\ob(\beta)(\bar q)\big{\vert}_E=
-c\sum_{k=1}^3 t_k\big{\vert}_E \cup v_k\big{\vert}_E\ne 0 
$$
by Lemma~\ref{lem:linear independence of coh classes}.
This implies that $\ob(\beta)\ne 0$ and the proof has been completed.
\qed

\section{Obstructions to deforming curves on $S_{2,\dots,2}$}
\label{sec:obstruction}

The purpose of this section is to prove Theorem~\ref{thm:main1}.

Let $n \ge 3$ be an integer, let $\mathbb P:=\mathbb P^{n+2}$
and let $E$ be a line in $\mathbb P$.
As is well known, lines in $\mathbb P$ are parametrised by 
the Grassmannian $\Gr(2,n+3)$
($\simeq \Gr(n+1,H^0(\mathbb P,\mathcal O_{\mathbb P}(1)))$) 
of dimension $2n+2$.
Let $S:=S_{2,\dots,2}$ be a smooth complete intersection of 
$n$ quadrics in $\mathbb P$.
By the Riemann--Roch theorem for surfaces, the Hilbert polynomial of 
$S$ is computed as 
$$
\begin{aligned}
P_S(t)
&=\chi(\mathcal O_S(t\mathbf h))\\
&=\frac{1}{2}(t\mathbf h)\cdot(t\mathbf h-K_S)
  +\chi(\mathcal O_S)\\
&=2^{n-1}t(t-n+3)
  +2^{n-3}(n^2-5n+8),
\end{aligned}
$$
where we have used $\mathbf h^2=2^n$, $K_S=(n-3)\mathbf h$, and
\begin{equation}
  \label{eqn:euler char}
  \chi(\mathcal O_S)=2^{n-3}(n^2-5n+8).
\end{equation}
Indeed, $c(T_S)=(1+\mathbf h)^{n+3}/(1+2\mathbf h)^n$
and Noether's formula give the above formula for the
Euler characteristic $\chi(\mathcal O_S)$ of $S$. 
Let $P(t):=2^{n-1}t(t-n+3)+2^{n-3}(n^2-5n+8)$.

\begin{lem}
  \label{lem:smoothness of flag2}
  The Hilbert-flag scheme 
  $\HF^{ci}_{t+1, P(t)} \mathbb P$
  parametrizing flags 
  $$
  E \subset S \subset \mathbb P
  $$ of a line $E$ and a smooth complete intersection
  $S=S_{2,\dots,2}$ in $\mathbb P$
  is isomorphic to an open subset $\mathcal U$ of 
  $\Gr(n,(n+1)(n+6)/2)$-bundle over $\Gr(2,n+3)$.
  In particular, $\HF^{ci}_{t+1, P(t)} \mathbb P$ is irreducible and
  smooth along $\mathcal U$, and
  $$
  \dim \HF^{ci}_{t+1, P(t)} \mathbb P
  =\dfrac{n(n+2)(n+3)}2+2n+2.
  $$
\end{lem}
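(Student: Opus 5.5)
The plan is to describe the flag scheme through the projection $pr_1\colon (E,S)\mapsto [E]$ onto the space of lines $\Gr(2,n+3)$ and to identify its fibres. Fix a line $E\subset\mathbb P$. A complete intersection $S=S_{2,\dots,2}$ containing $E$ is cut out by an $n$-dimensional subspace $V\subset H^0(\mathbb P,\mathcal I_{E/\mathbb P}(2))$. Conversely, $S$ determines $V=H^0(\mathcal I_{S/\mathbb P}(2))$: the Koszul resolution in the proof of Lemma~\ref{lem:vanishing of H^1_*O_S} shows that the degree-two part of the saturated ideal of $S$ is spanned by $q_1,\dots,q_n$. In coordinates with $E=\{x_0=\dots=x_n=0\}$, the space $H^0(\mathcal I_E(2))$ consists of the quadrics $\sum_{j\le n} a_j x_j$. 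Its dimension is $\binom{n+4}{2}-3=(n+1)(n+6)/2$, since $\mathcal I_E$ is generated by linear forms and $h^0(\mathcal O_E(2))=3$. These spaces form a vector bundle $\mathcal V$ over $\Gr(2,n+3)$, namely the kernel of the surjection $H^0(\mathcal O_{\mathbb P}(2))\otimes\mathcal O\to \mathrm{Sym}^2\mathcal Q$ onto the rank-three bundle of quadratic forms on the lines. Let $\Gr(n,\mathcal V)$ be the relative Grassmannian and $\mathcal U\subset\Gr(n,\mathcal V)$ the locus where the $n$ quadrics cut out a smooth surface. Smoothness of the zero scheme is an open condition in a flat family, and by the exact sequence above the zero scheme is a complete intersection of Hilbert polynomial $P(t)$ there, so $\mathcal U$ is open.

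The first step is to write down the morphism $\mathcal U\to \HF_{t+1,P(t)}\mathbb P$. The universal pair $(E,S)$ over $\mathcal U$ is flat: $E$ is pulled back from $\Gr(2,n+3)$, and $S$ is the zero locus of a regular sequence of constant degrees, so its Hilbert polynomial is constant. The second step is the inverse on the open subscheme $\HF^{ci}$ of flags with $S$ a smooth complete intersection of quadrics. Given a flat family $(\mathcal E,\mathcal S)$ over a base $T$, cohomology and base change yield the vector bundle $\pi_*\mathcal I_{\mathcal S}(2)$ of rank $n$. The vanishing $H^1(\mathcal I_S(2))=0$ and the constancy of $h^0$ come from the Koszul resolution. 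Containment $\mathcal E\subset\mathcal S$ gives a sub-bundle of $\pi_*\mathcal I_{\mathcal E}(2)$, which is the pullback of $\mathcal V$; this defines a $T$-point of $\mathcal U$. The two constructions are mutually inverse, which gives the isomorphism $\HF^{ci}_{t+1,P(t)}\mathbb P\simeq\mathcal U$.

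It remains to count dimensions and check nonemptiness. We have $\dim\Gr(2,n+3)=2n+2$ and $\dim\Gr(n,(n+1)(n+6)/2)=n\bigl((n+1)(n+6)/2-n\bigr)=n(n^2+5n+6)/2=n(n+2)(n+3)/2$, which gives the stated total. Irreducibility follows because $\mathcal U$ is a nonempty open subset of an irreducible Grassmann bundle; a smooth $S$ containing a line exists, as is used throughout the paper. Smoothness follows because a Grassmann bundle over a smooth base is smooth.

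I expect the main obstacle to be the scheme-theoretic step, that is, showing the identification is an isomorphism of schemes and not only a bijection on points. This rests on the base-change argument for $\pi_*\mathcal I_{\mathcal S}(2)$ and on the fact that a flat family of subschemes with this Hilbert polynomial, near a complete intersection, is again a complete intersection of $n$ quadrics. I would handle the latter by lifting the Koszul resolution over Artinian bases, or equivalently by invoking the standard fact that complete intersections are unobstructed and form an open subset of the Hilbert scheme, which is itself a Grassmannian of $n$-dimensional subspaces of quadrics.
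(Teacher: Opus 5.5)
Your proposal is correct and follows the same route as the paper: project the flag scheme to $\Gr(2,n+3)$ via $(E,S)\mapsto[E]$, identify each fibre with an open subset of $\Gr(n,H^0(\mathcal I_E(2)))$ with $h^0(\mathcal I_E(2))=(n+1)(n+6)/2$, and read off the dimension from the bundle structure. Your relative Grassmannian $\Gr(n,\mathcal V)$ and the base-change argument for $\pi_*\mathcal I_{\mathcal S}(2)$ make rigorous the fibre-bundle structure that the paper only asserts. One small slip: the universal zero scheme is not flat over all of $\Gr(n,\mathcal V)$, so the openness of $\mathcal U$ should instead come from the openness of the smooth locus of the projection together with properness, which is routine.
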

\Proof Given a line $E \subset \mathbb P$,
the linear system $|\mathcal I_E(2)| \subset |\mathcal O_{\mathbb P}(2)|$
of quadrics containing $E$ is isomorphic to 
the projective space $\mathbb P^N$, 
where $N+1=h^0(\mathcal O_{\mathbb P}(2))-h^0(\mathcal O_E(2))=(n+1)(n+6)/2$.
Since the fiber of the projection
$$
pr_1: \HF^{ci}_{t+1,P(t)} \mathbb P \rightarrow \Gr(2,n+3), \quad (E,S) \mapsto [E]
$$
at $[E]$ is isomorphic to an open subset of $\Gr(n,H^0(\mathcal I_E(2)))$,
we obtain the first conclusion. 
Finally we obtain the dimension of $\HF^{ci}_{t+1,P(t)} \mathbb P$
by its fiber-bundle structure.
\qed

\begin{rmk}
\label{rmk:smoothness of flag4}
In Lemma~\ref{lem:smoothness of flag2}, we use a global argument to show that
the Hilbert-flag scheme $\HF^{ci} \mathbb P$ is nonsingular at the point $(E,S)$
for every flag $E \subset S \subset \mathbb P$ of
a line $E$ and a smooth complete intersection $S=S_{2,\dots,2}$.
One can also use a direct computation to prove this fact.
Indeed, $H^2(\mathcal I_S(2))\simeq H^1(\mathcal O_S(2))=0$
by Lemma~\ref{lem:vanishing of H^1_*O_S}.
Since $E \subset \mathbb P$ is projectively normal,
it follows from the exact sequence
$0 \rightarrow \mathcal I_S(2) \rightarrow \mathcal I_E(2)
\rightarrow \mathcal O_S(2\mathbf h-E) \rightarrow 0$
that $H^1(N_{S/\mathbb P}(-E))
\simeq H^1(\mathcal O_S(2\mathbf h-E)^{\oplus n})=0$.
Since $N_{E/\mathbb P} \simeq \mathcal O_E(1)^{\oplus n+1}$, 
we have $H^1(N_{E/\mathbb P})=0$.
Thus it follows from the exact sequence
$
0 \rightarrow N_{S/\mathbb P}(-E) \rightarrow 
N_{(E,S)/\mathbb P} \overset{\pi_1}{\rightarrow} N_{E/\mathbb P} \rightarrow 0
$
that 
$H^1(N_{(E,S)/\mathbb P})=0$, and hence $\HF \mathbb P$ is nonsingular at
$(E,S)$.
The dimension formula for 
$\HF^{ci}_{t+1,P(t)} \mathbb P$ (or equivalently, $H^0(N_{(E,S)/\mathbb P})$)
also follows from the exact sequence
$
0 \rightarrow N_{E/S}
\rightarrow N_{(E,S)/\mathbb P} 
\overset{\pi_2}{\rightarrow} N_{S/\mathbb P} 
\rightarrow 0,
$
together with $N_{S/\mathbb P}\simeq \mathcal O_S(2)^{\oplus n}$ and 
$N_{E/S}\simeq \mathcal O_E(1-n)$ (cf.~\S\ref{subsec:flag and picard}).
Since $H^1(N_{E/S})\ne 0$ by $n\ge 3$,
there exists a first-order deformation $\tilde S$ of 
$S$ in $\mathbb P$, to which the closed subscheme $E$ of $S$ does not lift.
\end{rmk}

Let $C$ be a smooth connected curve on $S=S_{2,\dots,2}$ such that
\begin{equation}
  \label{eqn:class of C}
  C \sim mD, \quad D:=(n-1)\mathbf h +E, \quad m \in \mathbb Z_{>0}.
\end{equation}
The following lemma shows that the local deformation space of the 
pair $(C,S)$ in $\mathbb P^{n+2}$ is well behaved.

\begin{lem}
  \label{lem:smoothness of flag3}
  If $m$ is sufficiently large and coprime to
  the characteristic $p = \car k$ of the ground field,
  then $\HF \mathbb P$ is nonsingular at $(C,S)$
  of dimension
  $$
  \dfrac{n(n+2)(n+3)}2+2n+2+\dim |mD|.
  $$
\end{lem}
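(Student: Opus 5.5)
The plan is to deduce the statement from Lemma~\ref{lem:smoothness of flag2} by applying Lemma~\ref{lem:smoothness of flag1}. We take $X=\mathbb P$, $\mathcal P=\mathcal O_{\mathbb P}(m(n-1))$, $a=1$ and $b=m$, so that
$$
C\sim mD = m(n-1)\mathbf h + mE = a\,\mathcal P\big{\vert}_S + bE
$$
in $\Pic S$. The condition $b\not\equiv 0 \bmod p$ is exactly the assumption $p\nmid m$. It then remains to verify the cohomological hypotheses of Lemma~\ref{lem:smoothness of flag1}, namely $H^1(N_{S/\mathbb P})=0$, $H^1(\mathcal O_S(C))=0$ and $H^1(\mathcal O_S(E))=0$.

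First, $N_{S/\mathbb P}\simeq \mathcal O_S(2)^{\oplus n}$, so $H^1(N_{S/\mathbb P})=0$ by Lemma~\ref{lem:vanishing of H^1_*O_S}. Second, $H^1(\mathcal O_S(C))=H^1(\mathcal O_S(mD))=0$ for $m\gg 0$ by Lemma~\ref{lem:vanishing-D}(1).

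Third, for $H^1(\mathcal O_S(E))$ I would use the sequence
$$
0\to\mathcal O_S\to\mathcal O_S(E)\to\mathcal O_E(E)\simeq\mathcal O_{\mathbb P^1}(1-n)\to 0 .
$$
Here $H^1(\mathcal O_S)=0$ by Lemma~\ref{lem:vanishing of H^1_*O_S}, and $H^2(\mathcal O_S)\to H^2(\mathcal O_S(E))$ is injective, being Serre dual to the surjection $H^0(\omega_S(-E))\to H^0(\omega_S)$ obtained by restriction. Hence $h^1(\mathcal O_S(E))=h^1(\mathcal O_{\mathbb P^1}(1-n))=n-2$, which is nonzero for $n\ge 3$. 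This is the main obstacle: the hypothesis $H^1(\mathcal O_S(E))=0$ of Lemma~\ref{lem:smoothness of flag1} fails, and, consistently with Remark~\ref{rmk:smoothness of flag4}, $E$ does not deform with every first-order deformation of $S$.

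To get around this, I would rerun the argument of Lemma~\ref{lem:smoothness of flag1} with the Picard scheme in place of $\Hilb^{cd}\mathcal S$ on the $E$-side. Since $\HF\mathbb P$ is smooth at $(E,S)$ by Lemma~\ref{lem:smoothness of flag2}, its image under $\pi$ shows that the deformations of $S$ to which $\mathcal O_S(E)$ lifts form a smooth family. Concretely, I would prove directly that $\Pic\mathcal S$ is smooth at $([\mathcal O_S(E)],S)$, and then argue as follows.

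The relative Picard scheme is smooth over its image in $H_Q$ with fibre tangent space $H^1(\mathcal O_S)=0$. By the additivity of Atiyah classes, its tangent cone at $[\mathcal O_S(E)]$ is cut out by $\delta_{\mathcal O_S(E)}$. Moreover, every first-order deformation of $S$ killed by $\delta_{\mathcal O_S(E)}$ lifts together with $E$ to all orders. Indeed, since $E$ is a line, a deformation of $(E,S)$ is the same as a deformation of a line together with $n$ quadrics through it, and this space is smooth, so the locus in $H_Q$ over which $\mathcal O_S(E)$ extends is smooth of dimension $\dim\HF^{ci}_{t+1,P(t)}\mathbb P - h^0(\mathcal O_S(E))$, where $h^0(\mathcal O_S(E))=1$.

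The isomorphism $\mathcal O_{S'}(E')\mapsto \mathcal P\big{\vert}_{S'}^{\otimes a}\otimes\mathcal O_{S'}(E')^{\otimes b}$ of deformation functors, valid because $\delta_{\mathcal O_S(C)}=m\,\delta_{\mathcal O_S(E)}$ with $m\neq 0$ in $k$, then gives smoothness of $\Pic\mathcal S$ at $([\mathcal O_S(C)],S)$, of the same dimension. Finally, $H^1(\mathcal O_S(C))=0$ makes $\pi$ smooth at $(C,S)$, with fibre $|C|=|mD|$. Therefore
$$
\dim_{(C,S)}\HF\mathbb P=\dim\Pic\mathcal S+\dim|mD|=\Bigl(\tfrac{n(n+2)(n+3)}2+2n+2\Bigr)+\dim|mD|,
$$
since $\dim\Pic\mathcal S$ at $[\mathcal O_S(E)]$ equals $\dim\HF^{ci}_{t+1,P(t)}\mathbb P - \dim|E|$ and $\dim|E|=0$.
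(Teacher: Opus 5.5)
Your first two paragraphs are exactly the paper's proof: Lemma~\ref{lem:smoothness of flag1} with $b=m$, plus $H^1(N_{S/\mathbb P})=0$ and $H^1(\mathcal O_S(C))=0$. The third step, however, is wrong. In fact $H^1(\mathcal O_S(E))=0$, so the ``main obstacle'' does not exist.

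The map $H^0(\omega_S(-E))\to H^0(\omega_S)$ is multiplication by the equation of $E$. It is injective, not surjective. Its cokernel is the image of the restriction $H^0(\mathcal O_S(n-3))\to H^0(\mathcal O_E(n-3))$, which is all of $H^0(\mathcal O_E(n-3))\simeq k^{n-2}$; for $n=3$ the map is $0\to k$. Dually, $H^2(\mathcal O_S)\to H^2(\mathcal O_S(E))$ has an $(n-2)$-dimensional kernel. Hence the connecting map $H^1(\mathcal O_E(E))\to H^2(\mathcal O_S)$ is injective, and since $H^1(\mathcal O_S)=0$ we get $H^1(\mathcal O_S(E))=0$. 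For $n=3$ this is the familiar fact for a $(-2)$-curve on a $K3$ surface: $\chi(\mathcal O_S(E))=1=h^0(\mathcal O_S(E))$ and $h^2(\mathcal O_S(E))=h^0(\mathcal O_S(-E))=0$.

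The paper obtains the same vanishing from $H^1(\mathcal O_S(E))\simeq H^1(\mathcal I_{E/S}(n-3))^{\vee}$. This group is zero by projective normality of $E$ together with $H^2(\mathcal I_S(n-3))\simeq H^1(\mathcal O_S(n-3))=0$. You have conflated $H^1(\mathcal O_S(E))$ with $H^1(N_{E/S})=H^1(\mathcal O_E(1-n))\simeq k^{n-2}$. Only the latter is nonzero, and it is the group Remark~\ref{rmk:smoothness of flag4} is about.

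Hence Lemma~\ref{lem:smoothness of flag1} applies directly, and your first two paragraphs already finish the proof. Drop the workaround; as written it is not a valid substitute anyway.

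\begin{enumerate}
\item Your claim that every first-order deformation of $S$ killed by $\delta_{\mathcal O_S(E)}$ lifts together with $E$ requires lifting the section $s_E$ of the extended line bundle. The obstruction to that lies precisely in $H^1(\mathcal O_S(E))$. Smoothness of the space of pairs (line, $n$ quadrics through it) only describes deformations of $S$ that already carry a deformation of $E$.
\item Under your claimed value the argument contradicts itself. By the diagram \eqref{diag:cohomology rel. hilb and pic} with $C=E$, $\delta_{\mathcal O_S(E)}$ factors as $H^0(N_{S/\mathbb P})\to H^1(N_{E/S})\to H^2(\mathcal O_S)$, and the kernel of the second map is $H^1(\mathcal O_S(E))$. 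If that kernel had dimension $n-2$, then $\delta_{\mathcal O_S(E)}=0$. The tangent space of $\Pic\mathcal S$ at $([\mathcal O_S(E)],S)$ would then be all of $H^0(N_{S/\mathbb P})$. By \eqref{ses:normal2} for the pair $(E,S)$, that space has dimension $\dim\HF^{ci}_{t+1,P(t)}\mathbb P+n-2$. So $\Pic\mathcal S$ could not be smooth of the dimension $\dim\HF^{ci}_{t+1,P(t)}\mathbb P-\dim|E|$ that you assign to it.
\item Your intermediate count $\dim\HF^{ci}_{t+1,P(t)}\mathbb P-h^0(\mathcal O_S(E))$ is off by one. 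The fibre of $\pi$ is $|E|$, of dimension $h^0(\mathcal O_S(E))-1=0$.
\end{enumerate}
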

\Proof
By Serre duality, we see that 
$H^1(\mathcal O_S(E))\simeq H^1(\mathcal O_S(n-3)(-E))^{\vee}$,
where the last cohomology group vanishes, because
$H^1(\mathcal I_{E/S}(n-3))\simeq H^1(\mathcal I_E(n-3))=0$
by the projective normality of $E \subset \mathbb P$.
Since $m\gg 0$, 
we have $H^1(\mathcal O_S(C))=0$ by Lemma~\ref{lem:vanishing-D}.
Therefore $\HF \mathbb P$ is nonsingular at $(C,S)$
by Lemmas \ref{lem:smoothness of flag1} and \ref{lem:smoothness of flag2}.
Finally we obtain the local dimension of $\HF \mathbb P$ at $(C,S)$
by the same lemma, because $\dim|E|=0$ and $|C|=|mD|$.
\qed

\begin{rmk}
  \label{rmk:formula for dim|mD|}
  In Lemma~\ref{lem:smoothness of flag3},
  $\dim|mD|$ depends only on two integers $m$ and $n$. 
  Indeed, since $D$ is nef and big and $m\gg 0$,
  $K_S-mD$ is not effective.
  Then by Serre duality, we see that
  $H^2(S,mD) \simeq H^0(S,K_S-mD)^{\vee}=0$.
  This implies that $h^0(S,mD)=\chi(S,mD)$ by Lemma~\ref{lem:vanishing-D}.
  Then by \eqref{eqn:euler char},
  it follows from the Riemann-Roch theorem on surfaces that
  $\dim |mD|$ is equal to the number
  \begin{equation}
    \label{eqn:formula for dim|mD|}
    \lambda_{n,m}:=
    \dfrac{m(2^n(n-1)+1))(m(n-1)-n+3)}2+2^{n-3}(n^2-5n+8)-1.
  \end{equation}
\end{rmk}

Let $\HF^{sc} \mathbb P$ denote the inverse image of 
$\Hilb^{sc} \mathbb P$ by the first projection 
$pr_1: \HF \mathbb P \to \Hilb \mathbb P$,
and let $pr_1'$ denote the restriction of $pr_1$ to $\HF^{sc} \mathbb P$.

\begin{prop}
  \label{prop:generic smoothness}
  Suppose that $n>3$, $p = \car k \nmid m$ and
  $m$ is sufficiently large.
  \begin{enumerate}
    \item The first projection
    $pr_1': \HF^{sc} \mathbb P \to \Hilb^{sc} \mathbb P$ is smooth at $(C,S)$,
    and hence $\Hilb^{sc} \mathbb P$ is nonsingular at $[C]$.
    \item The $S$-maximal family $W_{C,S}$ of $\Hilb^{sc} \mathbb P$
    containing $[C]$
    becomes a generically smooth component of $\Hilb^{sc} \mathbb P$
    of dimension 
    $$
    \dfrac{n(n+2)(n+3)}2+2n+2+\lambda_{n,m},
    $$
    where $\lambda_{n,m}$ is the number defined in \eqref{eqn:formula for dim|mD|}.
  \end{enumerate}
\end{prop}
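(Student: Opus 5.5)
The plan is to apply criterion \eqref{item:HF4} of \S\ref{subsec:flag and picard}: $pr_1$ is smooth at $(C,S)$ as soon as $H^1(\mathcal I_{C/S}\otimes_S N_{S/\mathbb P})=0$. Since $S$ is a complete intersection of $n$ quadrics, $N_{S/\mathbb P}\simeq \mathcal O_S(2)^{\oplus n}$. Since $C\sim mD$, we have $\mathcal I_{C/S}\simeq \mathcal O_S(-mD)$. Hence
$$
H^1(\mathcal I_{C/S}\otimes_S N_{S/\mathbb P})\simeq H^1(S,\mathcal O_S(2\mathbf h-mD))^{\oplus n},
$$
which vanishes for $m\gg0$ and $n>3$ by Lemma~\ref{lem:vanishing-D}(4). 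Thus $pr_1$, and hence its restriction $pr_1'$ to the open subscheme $\HF^{sc}\mathbb P$, is smooth at $(C,S)$.

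Next, Lemma~\ref{lem:smoothness of flag3} uses the hypotheses $p\nmid m$ and $m\gg0$ to give that $\HF\mathbb P$ is nonsingular at $(C,S)$, of dimension $\frac{n(n+2)(n+3)}2+2n+2+\dim|mD|$. A smooth morphism from a scheme nonsingular at a point has nonsingular target at the image point. Therefore $\Hilb^{sc}\mathbb P$ is nonsingular at $[C]$, which proves (1).

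For (2), I will invoke the discussion before \eqref{ses:first tangent map}, i.e.\ \cite[Theorem~2.4]{Nasu6}. Because $\HF\mathbb P$ is nonsingular at $(C,S)$ and $pr_1$ is smooth there, $pr_1$ maps the unique component $\mathcal W_{C,S}$ through $(C,S)$ onto the unique component $W_{C,S}$ through $[C]$. Moreover, $\Hilb\mathbb P$ is generically smooth along $W_{C,S}$. Since $C$ is a general smooth connected member of $|mD|$, the component $W_{C,S}$ meets the open subscheme $\Hilb^{sc}\mathbb P$, so it is a generically smooth component of $\Hilb^{sc}\mathbb P$.

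For the dimension, I use the exact sequence \eqref{ses:normal1}. The kernel of the tangent map $p_1$ is $H^0(\mathcal I_{C/S}\otimes N_{S/\mathbb P})\simeq H^0(\mathcal O_S(2\mathbf h-mD))^{\oplus n}$. This group vanishes for $m\gg0$, because $D$ is nef and big and so $(2\mathbf h-mD)\cdot \mathbf h<0$. With $H^1(\mathcal I_{C/S}\otimes N_{S/\mathbb P})=0$ as well, the map $p_1$ is an isomorphism of tangent spaces, so
$$
h^0(N_{C/\mathbb P})=\dim_{(C,S)}\HF\mathbb P.
$$
Because $pr_1$ is smooth with zero-dimensional fibers near $(C,S)$, it is étale there, and hence $\dim W_{C,S}=\dim_{(C,S)}\HF\mathbb P$. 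Finally, $\dim|mD|=\lambda_{n,m}$ by Remark~\ref{rmk:formula for dim|mD|}, which gives the stated dimension.

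The only real input is the vanishing in Lemma~\ref{lem:vanishing-D}(4), which is already established. The remaining care is the bookkeeping that general members of $|mD|$ are smooth and connected. This should follow from base-point-freeness of $|mD|$ (it is pulled back from an ample divisor on $\overline S$) together with Bertini, or be taken from part (1) of Theorem~\ref{thm:main1}.
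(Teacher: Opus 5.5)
Your proposal is correct and follows essentially the same route as the paper. The paper also gets smoothness of $pr_1$ from criterion \eqref{item:HF4} and Lemma~\ref{lem:vanishing-D}(4), takes nonsingularity and the dimension of $\HF\mathbb P$ at $(C,S)$ from Lemma~\ref{lem:smoothness of flag3}, and identifies the tangent spaces using $H^i(N_{S/\mathbb P}(-C))=0$ for $i=0,1$; your argument that $(2\mathbf h-mD)\cdot\mathbf h<0$ explicitly justifies the $H^0$-vanishing, which the paper only asserts.
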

\Proof
Lemma~\ref{lem:vanishing-D} shows that $H^1(S,2\mathbf h-C)=0$ by $n>3$.
Since $N_{S/\mathbb P}\simeq \mathcal O_S(2)^{\oplus n}$,
we see that $H^1(S,N_{S/\mathbb P}(-C))=0$. 
Then it follows from the exact sequence \eqref{ses:normal1}, 
or directly from \S\ref{subsec:flag and picard} \eqref{item:HF4}, 
that the morphism
$pr_1: \HF \mathbb P \to \Hilb \mathbb P$,
which corresponds to the first projection, is smooth at $(C,S)$.
By the smoothness, (1) follows.

By Lemma~\ref{lem:smoothness of flag3},
there exists a unique irreducible component $\mathcal W_{C,S}$ 
of $\HF^{sc} \mathbb P$ passing through $(C,S)$.
Since every smooth morphism maps a generic point to a generic point 
\cite[Proposition~9.7,~III]{Hartshorne},
$\mathcal W_{C,S}$ dominates 
an irreducible component $W_{C,S}$ of $\Hilb^{sc} \mathbb P$,
where $W_{C,S}$ is the $S$-maximal family
of $\Hilb^{sc} \mathbb P$ containing $[C]$.
We note that $\Hilb^{sc} \mathbb P$ is smooth along an open dense subset
of $W_{C,S}$ by (1). Since
$pr_1': \HF^{sc} \mathbb P \rightarrow  \Hilb^{sc} \mathbb P$ 
induces a map 
$p_1: H^0(N_{(C_{\eta},S_{\eta})/\mathbb P}) \to H^0(N_{C_{\eta}/\mathbb P})$ 
from the tangent space of $\HF^{sc} \mathbb P$
to that of $\Hilb^{sc} \mathbb P$
at the generic point $(C_{\eta},S_{\eta})$ of $\mathcal W_{C,S}$,
and this map is an isomorphism by $H^i(N_{S/\mathbb P}(-C))=0$ ($i=0,1$),
(2) follows from Lemma~\ref{lem:smoothness of flag3}.
\qed

\medskip 

In what follows, we assume
that $n=3$,~i.e.,~$S=S_{2,2,2} \subset \mathbb P=\mathbb P^{5}$.
We need the following lemma.
\begin{lem}
  \label{lem:exterior deformations}
  Let $S$ be a smooth complete intersection surface
  $S_{2,2,2} \subset \mathbb P^5$,
  $C$ a smooth connected curve on $S$ such that
  $$
  C \sim mD, \quad D:=2\mathbf h +E,\quad m \gg 0, \quad \mbox{and} 
  \quad \car k=p \nmid m.
  $$
  Then
  \begin{enumerate}
    \item \label{item:vanishing of H^0 and H^1}
    $H^i(S,N_{S/\mathbb P}(E-C))=0$ for $i=0,1$.
    \item \label{item:unobstructed of flag}
    $H^1(\mathbb P,N_{(C,S)/\mathbb P})=0$.
    \item \label{item:cokernel of tangent map}
    The tangent map
    $$
    p_1: T_{\HF \mathbb P^5, (C,S)} 
    \longrightarrow  
    T_{\Hilb \mathbb P^5, [C]}=H^0(C,N_{C/\mathbb P^5})
    $$
    of the first projection $pr_1$ at $(C,S)$ is injective
    and $\coker p_1$ is isomorphic to
    $$
    H^1(S,N_{S/\mathbb P}(-C)) \simeq k^3.
    $$
  \end{enumerate}
\end{lem}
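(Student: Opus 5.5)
Here $N_{S/\mathbb P}\simeq \mathcal O_S(2)^{\oplus 3}$ and $K_S=0$. All three assertions reduce to cohomology of line bundles on $S$ and the exact sequences \eqref{ses:normal1}, \eqref{ses:normal2}.

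For (1), we need $H^i(\mathcal O_S(2\mathbf h+E-mD))=0$ for $i=0,1$. The case $i=0$ follows from degree: $D$ is nef and big, so
$$(2\mathbf h+E-mD)\cdot D=(2\mathbf h+E)\cdot D-mD^2<0$$
for $m\gg0$, and the bundle has no sections. For $i=1$, Serre duality with $K_S=0$ gives $H^1(\mathcal O_S(mD-2\mathbf h-E))^\vee$. We write $mD-2\mathbf h-E=(m-1)D$, since $D=2\mathbf h+E$. Then $H^1(\mathcal O_S((m-1)D))=0$ for $m\gg0$ by Lemma~\ref{lem:vanishing-D}(1).

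For (2), we use the exact sequence \eqref{ses:normal2}. We have $H^1(N_{S/\mathbb P})=H^1(\mathcal O_S(2))^{\oplus3}=0$ by Lemma~\ref{lem:vanishing of H^1_*O_S}. Next, $N_{C/S}=\mathcal O_C(C)$, and $H^1(\mathcal O_C(C))$ sits between $H^1(\mathcal O_S(C))$ and $H^2(\mathcal O_S)$. Here $H^1(\mathcal O_S(C))=0$ by Lemma~\ref{lem:vanishing-D}, and $H^2(\mathcal O_S)\simeq k$, so $H^1(N_{C/S})$ need not vanish. We therefore argue via the nonsingularity established in Lemma~\ref{lem:smoothness of flag3}. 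The sequence gives
$$H^0(N_{S/\mathbb P})\xrightarrow{\delta_C}H^1(N_{C/S})\to H^1(N_{(C,S)/\mathbb P})\to 0,$$
so it suffices to show $\delta_C$ is surjective. Since $H^1(\mathcal O_S(C))=0$, we have $H^1(N_{C/S})\simeq H^2(\mathcal O_S)$, and $\delta_C$ is identified with $\delta_{\mathcal L}=m\,\delta_{\mathcal O_S(E)}$, using additivity and $\delta_{\mathcal O_S(\mathbf h)}=0$. Thus surjectivity reduces to the surjectivity of $\delta_{\mathcal O_S(E)}$. That map is the composite of $H^0(N_{S/\mathbb P})\to H^1(N_{E/S})$ with the map $H^1(N_{E/S})\to H^2(\mathcal O_S)$ coming from $0\to\mathcal O_S\to\mathcal O_S(E)\to N_{E/S}\to0$. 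The second map is surjective because $H^2(\mathcal O_S(E))=H^0(\mathcal O_S(-E))^\vee=0$. The first map is surjective because its cokernel injects into $H^1(N_{(E,S)/\mathbb P})=0$ (Remark~\ref{rmk:smoothness of flag4}). Finally $p\nmid m$ keeps the factor $m$ invertible. This is the main delicate step.

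For (3), we use the sequence \eqref{ses:first tangent map}. The kernel of $p_1$ is $H^0(N_{S/\mathbb P}(-C))$, which vanishes because $\mathcal O_S(2\mathbf h-mD)$ has negative degree against $D$; so $p_1$ is injective. By (2), $H^1(N_{(C,S)/\mathbb P})=0$, so $\coker p_1\simeq H^1(N_{S/\mathbb P}(-C))=H^1(\mathcal O_S(2\mathbf h-C))^{\oplus3}$. We compute this using
$$0\to\mathcal O_S(2\mathbf h-C)\to\mathcal O_S(2\mathbf h+E-C)\to\mathcal O_E(2\mathbf h+E-C)\to0.$$
The restriction $\mathcal O_E(D)$ is trivial, so the last term is $\mathcal O_E(-(m-1)D)\simeq\mathcal O_E$. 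Combining this with (1) gives $H^1(\mathcal O_S(2\mathbf h-C))\simeq H^0(\mathcal O_E)=k$. Hence $\coker p_1\simeq k^3$.
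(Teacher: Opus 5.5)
Your proof is correct and follows essentially the same route as the paper: parts (1) and (3) match it (up to using Serre duality with Lemma~\ref{lem:vanishing-D}(1) instead of (2)), and in (2) you reduce, as the paper does, to $\delta_{\mathcal O_S(C)}=m\,\delta_{\mathcal O_S(E)}$ together with the vanishing $H^1(N_{(E,S)/\mathbb P})=0$ from Remark~\ref{rmk:smoothness of flag4}. The only variation is that you show $\delta_{\mathcal O_S(E)}$ is surjective onto $H^2(\mathcal O_S)$ via surjectivity of the coboundary $H^1(N_{E/S})\to H^2(\mathcal O_S)$ (from $H^2(\mathcal O_S(E))=0$), whereas the paper uses its injectivity (from $H^1(\mathcal O_S(E))=0$), which is equally valid; your remark about arguing ``via the nonsingularity of Lemma~\ref{lem:smoothness of flag3}'' is never actually used and can be dropped.
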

\Proof
\noindent
We note that $2\mathbf h+E-C\sim (1-m)D$.
Then it follows from Lemma~\ref{lem:vanishing-D} that
$H^1(S,(1-m)D)=0$ by $m \gg 0$, while
$H^0(S,(1-m)D)=0$ because $D$ is nef and big.
Since 
$N_{S/\mathbb P}(E-C)\simeq \mathcal O_S((1-m)D)^{\oplus 3}$,
we obtain \eqref{item:vanishing of H^0 and H^1}.

We prove \eqref{item:unobstructed of flag}.
It follows from Lemma~\ref{lem:vanishing-D} that $H^1(\mathcal O_S(C))=0$.
Since $H^1(N_{S/\mathbb P})=0$,
the commutative diagram \eqref{diag:cohomology rel. hilb and pic}
shows that 
$$
H^1(\mathbb P,N_{(C,S)/\mathbb P})\simeq \coker \delta_C 
\hookrightarrow \coker \delta_{\mathcal O_S(C)}.
$$
Moreover, since $S$ is $K3$, the target space
$H^2(S,\mathcal O_S)$ of $\delta_{\mathcal O_S(C)}$
is of dimension one. Thus 
it suffices to show that 
the map $\delta_{\mathcal O_S(C)}$ is nonzero.
We recall the definition \eqref{eqn:definition of delta}
of $\delta_{\mathcal L}$ and its additive property,~i.e.,
$\delta_{\mathcal L \otimes \mathcal \mathcal M}
=\delta_{\mathcal L}+\delta_{\mathcal M}$ 
for any $\mathcal M,\mathcal L \in \Pic S$.
We also have $\delta_{\mathcal L}=0$ for any
$\mathcal L \in \im [\iota^*:\Pic \mathbb P^5 \to \Pic S]$.
Therefore, $\delta_{\mathcal O_S(C)}=m\delta_{\mathcal O_S(E)}$
by $C \sim mD$ and $D=2\mathbf h +E$.
Then $\delta_{\mathcal O_S(E)}\ne 0$.
Indeed, there exists a first-order deformation $\tilde S$ of $S$
in $\mathbb P$, to which the closed subscheme 
$E$ of $S$ does not lift
(cf.~Remark~\ref{rmk:smoothness of flag4}).
This implies that neither does $\mathcal O_S(E)$ by $H^1(\mathcal O_S(E))=0$.
Therefore, we obtain $\delta_{\mathcal O_S(C)} \ne 0$ by $p \nmid m$.

Finally we prove \eqref{item:cokernel of tangent map}.
Since $D\cdot E=0$, the sheaf $N_{S/\mathbb P}(E-C)\big{\vert}_E$ 
on $E$ is the trivial free sheaf of rank three.
Therefore, there exists an exact sequence
$$
0 \longrightarrow 
N_{S/\mathbb P}(-C)
\longrightarrow
N_{S/\mathbb P}(E-C)
\overset{|_E}{\longrightarrow}
\mathcal O_E^{\oplus 3}
\longrightarrow
0
$$
on $S$. We deduce $H^1(N_{S/\mathbb P}(-C))\simeq k^3$
and $H^0(N_{S/\mathbb P}(-C))=0$
from this sequence and \eqref{item:vanishing of H^0 and H^1}.
Then it follows from \eqref{ses:normal1} and 
\eqref{item:unobstructed of flag} that
there exists an exact sequence
$$
0 \longrightarrow H^0(\mathbb P,N_{(C,S)/\mathbb P})
\overset{p_1}\longrightarrow H^0(C,N_{C/\mathbb P})
\longrightarrow H^1(S,N_{S/\mathbb P}(-C)) \simeq k^3
\longrightarrow 0, 
$$
and thereby \eqref{item:cokernel of tangent map} follows. \qed

\medskip

The following proposition shows that 
every first-order infinitesimal deformation of $C$ in $\mathbb P^5$ 
that does not arise from a deformation of the flag $C\subset S\subset\mathbb P^5$ 
is obstructed at the second order.
\begin{prop}
  \label{prop:obstruction}
  Let $S$ be a smooth complete intersection 
  $S_{2,2,2} \subset \mathbb P^5$,
  $C$ a smooth connected curve on $S$ such that
  $$
  C \sim mD, \quad D:=2\mathbf h +E,\quad m \gg 0, \quad \mbox{and} 
  \quad \car k=p \nmid m.
  $$
  Let $\alpha$ be a global section of $N_{C/\mathbb P^5}$,
  and $p_1$ the tangent map of the first projection
  $pr_1: \HF \mathbb P^5 \rightarrow \Hilb \mathbb P^5$
  at $(C,S)$ in Lemma~\ref{lem:exterior deformations}.
  If $\alpha$ is not contained in $\im p_1$,
  then
  $\ob(\alpha)\ne 0$ in $H^1(N_{C/\mathbb P^5})$.
\end{prop}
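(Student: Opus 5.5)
We follow the strategy of Mukai--Nasu: reduce the obstruction of $\alpha$ to its exterior component $\ob_S(\alpha)\in H^1(C,N_{S/\mathbb P}\big{\vert}_C)$, then compute that component by means of an infinitesimal deformation of $S$ with a pole along $E$. It suffices to show $\ob_S(\alpha)\ne 0$, since $\ob_S(\alpha)$ is the image of $\ob(\alpha)$ under $\pi_{C/S}$.

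\emph{Step 1 (lifting the exterior component to a section with pole).} Consider the sequence
$$
0\to N_{S/\mathbb P}(E-C)\to N_{S/\mathbb P}(E)\to N_{S/\mathbb P}\big{\vert}_C\to 0,
$$
which makes sense because $C\cap E=\emptyset$ (note $C\cdot E=mD\cdot E=0$, and $C$ is general). By Lemma~\ref{lem:exterior deformations}\eqref{item:vanishing of H^0 and H^1}, $H^1(N_{S/\mathbb P}(E-C))=0$. Hence there is $\beta\in H^0(N_{S/\mathbb P}(E))$ with $\beta\big{\vert}_C=\pi_{C/S}(\alpha)$.

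Next we argue $\beta\notin H^0(N_{S/\mathbb P})$. Suppose instead $\beta\in H^0(N_{S/\mathbb P})$. Then $\pi_{C/S}(\alpha)$ lies in the image of $H^0(N_{S/\mathbb P})\to H^0(N_{S/\mathbb P}\big{\vert}_C)$. By the fiber-product description of $N_{(C,S)/\mathbb P}$, the pair $(\alpha,\beta)$ then gives a section of $N_{(C,S)/\mathbb P}$ mapping to $\alpha$, so $\alpha\in\im p_1$, a contradiction. (Equivalently: the map $H^0(N_{C/\mathbb P})\to H^1(N_{S/\mathbb P}(-C))\simeq k^3$ factors through the connecting map $H^0(N_{S/\mathbb P}\big{\vert}_C)\to H^1(N_{S/\mathbb P}(-C))$, whose kernel is the image of $H^0(N_{S/\mathbb P})$.)

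\emph{Step 2 (reduction).} By Lemma~\ref{lem:reduction2}, $\ob_S(\alpha)=\ob(\beta^\circ)\big{\vert}_C$.

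\emph{Step 3 (nonvanishing after restriction to $C$).} This is the main obstacle. Theorem~\ref{thm:main2} gives $\ob(\beta^\circ)\ne0$ on $S^\circ$, but we need its restriction to $C$ to be nonzero. From the proof of Theorem~\ref{thm:main2}, $\ob(\beta^\circ)$ lies in the subgroup $H^1(S,N_{S/\mathbb P}(3E))$. More precisely, for a suitable quadric $q$, $\ob(\beta)(\bar q)=-c\sum_k t_k\cup v_k\in H^1(\mathcal O_S(2)(3E))$, and its restriction to $E$ is nonzero.

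We therefore study the restriction map $H^1(S,N_{S/\mathbb P}(3E))\to H^1(C,N_{S/\mathbb P}\big{\vert}_C)$. Its kernel is the image of $H^1(N_{S/\mathbb P}(3E-C))$. Now $3E-C\sim 3E-mD$, and Serre duality on the $K3$ surface gives
$$
H^1(\mathcal O_S(2)(3E-C))\simeq H^1(\mathcal O_S(mD-2\mathbf h-3E))^\vee=H^1(\mathcal O_S((m-1)D-E))^\vee .
$$
By Lemma~\ref{lem:higher-direct-image}, with $\deg\mathcal O_S(-E)\big{\vert}_E=2\ge -1$, together with the Leray/Serre vanishing argument of Lemma~\ref{lem:vanishing-D}, this group vanishes for $m\gg0$. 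So the restriction map is injective. This injectivity is where care is needed: the line bundle $-E$ must be checked to satisfy the hypothesis of Lemma~\ref{lem:higher-direct-image}, which it does. Hence $\ob(\beta^\circ)\big{\vert}_C\ne0$, so $\ob_S(\alpha)\ne0$, and therefore $\ob(\alpha)\ne0$.
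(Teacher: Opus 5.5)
Your proposal is correct and follows the paper's proof step for step. You lift $\pi_{C/S}(\alpha)$ to $\beta\in H^0(N_{S/\mathbb P}(E))\setminus H^0(N_{S/\mathbb P})$ via $H^1(N_{S/\mathbb P}(E-C))=0$, apply Lemma~\ref{lem:reduction2}, use the fact from the proof of Theorem~\ref{thm:main2} that $\ob(\beta)$ is a nonzero element of $H^1(N_{S/\mathbb P}(3E))$, and conclude by injectivity of restriction to $C$.

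One small slip: $mD-2\mathbf h-3E=(m-1)D-2E$, not $(m-1)D-E$. This is harmless, because the vanishing you actually need, $H^1(\mathcal O_S((m-1)D-2E))=0$, is exactly Lemma~\ref{lem:vanishing-D}(3). Equivalently, Lemma~\ref{lem:higher-direct-image} applies since $\deg\mathcal O_S(-2E)\big{\vert}_E=4\ge -1$.
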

\Proof
The proof is similar to those 
of \cite[Proposition~4.10]{Nasu4} and \cite[Theorem~4.4]{Nasu10}.
Let $\alpha$ be a global section of $N_{C/\mathbb P}$,
not contained in $\im p_1$.
We consider the exterior components 
$u:=\pi_{C/S}(\alpha) \in H^0(N_{S/\mathbb P}\big{\vert}_C)$ 
and $\ob_S(\alpha) \in H^1(N_{S/\mathbb P}\big{\vert}_C)$ 
of $\alpha$ and $\ob(\alpha)$, respectively
(see ~\S\ref{subsec:infinitesimal}).
We prove that $\ob_S(\alpha)\ne 0$ instead of $\ob(\alpha)\ne 0$.
By \eqref{ses:normal1}, there exists a commutative diagram
$$
\begin{CD}
  0 @>>> N_{S/\mathbb P}(-C) 
  @>>> N_{(C,S)/\mathbb P} @>{\pi_1}>>
  N_{C/\mathbb P} @>>> 0\\
  @. \Vert @. @V{\pi_2}VV @V{\pi_{C/S}}VV  @. \\
  0 @>>> N_{S/\mathbb P}(-C) 
  @>>> N_{S/\mathbb P} @>{|_C}>>
  N_{S/\mathbb P}\big{\vert}_C @>>> 0
\end{CD}
$$
of exact sequences of normal sheaves of $C$, $S$ and $(C,S)$.
Since $\alpha\not\in \im p_1$, $u=\pi_{C/S}(\alpha)$ is not contained in the image of 
$H^0(N_{S/\mathbb P}) \rightarrow H^0(N_{S/\mathbb P}\big{\vert}_C)$.
This implies that there exist no global sections $\beta'$ of $N_{S/\mathbb P}$
such that $\beta'\big{\vert}_C=u$.
Tensoring the lower horizontal sequence with $\mathcal O_S(E)$
and taking a long exact sequence of cohomology groups, 
we obtain an exact sequence
$$
H^0(S,N_{S/\mathbb P}(E)) \overset{|_C}{\longrightarrow}
H^0(C,N_{S/\mathbb P}(E)\big{\vert}_C) \longrightarrow
H^1(S,N_{S/\mathbb P}(E-C)),
$$
where the last cohomology group 
vanishes by Lemma~\ref{lem:exterior deformations}.
Therefore,
there exists a global section $\beta$ of $N_{S/\mathbb P}(E)$
such that 
$$
\beta\big{\vert}_C=\bar u \quad 
\mbox{in} \quad H^0(N_{S/\mathbb P}(E)\big{\vert}_C).
$$
Here 
there exists a natural inclusion
$H^0(N_{S/\mathbb P}\big{\vert}_C) \hookrightarrow 
H^0(N_{S/\mathbb P}(E)\big{\vert}_C)$ since $E$ is an effective divisor on $S$,
and $\bar u$ denotes the image of $u$ by this map.
Moreover, $\beta$ is not contained in $H^0(N_{S/\mathbb P})$,
otherwise, by the above commutative diagram,
$\alpha$ is contained in $\im p_1$.
Thus $\beta$ defines {\em an infinitesimal deformation with a pole} along $E$
(see \S\ref{sec:with pole}).

Let $S^{\circ}:=S\setminus E$ and 
$\mathbb P^{\circ}:=\mathbb P\setminus E$.
Then there exists a natural injection
$H^0(S,N_{S/\mathbb P}(E)) \hookrightarrow
H^0(S^{\circ},N_{S^{\circ}/\mathbb P^{\circ}})$ of cohomology groups.
In what follows, we identify $\beta \in H^0(N_{S/\mathbb P}(E))$ 
with its image $\beta^{\circ}$ in $H^0(N_{S^{\circ}/\mathbb P^{\circ}})$. 
Since $\beta \in H^0(N_{S/\mathbb P}(E))\setminus H^0(N_{S/\mathbb P})$,
we obtain $\ob(\beta)\ne 0$ in $H^1(N_{S^{\circ}/\mathbb P^{\circ}})$
by virtue of Theorem~\ref{thm:main2}.
In fact, the proof of the same theorem shows that
$\ob(\beta)$ is contained in 
a smaller subgroup $H^1(S,N_{S/\mathbb P}(3E))$ of
$H^1(S^{\circ},N_{S^{\circ}/\mathbb P^{\circ}})$
and $\ob(\beta)\big{\vert}_E$ is nonzero in 
$H^1(E,N_{S/\mathbb P}(3E)\big{\vert}_E)\simeq 
H^1(E,\mathcal O_E(-4)^{\oplus 3})$.
Then it follows from Lemma~\ref{lem:reduction2} that
$$
\ob_S(\alpha) =\ob(\beta)\big{\vert}_C
$$
in $H^1(N_{S/\mathbb P}\big{\vert}_C)$,
because $C$ is contained in $S^{\circ}=S\setminus E$.

Finally we show that 
$\ob(\beta)\big{\vert}_C\ne 0$ 
in $H^1(C,N_{S/\mathbb P}\big{\vert}_C)$.
There exists an exact sequence 
$$
0 \longrightarrow N_{S/\mathbb P}(3E-C)
\longrightarrow N_{S/\mathbb P}(3E) 
\overset{|_C}\longrightarrow N_{S/\mathbb P}\big{\vert}_C
\longrightarrow 0
$$
on $S$. 
We note by $D=2\mathbf h+E$ and $C\sim mD$ that 
$$
N_{S/\mathbb P}(3E-C) 
\simeq \mathcal O_S(2)(3E-C)^{\oplus 3}
\simeq \mathcal O_S(2E-(m-1)D)^{\oplus 3}.
$$
Thus $H^1(S,N_{S/\mathbb P}(3E-C))=0$ by Lemma~\ref{lem:vanishing-D} 
and Serre duality on the $K3$ surface $S$.
This implies the restriction map 
$H^1(S,N_{S/\mathbb P}(3E)) \rightarrow H^1(C,N_{S/\mathbb P}\big{\vert}_C)$
is injective.
Therefore we have $\ob_S(\alpha)=\ob(\beta)\big{\vert}_C\ne 0$.
\qed

\medskip

Now we prove our main theorem.

\paragraph{\bf Proof of Theorem~\ref{thm:main1}}
Let $S,\mathbb P,E,\Lambda$ and $D$ be as in the theorem, 
and let $C$ be a general member of the complete linear system $\Lambda=|mD|$. 
By Lemma~\ref{lem:unprojection}, the linear system $|D|$ defines a birational morphism 
$f:S\to\overline S\subset\mathbb P^N$ contracting $E$ to a point $q$. 
Since $D\cdot E=0$, a general member of $|D|$ is disjoint from $E$ 
and is isomorphic to a hyperplane section of $\overline S$ not passing through $q$. 
Similarly, $C$ is disjoint from $E$ and corresponds to 
a general member of the linear system $|\mathcal O_{\overline S}(m)|$. 
By Bertini's theorem (cf.~\cite[Chap.~II, Theorem~8.18]{Hartshorne}), $C$ is smooth.
Moreover, since $H^1(\mathcal O_S(-mD))=0$ for sufficiently large $m$
by Lemma~\ref{lem:vanishing-D}, it follows from the exact sequence
$0 \rightarrow \mathcal O_S(-C)
\rightarrow \mathcal O_S
\rightarrow \mathcal O_C
\rightarrow 0$ that $C$ is connected.
Thus \eqref{item:nonsingularity and dimension of flag} 
follows from Lemma~\ref{lem:smoothness of flag3}.
Secondly, \eqref{item:conclusion for n>3} follows from Proposition~\ref{prop:generic smoothness}.
Finally we prove \eqref{item:conclusion for n=3}. 
Suppose that $n=3$ and let $W_{C,S}$ be the $S$-maximal family in 
$\Hilb^{sc} \mathbb P^5$ containing $[C]$.
Since we have $H^0(N_{S/\mathbb P^5}(-C))=0$ by Lemma~\ref{lem:exterior deformations},
the first projection $pr_1$ embeds the tangent space of 
$\HF^{sc} \mathbb P^5$ at $(C,S)$ into that of $\Hilb^{sc} \mathbb P^5$ at $[C]$.
This implies $\dim W_{C,S}=\dim_{(C,S)} \HF^{sc} \mathbb P^5$.
Then the same lemma shows that
$$
\begin{aligned}
  h^0(C,N_{C/\mathbb P^5})
  &=h^0(\mathbb P^5,N_{(C,S)/\mathbb P^5})+h^1(S,N_{S/\mathbb P^5}(-C))\\
  &=\dim W_{C,S}+3.
\end{aligned}
$$
Then by virtue of Proposition~\ref{prop:obstruction},
we have $\ob(\alpha)\ne 0$ for every $\alpha \not\in \im p_1$,
where $p_1$ is the tangent map of 
the first projection 
$pr_1: \HF^{sc} \mathbb P^5 \rightarrow \Hilb^{sc} \mathbb P^5$ at $(C,S)$.
This implies that $\Hilb^{sc} \mathbb P^5$ is singular at $[C]$
and moreover by Lemma~\ref{lem:maximality}, 
$W_{C,S}$ is an irreducible component of $(\Hilb^{sc} \mathbb P^5)_{\red}$
and 
$$
\dim_{(C,S)} \HF^{sc} \mathbb P^5 = \dim_{[C]} \Hilb^{sc} \mathbb P^5.
$$
By replacing $(C,S)$ with a general member
$(C',S')$ of the unique irreducible 
component $\mathcal W_{C,S}$ of $\HF^{sc} \mathbb P^5$
passing through $(C,S)$, we have
$h^0(C',N_{C'/\mathbb P^5})=\dim_{[C']} \Hilb^{sc} \mathbb P^5+3$ 
and $\Hilb^{sc} \mathbb P^5$ is singular at $[C']$.
Thus we have completed the proof of Theorem~\ref{thm:main1}.
\qed

\begin{rmk}
  The degree $d$ and genus $g$ of curves $C$ in Theorem~\ref{thm:main1} are given by
  $$
  \begin{aligned}
    d&=m(2^n (n-1)+1),\\
    g&=\dfrac{m(2^n(n-1)+1)(m(n-1)+n-3)}2+1
  \end{aligned}
  $$
  by $d=C\cdot \mathbf h$, $\mathbf h^2=2^n$,
  $K_S=(n-3)\mathbf h$ and adjunction formula. The formula \eqref{eqn:formula for dim|mD|}
  for $\lambda_{n,m}$ ($=\chi(S,\mathcal O_S(C))-1$) can be recovered from
  $$
  \begin{aligned}
    \chi(S,\mathcal O_S(C)) &=\dfrac12 (C^2+C\cdot K_S)-C\cdot K_S+\chi(S,\mathcal O_S)\\
    &=g-1-(n-3)d+\chi(S,\mathcal O_S)
  \end{aligned}
  $$
  by substituting the above formulas for $d$ and $g$, together with \eqref{eqn:euler char}.
\end{rmk}

\begin{rmk}
  \label{rmk:n=2 and n=1}
  In the cases $n=1$ and $n=2$, 
  we still have a similar statement to Theorem~\ref{thm:main1}~(1), but
  the statements (2) and (3) never hold.
  Indeed, $C$ is a smooth curve and 
  $\HF \mathbb P$ is nonsingular at $(C,S)$ of dimension $m+9$ if $n=1$ and 
  $5m(m+1)/2+26$ if $n=2$.
  See e.g.~\cite[Lemma~13]{Kleppe87} and \cite[Lemma~2.3]{Nasu10} for the proof.
  If $n=1$ then $C$ is not connected.
  In fact, $S_2$ is a smooth quadric surface ($\simeq \mathbb P^1 \times \mathbb P^1$)
  in $\mathbb P^3$, and since $C\sim mE$, 
  $C$ is a disjoint union of $m$ rulings of $S_2\to \mathbb P^1$.
  Moreover, it is known that
  every irreducible component of $\Hilb^{sc} \mathbb P^3$ 
  is of dimension at least $4d=4m$. 
  Thus $C$ is not a general member of any component if $n=1$ and $4m> m+9$.
  If $n=2$, then every general curve $C\sim m(\mathbf h+E) \subset S_{2,2}$ is connected, but
  the $S$-maximal family $W_{C,S}$ containing $[C]$ is not an irreducible component 
  of $\Hilb^{sc} \mathbb P^4$.
  Indeed, it follows from \cite[Example~3.8]{Nasu10} that
  $C$ is a projection of a curve $\Gamma \subset \mathbb P^5$,
  sitting on a quintic del Pezzo surface $F \subset \mathbb P^5$,
  which is an unprojection morphism of $E$ on $S=S_{2,2}$.
  In particular, $C$ is not linearly normal.
  Moreover, we see that if $m\ge 3$ then there exists a global deformation of 
  $C \subset \mathbb P^4$ such that $C \not\subset S'_{2,2}$ 
  for any $S'_{2,2} \subset \mathbb P^4$ by 
  deformations of the center of the (un)projection.
  Therefore, $W_{C,S}$ is not a component of $\Hilb^{sc} \mathbb P^4$ if $m\ge 3$.
\end{rmk}

\paragraph{\bf Acknowledgment}

The author thanks Professor Euisung Park 
for inviting him to give a series of lectures on deformation theory 
at Korea University in September 2026. 
The preparation of these lectures helped him clarify 
some of the ideas presented in this paper.
The author acknowledges the use of ChatGPT (OpenAI) 
during the preparation of this manuscript. 
In particular, discussions with ChatGPT provided useful ideas 
for the proofs of Lemmas~\ref{lem:higher-direct-image} 
and \ref{lem:vanishing-D}. 
All mathematical arguments and statements appearing 
in the final manuscript were independently checked and 
verified by the author, who takes full responsibility 
for the content of the paper. 
This work was supported in part by JSPS KAKENHI Grant Number JP24K06677.

\bibliography{mybib}

\begin{thebibliography}{10}

\bibitem{CIK24a}
Y.~Choi, H.~Iliev, and S.~Kim.
\newblock Components of the {H}ilbert scheme of smooth projective curves using
  ruled surfaces {II}: existence of non-reduced components.
\newblock {\em Manuscripta Math.}, 175(3-4):921--942, 2024.

\bibitem{CIK24b}
Y.~Choi, H.~Iliev, and S.~Kim.
\newblock Non-reduced components of the {H}ilbert scheme of curves using triple
  covers.
\newblock {\em Mediterr. J. Math.}, 21(4):Paper No. 134, 20, 2024.

\bibitem{CLM96}
C.~Ciliberto, A.~F. Lopez, and R.~Miranda.
\newblock Some remarks on the obstructedness of cones over curves of low genus.
\newblock In {\em Higher-dimensional complex varieties ({T}rento, 1994)}, pages
  167--182. de Gruyter, Berlin, 1996.

\bibitem{Dan17}
A.~Dan.
\newblock On generically non-reduced components of {H}ilbert schemes of smooth
  curves.
\newblock {\em Math. Nachr.}, 290(17-18):2800--2814, 2017.

\bibitem{Ellia87}
P.~Ellia.
\newblock D'autres composantes non r\'eduites de {${\rm Hilb}\,{\bf P}\sp 3$}.
\newblock {\em Math. Ann.}, 277(3):433--446, 1987.

\bibitem{Ellingsrud-Peskine}
G.~Ellingsrud and C.~Peskine.
\newblock Anneau de {G}orenstein associ\'{e} \`a un fibr\'{e} inversible sur
  une surface de l'espace et lieu de {N}oether-{L}efschetz.
\newblock In {\em Proceedings of the {I}ndo-{F}rench {C}onference on {G}eometry
  ({B}ombay, 1989)}, pages 29--42. Hindustan Book Agency, Delhi, 1993.

\bibitem{Floystad93}
G.~Fl{\o}ystad.
\newblock Determining obstructions for space curves, with applications to
  nonreduced components of the {H}ilbert scheme.
\newblock {\em J. Reine Angew. Math.}, 439:11--44, 1993.

\bibitem{TDTEGA5}
A.~Grothendieck.
\newblock Technique de descente et th\'eor\`emes d'existence en g\'eom\'etrie
  alg\'ebrique. {V}. {L}es sch\'emas de {P}icard: th\'eor\`emes d'existence.
\newblock In {\em S\'eminaire {B}ourbaki, {V}ol.\ 7}, pages Exp.\ No.\ 232,
  143--161. Soc. Math. France, Paris, 1995.

\bibitem{Gruson-Peskine82}
L.~Gruson and C.~Peskine.
\newblock Genre des courbes de l'espace projectif. {II}.
\newblock {\em Ann. Sci. \'Ecole Norm. Sup. (4)}, 15(3):401--418, 1982.

\bibitem{Hartshorne}
R.~Hartshorne.
\newblock {\em Algebraic geometry}.
\newblock Springer-Verlag, New York, 1977.
\newblock Graduate Texts in Mathematics, No. 52.

\bibitem{Hartshorne10}
R.~Hartshorne.
\newblock {\em Deformation theory}, volume 257 of {\em Graduate Texts in
  Mathematics}.
\newblock Springer, New York, 2010.

\bibitem{Kleppe81}
J.~O. Kleppe.
\newblock The {H}ilbert-flag scheme, its properties and its connection with the
  {H}ilbert scheme. {A}pplications to curves in 3-space.
\newblock Preprint (part of thesis), Univ. of Oslo, 1981.

\bibitem{Kleppe87}
J.~O. Kleppe.
\newblock Nonreduced components of the {H}ilbert scheme of smooth space curves.
\newblock In {\em Space curves (Rocca di Papa, 1985)}, volume 1266 of {\em
  Lecture Notes in Math.}, pages 181--207. Springer, Berlin, 1987.

\bibitem{Kleppe06}
J.~O. Kleppe.
\newblock The {H}ilbert scheme of space curves of small diameter.
\newblock {\em Ann. Inst. Fourier (Grenoble)}, 56(5):1297--1335, 2006.

\bibitem{Kleppe-Ottem15}
J.~O. Kleppe and J.~C. Ottem.
\newblock Components of the {H}ilbert scheme of space curves on low-degree
  smooth surfaces.
\newblock {\em Internat. J. Math.}, 26(2):1550017, 30, 2015.

\bibitem{Mukai-Nasu}
S.~Mukai and H.~Nasu.
\newblock Obstructions to deforming curves on a 3-fold, {I}: {A} generalization
  of {M}umford's example and an application to {H}om schemes.
\newblock {\em J. Algebraic Geom.}, 18(4):691--709, 2009.

\bibitem{Mumford}
D.~Mumford.
\newblock Further pathologies in algebraic geometry.
\newblock {\em Amer. J. Math.}, 84:642--648, 1962.

\bibitem{Nasu1}
H.~Nasu.
\newblock Obstructions to deforming space curves and non-reduced components of
  the {H}ilbert scheme.
\newblock {\em Publ. Res. Inst. Math. Sci.}, 42(1):117--141, 2006.

\bibitem{Nasu4}
H.~Nasu.
\newblock Obstructions to deforming curves on a 3-fold, {II}: {D}eformations of
  degenerate curves on a del {P}ezzo 3-fold.
\newblock {\em Ann. Inst. Fourier (Grenoble)}, 60(4):1289--1316, 2010.

\bibitem{Nasu5}
H.~Nasu.
\newblock Obstructions to deforming curves on a 3-fold, {III}: {D}eformations
  of curves lying on a {$K3$} surface.
\newblock {\em Internat. J. Math.}, 28(13):1750099, 30, 2017.

\bibitem{Nasu6}
H.~Nasu.
\newblock Obstructions to deforming curves on a prime {F}ano 3-fold.
\newblock {\em Math. Nachr.}, 292(8):1777--1790, 2019.

\bibitem{Nasu7}
H.~Nasu.
\newblock Corrigendum to ``{O}bstructions to deforming curves on a 3-fold,
  {III}: {D}eformations of curves lying on a {$K3$} surface''.
\newblock {\em Internat. J. Math.}, 31(12):2092001, 6, 2020.

\bibitem{Nasu8}
H.~Nasu.
\newblock Obstructions to deforming curves on an {E}nriques-{F}ano $3$-fold.
\newblock {\em J. Pure Appl. Algebra}, 225(9):106677, 15, 2021.

\bibitem{Nasu9}
H.~Nasu.
\newblock Obstructions to deforming space curves lying on a smooth cubic
  surface.
\newblock {\em Manuscripta Math.}, 172(1-2):31--55, 2023.

\bibitem{Nasu10}
H.~Nasu.
\newblock Obstructions to deforming space curves lying on a del {P}ezzo
  surface.
\newblock {\em Manuscripta Math.}, 177(3):Paper No. 33, 30, 2026.

\bibitem{Papadakis-Reid04}
S.~A. Papadakis and M.~Reid.
\newblock Kustin-{M}iller unprojection without complexes.
\newblock {\em J. Algebraic Geom.}, 13(3):563--577, 2004.

\bibitem{Perrin08}
D.~Perrin.
\newblock {\em Algebraic geometry}.
\newblock Universitext. Springer-Verlag London, Ltd., London; EDP Sciences, Les
  Ulis, 2008.
\newblock An introduction, Translated from the 1995 French original by Catriona
  Maclean.

\bibitem{Reid00}
M.~Reid.
\newblock Graded rings and birational geometry.
\newblock In K.~Ohno, editor, {\em Algebraic Geometry Symposium}, Departmental
  Bulletin Paper, pages 1--72, 2000.

\bibitem{Sernesi}
E.~Sernesi.
\newblock {\em Deformations of algebraic schemes}, volume 334 of {\em
  Grundlehren der Mathematischen Wissenschaften [Fundamental Principles of
  Mathematical Sciences]}.
\newblock Springer-Verlag, Berlin, 2006.

\end{thebibliography}
\bibliographystyle{abbrv}

\end{document}